\newcommand{\toukou}[1]{\ifx\TOUKOU\undefined\else{#1}\fi}%
\newcommand{\toukoudel}[1]{\ifx\TOUKOU\undefined{#1}\else\fi}%
\newcommand{\toukouchange}[2]{\ifx\TOUKOU\undefined{#1}\else{#2}\fi}%
 \newtheorem{theorem}{Theorem}[section]
 \newtheorem{lemma}[theorem]{Lemma}
\theoremstyle{definition}
 \newtheorem{definition}[theorem]{Definition}
 \newtheorem{fact}[theorem]{Fact}
\numberwithin{equation}{section}
\renewcommand{\theenumi}{{\rm(\arabic{enumi})}}
\newcommand{\R}{\boldsymbol{R}}
\newcommand{\rank}{\operatorname{rank}}
\renewcommand{\phi}{\varphi}
\newcommand{\hess}{\operatorname{Hess}}
\newcommand{\A}{\mathcal{A}}
\newcommand{\pmt}[1]{{\begin{pmatrix} #1  \end{pmatrix}}}
\newcommand{\mycomment}[1]{}
\renewcommand{\tilde}{\widetilde}
\newcommand{\coef}{\operatorname{coef}}
\newcommand{\chartonoff}[1]{#1}
\begin{document}

\title{Criteria for codimension two singularities
of surfaces and their applications}

\author{Kentaro Saji and Runa Shimada}
\toukou{
\address{
Department of Mathematics,
Graduate School of Science,
Kobe University, 
1-1, Rokkodai, Nada, Kobe 
657-8501, Japan,
E-mails: {\tt saji@math.kobe-u.ac.jp},
{\tt 231s010s@stu.kobe-u.ac.jp}
}
}
\maketitle
\begin{abstract}
We give simple criteria for the singularities 
 appearing on surfaces codimension less than or equal to two.
As applications, we give 
conditions for codimension two
singularities that appear in ruled surfaces and center maps of surfaces in the Euclidean space.
\end{abstract}
\toukouchange
{
\renewcommand{\thefootnote}{\fnsymbol{footnote}}
\footnote[0]{ 2020 Mathematics Subject classification. Primary
57R45; Secondary 53A05.}
\footnote[0]{Keywords and Phrases. Criteria for singularities, surfaces, ruled surfaces, center maps}
}
{
\keywords{Criteria for singularities, surfaces, ruled surfaces, center maps}
\ccode{Mathematics Subject Classification 2020: 57R45, 53A05}
}

\section{Introduction}
Singularities of smooth map-germs have been studied by many authors,
up to the
equivalence under coordinate changes
in both source and target. This equivalence relation is 
called the $\A$-equivalence.
The simple singularities appearing on surfaces in the three space
are classified with respect to this equivalence in \cite{mond}.
\begin{fact}{\rm \cite[Theorem 1.1]{mond}}\label{fact:mond}
A map-germ\/ $f:(\R^2,0)\to(\R^3,0)$ has a singular point at\/ $0$, and
the codimension\/ $($codim\/$)$ is less than two, then\/
$f$ is\/ $\A$-equivalent to one of the following.
\begin{table}[h!]
\centering
\begin{tabular}{ccc}
\hline
Germ&codim&name\\
\hline
$(u,v^2,uv)$&0&Whitney umbrella $(S_0)$\\
$(u,v^2,v(\pm u^2+v^2))$&1& $S_1^\pm$\\
$(u,v^2,v(u^3+v^2))$&2& $S_2$\\
$(u,v^2,v(u^2\pm v^4))$&2& $B_2^\pm$\\
$(u,uv+v^5,v^3)$&2& $H_2$\\
\hline
\end{tabular}
\end{table}
\end{fact}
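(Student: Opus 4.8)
The statement is Mond's classification, so I would follow the strategy of \cite{mond}, whose engine is jet classification combined with the complete transversal method and Mather's finite-determinacy theorem. The first move is to split on $r:=\corank df_0\in\{1,2\}$ and to dispose of $r=2$ immediately: the $2$-jet of a corank-two germ is a triple of binary quadratic forms, and a direct computation at the level of $2$-jets shows every such germ has $\A_e$-codimension at least $3$, so only $r=1$ survives.

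For $r=1$ I would first reach the prenormal form $(u,f_2,f_3)$ with $f_2,f_3\in\mathfrak{m}^2$ by linear changes, absorb the $u^2$-terms of the quadratic parts into the first coordinate so those parts become $v\cdot(\text{linear})$, and classify the resulting pencil of linear forms up to $\A^2$. Only three cases can have codimension $\le 2$: (i) the pencil has rank $2$, so $j^2f\sim_{\A^2}(u,v^2,uv)$, which is $2$-determined and is the cross-cap $S_0$ (codim $0$); (ii) rank $1$ with $j^2f\sim(u,v^2,0)$; (iii) rank $1$ with $j^2f\sim(u,uv,0)$. A vanishing pencil forces codimension $>2$.

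In case (ii) I would use the Morse Lemma with the parameter $u$ to normalise the second coordinate to $v^2$ exactly, split $f_3$ into its parts even and odd in $v$, delete the even part by a target change, and reach $(u,v^2,v\,B(u,v^2))$ with $B\in\mathfrak{m}$ (otherwise $f$ is immersive). If $\partial B/\partial u(0)\ne 0$ this is again $S_0$; otherwise the $3$-jet forks into an $S$-branch and a $B$-branch, and a complete transversal at each jet level together with a determinacy estimate $\mathfrak{m}^{N+1}\theta(f)\subset T\A f$ yields the normal forms $S_k^{\pm}=(u,v^2,v(\pm u^{k+1}+v^2))$ and $B_k^{\pm}=(u,v^2,v(u^2\pm v^{2k}))$, with signs normalised via $u\mapsto -u$, $v\mapsto\lambda v$ and target rescalings; both families have codimension $k$, and $B_1^{\pm}$ is $\A$-equivalent to $S_1^{\pm}$, leaving only $S_1^{\pm}$, $S_2$, $B_2^{\pm}$ with codimension $\le 2$. (The finiteness hypothesis is indispensable here: a third coordinate equal to $v^3$ times a unit is a cuspidal edge, of infinite $\A_e$-codimension.) In case (iii), from $(u,uv+\mathfrak{m}^3,f_3)$ with $f_3\in\mathfrak{m}^3$, I would discard the removable cubic terms, note that finiteness forces the $v^3$-coefficient of $f_3$ to be nonzero, normalise $f_3$ to $v^3$, and run a complete transversal on the second coordinate to get $H_k=(u,uv+v^{3k-1},v^3)$ of codimension $k$; since $H_1$ is $\A$-equivalent to $S_1^{+}$ (complete the square in the second coordinate), only $H_2$ remains with codimension $\le 2$. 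Assembling (i)--(iii) gives exactly $S_0,S_1^{\pm},S_2,B_2^{\pm},H_2$.

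The main obstacle is the bookkeeping inside the complete-transversal steps of (ii) and (iii): at each jet level one must select the right vector fields from $T\A_1 f$ to integrate, track precisely which higher-order monomials survive, and then certify exhaustiveness by computing $\dim_{\R}\theta(f)/T\A_e f$ and checking that it equals the claimed codimension --- this last point being what guarantees nothing of codimension $\le 2$ was missed and nothing spurious added. Those computations are routine but long, and they carry the weight of the argument; the rest is organisation.
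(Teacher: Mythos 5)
This statement is quoted verbatim from Mond's classification (\cite[Theorem~1.1]{mond}); the paper offers no proof of it, so there is nothing internal to compare against, and your outline is in substance a faithful summary of Mond's original argument (corank splitting, $2$-jet pencil classification into $(u,v^2,uv)$, $(u,v^2,0)$, $(u,uv,0)$, $(u,0,0)$, then complete transversals plus finite determinacy along the $S$/$B$ and $H$ branches, with the identifications $B_1^{\pm}\sim S_1^{\pm}$ and $H_1\sim S_1^{+}$). One small correction in case (iii): it is not finite determinacy that forces the $v^3$-coefficient of $f_3$ to be nonzero --- Mond's $P_3=(u,uv+v^3,uv^2+cv^4)$ is finitely determined with vanishing $v^3$-coefficient in the third component --- but rather the restriction to codimension at most two (since $P_3$ has codimension $3$), which your final exhaustiveness check via $\dim_{\R}\theta(f)/T\A_e f$ would in any case enforce.
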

\noindent 
In \cite[Theorem 1.1]{mond},
a classification of simple singularities is given,
however we reproduce here only singularities with codimension up to two,
which are dealt with this paper.
The $S_1^\pm$ singularities are also called the
{\it Chen-Matumoto-Mond\/ $\pm$ singularities}, see \cite{chenmatumoto}.

On the other hand, the question which germ on the classification table
is a given germ equivalent to? is another problem,
which is called recognition.
In this paper, we give simple criteria for the singularities 
up to codimension two for recognition.
In the previous method used for recognition is
first normalize the given germ and then its jet is studied. 
The criteria of singularities without using
normalization are not only more convenient
but also indispensable in some cases,
where we will describe the conditions for such singularities
in ruled surfaces and center maps.
See \cite{KRSUY,sajipl,ak} and \cite{irrtbook,usybook} for
criteria for other several singularities
and its applications.
As applications of the criteria, we give 
conditions for codimension less than or equal to two
singularities that appear in  ruled surfaces and center maps of surfaces in the Euclidean space.

\section{Criteria}
Let $f,g:(\R^2,0)\to(\R^3,0)$ be two $C^\infty$ map-germs. Then
$f$ and $g$ are said to be 
$\A$-{\it equivalent\/} if there exist diffeomorphism-germs
$\phi_s:(\R^2,0)\to(\R^2,0)$ and $\phi_t:(\R^3,0)\to(\R^3,0)$
such that
$\phi_t\circ f\circ \phi_s^{-1}=g$ holds.
A map-germ $f:(\R^2,0)\to(\R^3,0)$ is an {\it $S_2$ singularity\/} if
it is $\A$-equivalent to the germ $(u,v)\mapsto(u,v^2,v(u^3+v^2))$
at the origin $0=(0,0)$.
We define other singularities that appear in Fact \ref{fact:mond} 
in this manner by the formulas and the names given in the table.

Throughout this paper, we assume
$f:(\R^2,0)\to(\R^3,0)$ is a $C^\infty$-map satisfying
$\rank df_0=1$.
Then there exists a pair of linearly independent vector fields
$(\xi,\eta)$
such that $\eta$ generates $\ker df_0$.
A pair of vector fields $(\xi,\eta)$ is said to be {\it adapted\/}
if 
they are linearly independent and $\eta$ generates $\ker df_0$.
A coordinate system is said to be {\it adapted\/}
if $(\partial_u,\partial_v)$ is adapted.
The adaptivity is defined for a pair of vector fields
$(\xi,\eta)$, however we omit the word ``a pair of'' for
short.
We show the following:
\renewcommand{\theenumi}{$(\mathrm{\Roman{enumi}})$}
\begin{theorem}\label{thm:cri}
Let\/ $f:(\R^2,0)\to(\R^3,0)$ satisfy\/
$\rank df_0=1$.
Then\/ $f$ is 
\begin{enumerate}
\item\label{itm:cri1} an\/ $S_2$ singularity if and only if\/
$f$ is of\/ $S$-type, and for an\/ $S$-$3$-adapted vector fields\/
$(\xi,\eta)$, it holds that
\begin{equation}\label{eq:cri1}
\det(\xi f,\xi^3\eta f,\eta^2f)(0)\ne0.
\end{equation}
\item\label{itm:cri2}
a\/ $B_2^\pm$ singularity if and only if\/
$f$ is of\/ $B$-type, and for a\/ $B$-$3$-adapted vector fields\/
$(\xi,\eta)$, it holds that
\begin{equation}\label{eq:cri2}
-5\det(\xi f,\eta^2 f,\eta^3\xi f)(0)^2
+
3\det(\xi f,\eta^2 f,\eta\xi^2f)(0)
 \det(\xi f,\eta^2f,\eta^5f)(0)\ne0.
\end{equation}
Moreover, the\/ $\pm$-sign of\/ the $B_2^\pm$ singularity 
is determined by the sign of\/ \eqref{eq:cri2}.
\item\label{itm:cri3}
an\/ $H_2$ singularity if and only if\/
$f$ is of\/ $H$-type, and for an\/ $H$-$4$-adapted vector fields\/
$(\xi,\eta)$, it holds that
\begin{equation}\label{eq:cri3}
\det(\xi f,\eta^5f,\eta^3f)(0)\ne0.
\end{equation}
\end{enumerate}
\end{theorem}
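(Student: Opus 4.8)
The plan is to prove each of \ref{itm:cri1}--\ref{itm:cri3} by first bringing $f$ to a one–parameter partial normal form and then showing that the displayed determinant, evaluated on an arbitrary admissible frame, vanishes exactly when the deciding parameter does. For \ref{itm:cri1}, the hypothesis that $f$ is of $S$-type is what reduces $f$, by source and target diffeomorphisms, to $(u,v^{2},v(c\,u^{3}+v^{2}))$ modulo terms of order higher than the $\A$-determinacy order of $S_{2}$, and it is also what guarantees that an $S$-$3$-adapted frame exists. For \ref{itm:cri2} and \ref{itm:cri3} the analogous reductions give $(u,v^{2},u^{2}v+c\,v^{5})$ and $(u,uv+c\,v^{5},v^{3})$; here I would fix the normal-form coordinates so that, for the standard frame $(\partial_{u},\partial_{v})$, the ``lower-order'' piece of \eqref{eq:cri2}, namely $A:=\det(\xi f,\eta^{2}f,\eta^{3}\xi f)(0)$, already vanishes. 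Since the model germs of Fact~\ref{fact:mond} are finitely $\A$-determined (being simple), once $c\ne 0$ one matches jets and obtains $\A$-equivalence to $S_{2}$, to $B_{2}^{\sign c}$, or to $H_{2}$; and $c=0$ puts $f$ into a strictly more degenerate stratum. So $f$ is the claimed germ if and only if $c\ne 0$.

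The technical core is to show that the left-hand sides of \eqref{eq:cri1}--\eqref{eq:cri3} are well defined, i.e. independent of the choice of admissible frame and of the $\A$-representative, up to a nonzero factor that is positive in case \ref{itm:cri2}. For frame independence I would take a competing frame $\tilde\xi=a\xi+b\eta$, $\tilde\eta=c'\xi+d\eta$ with $ad-bc'\ne 0$, subject to the constraints making it again $S$-$3$- (resp. $B$-$3$-, $H$-$4$-) adapted — in particular $\tilde\eta$ must again span $\ker df_{0}$ — expand $\det(\tilde\xi f,\tilde\xi^{3}\tilde\eta f,\tilde\eta^{2}f)(0)$ and the other two expressions by multilinearity, and verify, using the defining normalization conditions, that every cross term either drops out or recombines into a scalar multiple of the original. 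The coefficients $-5$ and $3$ in \eqref{eq:cri2} are exactly the ones for which this recombination succeeds and for which the resulting scalar is a perfect square (hence positive); deriving them is therefore part of this step. Invariance under a target diffeomorphism $\Phi$ then follows because each expression is a $3\times 3$ determinant of vectors in $\R^{3}$: the leading effect is multiplication by $\det d\Phi_{0}$, and the contributions of the higher jet of $\Phi$ are absorbed into the frame-change analysis, while invariance under a source diffeomorphism is already contained in that analysis, since a source diffeomorphism carries $(\xi,\eta)$ to another admissible frame.

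Granting well-definedness, the ``only if'' directions reduce to a short computation on the models. If $f$ is an $S_{2}$ singularity it is $\A$-equivalent to $(u,v^{2},v(u^{3}+v^{2}))$, which is visibly of $S$-type, and on this germ $\eqref{eq:cri1}=\det\bigl((1,0,0),(0,0,6),(0,2,0)\bigr)=-12\ne 0$. On the $B_{2}^{\pm}$ model one gets $\eqref{eq:cri2}=-5\cdot 0+3\cdot 4\cdot(\pm 240)=\pm 2880$, which at the same time proves the sign assertion, and on the $H_{2}$ model $\eqref{eq:cri3}=720\ne 0$. Combined with the ``if'' directions above, this gives the three equivalences, the $\pm$ of $B_{2}^{\pm}$ being read off from $\sign c$, equivalently from the sign of \eqref{eq:cri2}.

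I expect the main obstacle to be the frame-independence computation in case \ref{itm:cri2}, performed simultaneously with the bookkeeping of Taylor coefficients: one must check both that the expression in \eqref{eq:cri2} scales by a perfect square under every $B$-$3$-adapted frame change and that, in the standard frame of the normal form where its term $-5A^{2}$ vanishes, its sign equals the $\pm$ of the model. A secondary difficulty is arranging the coordinate reductions so that the successive changes needed to reach the partial normal form do not spoil already-normalized coefficients, and pinning down the precise orders (the finite $\A$-determinacy orders of $S_{2}$, $B_{2}^{\pm}$, $H_{2}$) to which the reduction must be pushed.
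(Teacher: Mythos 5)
Your plan coincides with the paper's own proof in all essentials: establish that each determinant is invariant, up to a nonzero (and in case \ref{itm:cri2} positive) factor, under changes of adapted frame and under target diffeomorphisms; obtain the ``only if'' direction by evaluating on the Mond normal forms (your values $-12$, $\pm2880$, $720$ are correct); and obtain the ``if'' direction by reducing to a partial normal form and invoking finite determinacy, which is exactly what the paper does via the jet-coefficient criteria of Lemma \ref{lem:jetcondskbk} and Lemma \ref{lem:jetcondh2}. The only place you are slightly optimistic is in saying the higher jet of a target diffeomorphism is ``absorbed into the frame-change analysis''---it in fact requires a separate computation using the adaptedness hypotheses (the paper's Lemma \ref{lem:target})---but you correctly identify this as a step to be verified, so the outline stands.
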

Here, the directional derivative of a map $f$ by a vector field $\zeta$
is denoted by $\zeta f$, and $n$ times derivatives by $\zeta$
is denoted by $\zeta^n f$.
\begin{lemma}
Let\/ $(u,v)$ is an adapted coordinate system. 
Then another coordinate system\/ $(x,y)=(x(u,v),y(u,v))$ is adapted
if and only if\/ $x_v(0)=0$.
Let\/ $(\xi,\eta)$ is an adapted vector fields.
Then another pair of vector fields\/ $(\tilde \xi,\tilde \eta)$, where\/
$\tilde \xi=a\xi+b\eta$,
$\tilde \eta=c\xi+d\eta$
is adapted if and only if\/
$c(0)=0$. 
The existence of an adapted vector fields implies
the existence of an adapted coordinate system.
\end{lemma}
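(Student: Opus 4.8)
The plan is to dispose of the three assertions separately, each reducing to a short linear-algebra computation once one recalls that being \emph{adapted} for a pair $(\zeta_1,\zeta_2)$ means that $\zeta_1(0),\zeta_2(0)$ are linearly independent and that $\zeta_2(0)$ spans the line $\ker df_0$ (which has dimension $2-\rank df_0=1$). For the first assertion, since $(u,v)$ is adapted we have $\ker df_0=\langle\partial_v(0)\rangle$, so $(x,y)$ is adapted if and only if $\partial_y(0)$ lies in this line, that is, its $\partial_u$-component vanishes at $0$ (linear independence of $\partial_x,\partial_y$ being automatic, as is $\partial_y(0)\ne0$). Writing $\partial_y=u_y\,\partial_u+v_y\,\partial_v$ and using that the Jacobian matrix of $(u,v)$ in the coordinates $(x,y)$ is $\pmt{x_u & x_v\\ y_u& y_v}^{-1}$, one gets $u_y=-x_v/(x_uy_v-x_vy_u)$; since the denominator is nonzero at $0$, this component vanishes at $0$ precisely when $x_v(0)=0$, which is the claim.

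For the second assertion, apply $df_0$ to $\tilde\eta(0)=c(0)\xi(0)+d(0)\eta(0)$. Since $\eta(0)$ spans $\ker df_0$ this gives $df_0(\tilde\eta(0))=c(0)\,df_0(\xi(0))$, and $df_0(\xi(0))\ne0$ because $\rank df_0=1$ with kernel $\langle\eta(0)\rangle$ while $\xi(0)\notin\langle\eta(0)\rangle$. Hence $\tilde\eta(0)\in\ker df_0$ if and only if $c(0)=0$. Under the nondegeneracy implicit in speaking of the pair $(\tilde\xi,\tilde\eta)$---namely that $ad-bc$ does not vanish at $0$, which when $c(0)=0$ forces $a(0)d(0)\ne0$ and in particular $\tilde\eta(0)\ne0$---this is exactly the condition that $(\tilde\xi,\tilde\eta)$ be adapted.

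For the third assertion, let $(\xi,\eta)$ be an adapted pair of vector fields. Then $\eta$ is nonvanishing near $0$ (it spans the one-dimensional space $\ker df_0$, so $\eta(0)\ne0$), hence by the flow-box theorem there is a coordinate system $(x,y)$ at $0$ with $\eta=\partial_y$. Then $\partial_y(0)=\eta(0)$ spans $\ker df_0$ and $(\partial_x,\partial_y)$ is linearly independent, so $(x,y)$ is adapted. One can also argue without flows: starting from an arbitrary coordinate system, a suitable constant linear change of the two coordinates moves the second coordinate axis onto the line $\ker df_0$.

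None of the three steps presents a real obstacle; the only points requiring care are the inversion of the Jacobian in the first assertion and, in the second, remembering that ``adapted'' also entails linear independence of the pair---which is precisely why the stated equivalence there needs the implicit hypothesis $ad-bc\ne0$ at $0$.
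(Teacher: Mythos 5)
Your proof is correct and amounts to a careful elaboration of what the paper dismisses in one line (the first two claims are declared ``obvious'' and the third is handled by noting that an adapted vector field pair forces $\rank df_0=1$, whence an adapted coordinate system exists by a linear change of coordinates — your second, flow-free argument). The inverse-Jacobian computation, the application of $df_0$ to $\tilde\eta(0)$, and your explicit remark that the equivalence in the second claim presupposes $ad-bc\ne0$ at $0$ are all accurate and consistent with the paper's conventions.
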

\begin{proof}
We show the last claim, where the other claims are obvious.
The existence of an adapted vector fields implies
$\rank df_0=1$, and this property implies
the existence of an adapted coordinate system.
\end{proof}
Let $(\xi,\eta)$ be an adapted vector fields.
The condition
$\xi f\times \eta\eta f\ne0$ at $0$ does not depend on the choice of
adapted vector fields and the diffeomorphism on the target space.
Moreover, 
the condition $\xi f\times \eta\eta f=0$ and
$\xi f\times \eta\xi f\ne0$ at $0$
does not depend on the choice of
adapted vector fields and the diffeomorphism on the target space.
\begin{definition}
A map-germ $f:(\R^2,0)\to(\R^3,0)$ is said to be {\it $SB$-type\/}
if $\xi f\times \eta\eta f\ne0$ at $0$ holds.
A map-germ $f$ is said to be {\it $HP$-type\/}
if $\xi f\times \eta\eta f=0$ and
$\xi f\times \eta\xi f\ne0$ at $0$ 
holds.
\end{definition}
Germs whose two jets are $\A$-equivalent to
$(u,uv,0)$ may be $\A$-equivalent to $H$ or $P$ singularities
and others in the classification table in \cite[Table 1]{mond}.
The condition
$\det(\xi f,\eta^2f,\xi\eta f)\ne0$ at $0$ 
does not depend on the choice of
adapted vector fields and the diffeomorphism on the target space.
If this condition holds, then $f$ is the Whitney umbrella.
We set $\phi(\xi,\eta)=\det(\xi f,\eta f,\eta^2f)$.
We write $\phi=\phi(\xi,\eta)$ if the vector fields
$(\xi,\eta)$ what we are taking are obvious.
Then the above condition is equivalent to $d\phi_0\ne0$,
since $\eta\phi(0)=0$ holds.
The map-germ $f$ is an $S_1^-$ singularity if and only if
$d\phi_0=0$, $\det\hess\phi(0)<0$ and $\eta^2 \phi(0)\ne0$,
and
$f$ is an $S_1^+$ singularity if and only if
$d\phi_0=0$, $\det\hess\phi(0)>0$.
We remark that in the both cases,
$\eta^2 \phi(0)\ne0$ holds.
The two jet $j^2f(0)$ of a map-germ $f$ satisfying $\rank df_0=1$
is $\A$-equivalent to one of
$(u,v^2,uv)$,
$(u,v^2,0)$,
$(u,uv,0)$ or
$(u,0,0)$ up to two jet.
If $f$ is the Whitney umbrella, then it is equivalent to the first one,
if $f$ is $SB$-type, then it does the second one,
if $f$ is $HP$-type, then it does the third one.
If $f$ is neither $SB$-type nor $HP$-type, then 
it does the fourth one.
The fourth one is not dealt with this paper.
\subsection{Preliminaries for brunch to\/ $S_2$ or\/ $B_2$ singularities}
We assume that the map $f$ is $SB$-type and
not a Whitney umbrella. 
Then 
$\xi f\times \eta^2 f\ne0$ and
$\det(\xi f,\eta^2f,\xi\eta f)(0)=0$ at $0$
hold.

\begin{lemma}\label{lem:vfexist01}
Let $f$ be $SB$-type and
not a Whitney umbrella. 
$(1)$
there exists an adapted coordinate system\/ $(u,v)$ such that\/
$f_{uv}(0)=0$ holds.
$(2)$
there exists an adapted vector fields\/ $(\xi,\eta)$ such that\/
$\xi\eta f(0)=\eta\xi f(0)=0$ holds.
\end{lemma}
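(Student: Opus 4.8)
The plan is to prove (1) first, then deduce (2) essentially for free. Start with an arbitrary adapted coordinate system $(u,v)$, which exists by the previous lemma. Since $f$ is $SB$-type and not a Whitney umbrella, we have $f_u\times f_{vv}\neq0$ and $\det(f_u,f_{vv},f_{uv})(0)=0$ at $0$; also $df_0$ has rank one with $\ker df_0=\langle\partial_v\rangle$, so $f_v(0)=\zv$. The vectors $f_u(0)$ and $f_{vv}(0)$ are linearly independent, so $\{f_u(0),f_{vv}(0)\}$ spans a plane, and the vanishing of the determinant says precisely that $f_{uv}(0)\in\operatorname{span}\{f_u(0),f_{vv}(0)\}$. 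Write $f_{uv}(0)=\alpha f_u(0)+\beta f_{vv}(0)$ for some constants $\alpha,\beta\in\R$.

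Next I would look for a coordinate change that kills $f_{uv}(0)$ while preserving adaptivity. By the previous lemma, a change $(u,v)\mapsto(x,y)$ is adapted iff $x_v(0)=0$; so I try a change whose inverse is $u=x+(\text{quadratic}),\ v=y+(\text{quadratic})$ with the quadratic part in $u$ chosen to have no $v^2$ term needed and the $v$-coordinate adjusted to absorb $\beta$. Concretely, compose $f$ with a source diffeomorphism of the form $(u,v)\mapsto(u,v)+\tfrac12(\,p\,v^2,\ 2\alpha uv + q v^2\,)$-type corrections — more cleanly, set new coordinates so that the new map $\tilde f$ satisfies $\tilde f_{uv}(0)=f_{uv}(0)-\alpha f_u(0)-\beta f_{vv}(0)$, which forces a shift in the mixed second derivative by exactly $-\alpha f_u(0)$ (from reparametrizing $v\mapsto v+\alpha u v$, which changes $\partial_u$ by $\alpha v\,\partial_v$ to leading order and hence $f_{uv}$ by $\alpha f_{vv}$... — here one must be careful with signs) and by $-\beta f_{vv}(0)$ (from $v\mapsto v + \tfrac{\beta}{?}\, v^2$-type terms interacting with the fact that $f_v(0)=\zv$, so such a change affects $f_{vv}$ only at higher order and instead one uses the $u$ shift). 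The honest computation is a short second-order Taylor expansion of $\tilde f(x,y)=f(\psi(x,y))$ with $\psi$ a quadratic perturbation of the identity, matching coefficients so that the coefficient of $xy$ in $j^2\tilde f$ vanishes; adaptivity is automatic because the $v$-component of the linear part of $\psi$ is the identity and the perturbation is quadratic, so $\psi$ still has $x_v(0)=0$. This establishes (1).

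For (2), take the coordinate system $(u,v)$ from (1) and set $\xi=\partial_u$, $\eta=\partial_v$; then $\xi\eta f(0)=f_{uv}(0)=\zv$ and $\eta\xi f(0)=f_{vu}(0)=\zv$ by equality of mixed partials, so $(\xi,\eta)$ is the desired adapted vector fields. One should note that this pair is adapted since $\eta=\partial_v$ generates $\ker df_0$ by construction of the adapted coordinate system. I expect the main obstacle to be bookkeeping in the Taylor expansion of the composition in part (1): one must verify that the three free quadratic coefficients of the source reparametrization suffice to cancel the (at most two-dimensional worth of) obstruction $\alpha f_u(0)+\beta f_{vv}(0)$ while respecting the constraint $x_v(0)=0$, using crucially that $f_v(0)=\zv$ so that a $v^2$-term in the $u$-component of $\psi$ contributes to $\tilde f_{uv}$ at the needed order. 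Everything else is linear algebra in the two-plane $\operatorname{span}\{f_u(0),f_{vv}(0)\}$.
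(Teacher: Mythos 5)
Your overall strategy for (1) --- write $f_{uv}(0)=\alpha f_u(0)+\beta f_{vv}(0)$ using $SB$-type plus the vanishing determinant, then kill $f_{uv}(0)$ by a source coordinate change preserving adaptivity --- is the same as the paper's, and your reduction of (2) to (1) via $(\xi,\eta)=(\partial_u,\partial_v)$ is legitimate (the paper itself notes that (1) implies (2); it only gives a separate vector-field construction because it wants an explicit formula built from an arbitrary adapted pair, which it reuses later). However, there is a genuine gap in your execution of (1): you insist that the reparametrization $\psi$ be a \emph{quadratic} perturbation of the identity, and a quadratic perturbation cannot cancel the $\beta f_{vv}(0)$ component. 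Indeed, writing $\tilde f(u,v)=f(x(u,v),y(u,v))$ with $x_v(0)=0$ and using $f_y(0)=\zv$, one gets at $0$
\begin{equation*}
\tilde f_{uv}=f_x\bigl(\alpha\,x_uy_v+x_{uv}\bigr)+f_{yy}\bigl(\beta\,x_uy_v+y_uy_v\bigr),
\end{equation*}
so the only handles are $x_{uv}(0)$ (which adjusts the $f_x$-component) and $y_u(0)$ (which adjusts the $f_{yy}$-component). All quadratic coefficients of the $y$-component are multiplied by $f_y(0)=\zv$ and do nothing, and --- contrary to your claim --- a $v^2$-term in the $u$-component contributes to $\tilde f_{vv}(0)$, not to $\tilde f_{uv}(0)$. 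Hence when $\beta\ne0$ you are forced to use a \emph{linear} shear $y=v-\beta u$ (which still preserves adaptivity, since adaptivity only constrains $x_v(0)$); this is exactly the paper's change $x=u-\alpha uv$, $y=-\beta u+v$, giving $x_{uv}(0)=-\alpha$ and $y_u(0)=-\beta$.

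Your own hedging (``one must be careful with signs,'' the unresolved question mark in $v\mapsto v+\tfrac{\beta}{?}v^2$) signals that the computation was not actually carried out; the heuristic $v\mapsto v+\alpha uv$ you mention changes $\tilde f_{uv}(0)$ only by a multiple of $f_y(0)=\zv$, so it does nothing. Once you replace ``three free quadratic coefficients'' by ``the coefficient $x_{uv}(0)$ together with the linear shear coefficient $y_u(0)$,'' the argument closes and the rest of your write-up (including part (2)) is correct.
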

The assertion $(1)$ implies $(2)$.
However, we give proof for each assertion since 
we would like to give a concrete construction
for desired vector fields.
\begin{proof}
(1) Let $(x,y)$ be an adapted coordinate system.
The condition
$\xi f\times \eta^2 f\ne0$ and
$\det(\xi f,\eta^2,\xi\eta f)(0)=0$ at $0$
does not depend on the choice of adapted vector fields,
the same condition holds for the vector fields
$(\partial_x,\partial_y)$.
Thus we set
$f_{xy}=\alpha f_x+\beta f_{yy}$ at $0$.
By a coordinate change
$x=u-\alpha uv$, 
$y=-\beta u+v$, we see
$$
f_{uv}=-\alpha f_x-\beta f_{yy}+f_{xy}\quad\text{at}\quad0.
$$
Substituting $f_{xy}=\alpha f_x+\beta f_{yy}$, we have the assertion.
(2) Let $(\xi,\eta)$ be an adapted vector fields.
We take a coordinate system $(u,v)$ satisfying
$\xi=\partial_u,\eta=\partial_v$ at $0$.
Then $(\xi,\eta)$ is an adapted vector fields,
and the condition
$\xi f\times \eta^2 f\ne0$,
$\det(\xi f,\eta^2,\xi\eta f)(0)=0$ at $0$
does not depend on the choice of adapted vector fields,
it holds that
$f_u\times f_{vv}\ne0$,
$\det(f_u,f_{vv},f_{uv})(0)=0$ at $0$.
We set
\begin{equation}\label{eq:sbadvf}
\tilde\xi=a\partial_u+b\partial_v,\quad
\tilde\eta=c\partial_u+d\partial_v,
\end{equation}
where
\begin{equation}\label{eq:sbadvf2}
a=1-\alpha v,\quad
b=-\beta,\quad
c=-\alpha u,\quad
d=1,
\end{equation}
for 
$
f_{uv}=\alpha f_u+\beta f_{vv}$ at $0$.
Then $(\tilde\xi,\tilde\eta)$ is adapted, and
$
\tilde\xi\tilde\eta f=\tilde\eta\tilde\xi f=0$ at $0$
holds.
\end{proof}

\begin{definition}
An adapted vector fields $(\xi,\eta)$ is said to be
$SB$-$2$-{\it adapted\/} if $\xi\eta f=\eta\xi f=0$ at $0$
holds.
A coordinate system $(u,v)$ is said to be $SB$-$2$-{\it adapted\/} 
if $(\partial_u,\partial_v)$ is $SB$-$2$-adapted.
\end{definition}
\begin{lemma}\label{lem:sb2adcoordex}
The existence of an\/ $SB$-$2$-adapted vector fields\/ $(\xi,\eta)$
implies the existence of an\/ $SB$-$2$-adapted coordinate system\/ $(u,v)$.
\end{lemma}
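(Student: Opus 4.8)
The plan is to recycle the computation from the proof of Lemma \ref{lem:vfexist01}(1), starting this time from the given $SB$-$2$-adapted vector fields. First I would fix a coordinate system $(x,y)$ with $\xi=\partial_x$ and $\eta=\partial_y$ at $0$; such a chart exists by applying a linear change to any initial coordinate system so that the coordinate vectors at $0$ agree with $\xi(0)$ and $\eta(0)$, exactly as in the proof of Lemma \ref{lem:vfexist01}(2). Since $\eta$ generates $\ker df_0$, this chart is adapted and $f_y(0)=0$.

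Next I would read off the shape of $f_{xy}(0)$ from the $SB$-$2$-adaptedness. Writing $\xi=a\partial_x+b\partial_y$ and $\eta=c\partial_x+d\partial_y$ with $a(0)=d(0)=1$ and $b(0)=c(0)=0$, a direct differentiation gives $\xi\eta f(0)=c_x(0)f_x(0)+f_{xy}(0)$, where we have used $f_y(0)=0$; hence $\xi\eta f(0)=0$ forces $f_{xy}(0)=\lambda f_x(0)$ with $\lambda:=-c_x(0)$ (the condition $\eta\xi f(0)=0$ gives the same thing with $\lambda=-a_y(0)$). The point is that $f_{xy}(0)$ has no $f_{yy}(0)$-component, so---unlike the general situation treated in Lemma \ref{lem:vfexist01}(1)---a one-parameter coordinate change suffices. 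Performing $x=u-\lambda uv$, $y=v$, which is a local diffeomorphism since its Jacobian at $0$ is the identity, one computes $f_u=(1-\lambda v)f_x$ and then $f_{uv}=-\lambda f_x+(1-\lambda v)(-\lambda u f_{xx}+f_{xy})$, so that $f_{uv}(0)=-\lambda f_x(0)+f_{xy}(0)=0$. Since $\partial_v=\partial_y$ at $0$, the chart $(u,v)$ is still adapted, and it is therefore $SB$-$2$-adapted.

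I do not expect a genuine obstacle here: the statement is essentially bookkeeping, and---under the standing assumption that $f$ is $SB$-type and not a Whitney umbrella---it also follows immediately from Lemma \ref{lem:vfexist01}(1), because $\xi\eta f(0)=0$ already implies $\det(\xi f,\eta^2f,\xi\eta f)(0)=0$, so that $f$ is not a Whitney umbrella. The only thing requiring a little care is to check simultaneously that the chosen change of coordinates preserves adaptedness and annihilates $f_{uv}(0)$, which the explicit Jacobian computation above settles.
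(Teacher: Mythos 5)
Your argument is correct and follows essentially the paper's route: the paper likewise passes to a coordinate system whose coordinate vectors agree with $(\xi,\eta)$ at the origin, notes that $\det(f_u,f_{vv},f_{uv})(0)=0$ there by invariance, and invokes Lemma \ref{lem:vfexist01}(1) --- exactly the shortcut you record in your final paragraph. Your explicit variant, observing that $SB$-$2$-adaptedness of the vector fields forces $f_{xy}(0)$ to be parallel to $f_x(0)$ (so that $\beta=0$ in the notation of Lemma \ref{lem:vfexist01} and the one-parameter change $x=u-\lambda uv$, $y=v$ suffices), is a correct and slightly sharper instance of the same computation.
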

Multiplying an $SB$-$2$-adapted vector fields $(\xi,\eta)$ 
by a non-zero function, then it is not an $SB$-$2$-adapted vector fields.
Multiplying an $SB$-$2$-adapted vector fields $(\xi,\eta)$ 
by a non-zero function $k$, and taking a coordinate system $(u,v)$ such that
$\partial_u=k\xi$,
$\partial_v=k\eta$ (\cite[Lemma in p182]{kobanomi}),
then it is not $SB$-$2$-adapted.
Thus a proof is needed for this lemma.
\begin{proof}
Let $(\xi,\eta)$ be an $SB$-$2$-adapted vector fields.
Then there exists a coordinate system $(u,v)$ such that
$(\partial_u,\partial_v)=(\xi,\eta)$ holds at $0$.
This is adapted coordinate system.
Since the condition
$\xi f\times \eta\eta f=0$ and
$\det(\xi f,\eta^2f,\xi\eta f)=0$ at $0$ does not depend on
the choice of adapted vector fields,
$\det(f_u,f_{vv},f_{uv})=0$ holds.
Then by Lemma \ref{lem:vfexist01}, the
existence of 
$SB$-$2$-adapted coordinate system follows.
\end{proof}
\begin{lemma}\label{lem:2adaptedcond}
$(1)$ Let\/ $(u,v)$ be an\/ $SB$-$2$-adapted coordinate system.
Then an adapted coordinate system\/ $(x,y)=(x(u,v),y(u,v))$ 
$($i.e., $x_v=0$ holds\/$)$
is\/ $SB$-$2$-adapted if and only if
\begin{equation}\label{eq:2adaptedcond}
x_{uv}(0)=0,\quad y_u(0)=0.
\end{equation}
$(2)$ Let\/ $(\xi,\eta)$ be an\/ $SB$-$2$-adapted vector fields.
Then an adapted vector fields\/ $(\tilde \xi,\tilde \eta)$ 
$($i.e., $c=0$ holds\/$)$, where\/ 
$\tilde \xi=a\xi+b\eta$,
$\tilde \eta=c\xi+d\eta$ is\/ $SB$-$2$-adapted 
if and only if
\begin{equation}\label{eq:2adaptedcond}
\eta a=0,\ b=0,\ \xi c=0\quad\text{at}\quad 0.
\end{equation}
\end{lemma}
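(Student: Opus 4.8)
The plan is to prove assertion $(2)$ directly by the Leibniz rule, and then to deduce assertion $(1)$ from it. For $(2)$, the facts available at the origin are: $\eta f(0)=0$ because $(\xi,\eta)$ is adapted; $\xi\eta f(0)=\eta\xi f(0)=0$ because $(\xi,\eta)$ is $SB$-$2$-adapted; $c(0)=0$ because $(\tilde\xi,\tilde\eta)$ is adapted, whence also $a(0)d(0)\ne0$ by linear independence; and $\xi f(0),\eta^2 f(0)$ are linearly independent because $f$ is $SB$-type. I would then expand
\[
\tilde\eta\tilde\xi f=(c\xi+d\eta)\bigl(a\,\xi f+b\,\eta f\bigr),\qquad
\tilde\xi\tilde\eta f=(a\xi+b\eta)\bigl(c\,\xi f+d\,\eta f\bigr)
\]
by the Leibniz rule and evaluate at $0$; every term carrying a factor $c$, $\eta f$, $\xi\eta f$ or $\eta\xi f$ drops out, leaving
\[
\tilde\eta\tilde\xi f(0)=d(0)\bigl((\eta a)(0)\,\xi f(0)+b(0)\,\eta^2 f(0)\bigr),
\]
\[
\tilde\xi\tilde\eta f(0)=\bigl(a(0)(\xi c)(0)+b(0)(\eta c)(0)\bigr)\xi f(0)+b(0)d(0)\,\eta^2 f(0).
\]
Since $\xi f(0)$ and $\eta^2 f(0)$ are independent and $a(0),d(0)\ne0$, the simultaneous vanishing of these two vectors is equivalent to $(\eta a)(0)=0$, $b(0)=0$ and $(\xi c)(0)=0$, and the converse is read off the same equivalences; this is the asserted condition.

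For $(1)$, I would express the coordinate vector fields of the new system as $\partial_x=p\,\partial_u+q\,\partial_v$ and $\partial_y=r\,\partial_u+s\,\partial_v$, where $p=y_v/\Delta$, $q=-y_u/\Delta$, $r=-x_v/\Delta$, $s=x_u/\Delta$ with $\Delta=x_uy_v-x_vy_u$ are the entries of the inverse of the Jacobian matrix of $(u,v)\mapsto(x,y)$ (regarded, after composition with the coordinate change, as functions of $(u,v)$). Adaptedness of $(x,y)$ is exactly $r(0)=0$, i.e.\ $x_v(0)=0$, and then $\Delta(0)=x_u(0)y_v(0)\ne0$, so $x_u(0)\ne0$ and $y_v(0)\ne0$; hence assertion $(2)$, applied with $(\xi,\eta)=(\partial_u,\partial_v)$ and $(a,b,c,d)=(p,q,r,s)$, shows that $(x,y)$ is $SB$-$2$-adapted if and only if
\[
\partial_v p(0)=0,\qquad q(0)=0,\qquad \partial_u r(0)=0.
\]
It then remains to translate these. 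From $q=-y_u/\Delta$ we get $q(0)=0\iff y_u(0)=0$; using $x_v(0)=0$ we get $\partial_u r(0)=-x_{uv}(0)/\Delta(0)$, so $\partial_u r(0)=0\iff x_{uv}(0)=0$; and differentiating $p=y_v/\Delta$ and using $x_v(0)=0$ a short computation gives
\[
\partial_v p(0)=\frac{y_v(0)}{\Delta(0)^2}\bigl(-y_v(0)\,x_{uv}(0)+x_{vv}(0)\,y_u(0)\bigr),
\]
which vanishes automatically once $x_{uv}(0)=0$ and $y_u(0)=0$. Hence the three conditions of $(2)$ collapse to $x_{uv}(0)=0$ and $y_u(0)=0$, proving $(1)$.

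I expect the only real work to be bookkeeping: keeping the two Leibniz expansions in $(2)$ honest — checking that no surviving term is overlooked and that the dropped terms really vanish via $\eta f(0)=0$, $\xi\eta f(0)=\eta\xi f(0)=0$ and $c(0)=0$ — and carrying out the single differentiation that yields $\partial_v p(0)$ in $(1)$. One could instead prove $(1)$ directly by computing $\partial_x\partial_y f(0)$ with the chain rule (the surviving terms at $0$ are again just multiples of $\xi f(0)=f_u(0)$ and $\eta^2 f(0)=f_{vv}(0)$), but this merely reproduces the same calculation.
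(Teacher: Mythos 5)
Your proof of part $(2)$ is the same computation as the paper's: expand $\tilde\eta\tilde\xi f$ and $\tilde\xi\tilde\eta f$ by the Leibniz rule, kill every term containing $c$, $\eta f$, $\xi\eta f$ or $\eta\xi f$ at the origin, and read off the conditions from the linear independence of $\xi f(0)$ and $\eta^2f(0)$ (which indeed requires the standing $SB$-type hypothesis, as you note) together with $a(0)d(0)\ne0$. For part $(1)$ you diverge slightly: the paper differentiates the composition directly and obtains $f_ux_{uv}+y_uy_vf_{vv}$ at $0$ in one line, whereas you reduce $(1)$ to $(2)$ by expressing $\partial_x,\partial_y$ through the inverse Jacobian and then translating the three vector-field conditions back into $x_{uv}(0)=0$ and $y_u(0)=0$, checking that the condition $\partial_vp(0)=0$ is implied by the other two. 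Both routes are correct and give the same answer; yours is a bit longer but makes explicit why the coordinate version has only two conditions while the vector-field version has three, and it fits the paper's general pattern elsewhere of deriving coordinate statements from vector-field statements.
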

\begin{proof}
(1)By the $SB$-$2$-adaptivity of $(x,y)$ and $f$, it holds that
$x_v=0$, $f_v=0$, $f_{uv}=0$ at $0$.
Thus
$$f(x(u,v),y(u,v))_{uv}=
f_u x_{uv}+y_uy_v f_{vv}\quad \text{at}\quad0$$
holds, and this shows the assertion.
(2) We set
$\tilde\xi=a\xi+b\eta$,
$\tilde\eta=c\xi+d\eta$.
Then the assertion follows from the calculation
\begin{align*}
\tilde\eta\tilde\xi f=&c\xi(\tilde\xi f)
+
d(\eta a \xi f+a \eta\xi f+\eta b \eta f+b\eta^2f),\\
\tilde\xi\tilde\eta f=&
a(\xi c \xi f+c \xi^2 f+\xi d \eta f+d\eta \xi f)
+
b(\eta c \xi f+c\eta\xi f+\eta d \eta f +d \eta^2f).
\end{align*}
\end{proof}

\subsection{Branch to\/ $S_2$ or\/ $B_2$ singularities}
Let $(\xi,\eta)$ be an $SB$-$2$-adapted vector fields.
Then since
$\eta\phi(\xi,\eta)
=
\det(\eta\xi f,\eta f,\eta^2f)
+\det(\xi f,\eta f,\eta^3f)$,
it holds that $\eta^2\phi(\xi,\eta)(0)
=
\det(\xi f,\eta^2f,\eta^3f)(0)$.
\begin{lemma}\label{lem:sb2yyy}
$(1)$ Let\/ $(u,v)$ be an\/ $SB$-$2$-adapted 
coordinate system.
Then the condition\/
$(\phi(\partial_u,\partial_v))_{vv}\ne0$
does not depend on the choice of\/
$SB$-$2$-adapted coordinate system
and diffeomorphism on the target space.
$(2)$ Let\/ $(\xi,\eta)$ be an\/ $SB$-$2$-adapted
vector fields.
Then the condition\/
$\eta^2\phi(\xi,\eta)\ne0$
does not depend on the choice of\/
$SB$-$2$-adapted vector fields 
and diffeomorphism on the target space.
\end{lemma}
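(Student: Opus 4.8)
The plan is to reduce both parts to a single scalar and to track how it transforms. For an $SB$-$2$-adapted vector fields $(\xi,\eta)$ set
\[
Q(\xi,\eta)=\det(\xi f,\eta^2 f,\eta^3 f)(0).
\]
As observed just before the lemma, $Q(\xi,\eta)=\eta^2\phi(\xi,\eta)(0)$, and for an $SB$-$2$-adapted coordinate system $(u,v)$ one has $Q(\partial_u,\partial_v)=(\phi(\partial_u,\partial_v))_{vv}(0)$. So both (1) and (2) say that the non-vanishing of $Q$ is independent of the chosen $SB$-$2$-adapted object and of the target diffeomorphism. Moreover (1) is the case of (2) in which $(\xi,\eta)=(\partial_u,\partial_v)$ is induced by a coordinate system, since any two $SB$-$2$-adapted coordinate systems induce $SB$-$2$-adapted vector fields related as in Lemma \ref{lem:2adaptedcond}(2); hence I would prove (2), and deduce (1) from it (a direct coordinate computation via Lemma \ref{lem:2adaptedcond}(1) being entirely parallel).

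\emph{Invariance under a target diffeomorphism.} Let $\Phi\colon(\R^3,0)\to(\R^3,0)$ be a diffeomorphism-germ, $A=d\Phi_0$, and $g=\Phi\circ f$. Using the chain rule together with $\eta f(0)=0$ and $\xi\eta f(0)=\eta\xi f(0)=0$, the second- and third-order terms of $\Phi$ all acquire a factor vanishing at $0$, while $D\Phi|_f(\xi\eta f)(0)=A(\xi\eta f)(0)=0$; hence $\xi\eta g(0)=\eta\xi g(0)=0$, so $(\xi,\eta)$ stays $SB$-$2$-adapted for $g$ (and $g$ is again of $SB$-type and not a Whitney umbrella, since $A$ is invertible), and
\[
\xi g(0)=A\,\xi f(0),\qquad \eta^2 g(0)=A\,\eta^2 f(0),\qquad \eta^3 g(0)=A\,\eta^3 f(0).
\]
Therefore $Q(g)=(\det A)\,Q(f)$ with $\det A\neq 0$.

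\emph{Invariance under the choice of $SB$-$2$-adapted vector fields.} Let $(\tilde\xi,\tilde\eta)$ be another $SB$-$2$-adapted vector fields, $\tilde\xi=a\xi+b\eta$, $\tilde\eta=c\xi+d\eta$; adaptivity gives $c(0)=0$ and $a(0)d(0)\neq 0$. Expanding by the Leibniz rule and repeatedly using $c(0)=0$, $\eta f(0)=0$, and $\xi\eta f(0)=\eta\xi f(0)=0$, I would show
\begin{align*}
\tilde\xi f(0)&=a(0)\,\xi f(0),\\
\tilde\eta^2 f(0)&=d(0)^2\,\eta^2 f(0)+(\ast)\,\xi f(0),\\
\tilde\eta^3 f(0)&=d(0)^3\,\eta^3 f(0)+(\ast)\,\eta^2 f(0)+(\ast)\,\xi f(0),
\end{align*}
for suitable scalars $(\ast)$. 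Substituting into $\det(\tilde\xi f,\tilde\eta^2 f,\tilde\eta^3 f)(0)$ and discarding the correction terms by multilinearity (each yields a determinant with a repeated column), one gets $Q(\tilde\xi,\tilde\eta)=a(0)\,d(0)^5\,Q(\xi,\eta)$ with $a(0)d(0)^5\neq 0$. Combining this with the previous paragraph proves (2), and hence (1).

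\emph{Main obstacle.} The only genuine computation is the displayed system above: one has to verify that after applying $\tilde\eta=c\xi+d\eta$ the unwanted $\xi$-derivatives occur only with a factor $c$ (vanishing at $0$) or inside expressions such as $\xi\eta f$ or $\eta\xi f$ (vanishing at $0$), so that $\tilde\eta^k f(0)=d(0)^k\eta^k f(0)$ modulo the span of $\xi f(0),\eta^2 f(0),\dots,\eta^{k-1}f(0)$; in particular that no $\eta^3 f(0)$ component enters $\tilde\eta^2 f(0)$, which again reduces to $c(0)=0$. The rest is short chain-rule bookkeeping.
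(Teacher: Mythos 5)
Your proposal is correct and follows essentially the same route as the paper: both reduce to the vector-field case, show $\det(\tilde\xi f,\tilde\eta^2f,\tilde\eta^3f)(0)=a(0)d(0)^5\det(\xi f,\eta^2f,\eta^3f)(0)$ for a change of $SB$-$2$-adapted vector fields, and dispose of target diffeomorphisms by the chain-rule argument (the paper's Lemma \ref{lem:target}). The only cosmetic difference is that the paper anchors its computation in an $SB$-$2$-adapted coordinate system, while you compare two arbitrary adapted pairs directly.
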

\begin{proof}
It is enough to show the case of vector fields.
Since the existence of $SB$-$2$-adapted vector fields implies
the existence of $SB$-$2$-adapted coordinate system
(Lemma \ref{lem:vfexist01}), we assume  $(u,v)$ 
is an $SB$-$2$-adapted coordinate system.
Let $(\xi,\eta)$ be an $SB$-$2$-adapted vector fields, where
$\xi=a\partial_u+b\partial_v$,
$\eta=c\partial_u+d\partial_v$ and
$a_v=0,b=0,c_u=0$, $ad\ne0$.
Then it holds that
$\eta^2\phi(\xi,\eta)=\det(\xi f,\eta^2f,\eta^3f)$ at $0$.
Since
$f_\xi=af_u$,
$f_{\eta^2}=c(c_uf_u+cf_{uu}+d_uf_v+df_{uv})
+d(c_vf_u+cf_{uv}+d_vf_v+df_{vv})$ and
$\eta^3 f=*f_u+*f_{vv}+d^3f_{vvv}$ holds at $0$,
we have $\eta^2\phi(\xi,\eta)=ad^5\det(f_u,f_{vv},f_{vvv})$ at $0$.
Here, the symbol $*$ stands for a term who will not be used
in the later calculations.
For the independence of target diffeomorphisms, 
see Lemma \ref{lem:target}.
\end{proof}
\begin{lemma}\label{lem:sb2xyy}
$(1)$ Let\/ $(u,v)$ be an\/ $SB$-$2$-adapted coordinate system.
Then the condition\/ $(\phi(\partial_u,\partial_v))_{uu}(0)\ne0$
is equivalent to\/
$\det(f_u,f_{uuv},f_{vv})(0)\ne0$, and it does not depend on the
choice of\/ $SB$-$2$-adapted coordinate system and
diffeomorphisms on the targe space.
$(2)$ Let\/ $(\xi,\eta)$ be an\/ $SB$-$2$-adapted vector fields.
Then the condition\/ $\xi^2\phi(\xi,\eta)(0)\ne0$ is equivalent to\/
$\det(\xi f,\xi^2\eta f,\eta^2f)(0)\ne0$,
which is equivalent to both\/
$\det(\xi f,\xi\eta\xi f,\eta^2f)(0)\ne0$ and\/
$\det(\xi f,\eta\xi^2 f,\eta^2f)(0)\ne0$.
Moreover, it does not depend on the
choice of\/ $SB$-$2$-adapted vector fields and
diffeomorphisms on the targe space.
\end{lemma}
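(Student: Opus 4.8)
The plan is to derive part~(1) from part~(2) and prove~(2) directly. For the reduction, observe that when $(u,v)$ is an $SB$-$2$-adapted coordinate system, $(\partial_u,\partial_v)$ is an $SB$-$2$-adapted vector fields with $(\phi(\partial_u,\partial_v))_{uu}(0)=\xi^2\phi(\xi,\eta)(0)$ and $\det(f_u,f_{uuv},f_{vv})(0)=\det(\xi f,\xi^2\eta f,\eta^2f)(0)$ for $(\xi,\eta)=(\partial_u,\partial_v)$; since any two $SB$-$2$-adapted coordinate systems furnish $SB$-$2$-adapted vector fields, the invariance asserted in~(1) is a special case of the one in~(2). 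So it suffices to treat~(2). The first step is the identity $\xi^2\phi(\xi,\eta)(0)=\det(\xi f,\xi^2\eta f,\eta^2f)(0)$: expanding $\phi(\xi,\eta)=\det(\xi f,\eta f,\eta^2f)$ and applying $\xi$ twice via the multilinearity of $\det$, every term of the resulting sum of determinants other than $\det(\xi f,\xi^2\eta f,\eta^2f)$ has, at $0$, a column equal to $\eta f$ or to $\xi\eta f$, both of which vanish at $0$ since $(\xi,\eta)$ is $SB$-$2$-adapted. (The same bookkeeping gives $\xi\phi(\xi,\eta)(0)=0$, used below.)

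Next I would show the three determinants in~(2) coincide at $0$. From $\xi\eta f(0)=\eta\xi f(0)=0$ we get $[\xi,\eta]f(0)=0$ for the Lie bracket $[\xi,\eta]$, hence $[\xi,\eta](0)\in\ker df_0=\langle\eta(0)\rangle$; writing $[\xi,\eta]=p\xi+q\eta$ this forces $p(0)=0$. Then
\[
\xi^2\eta f-\xi\eta\xi f=\xi\bigl([\xi,\eta]f\bigr)=(\xi p)\,\xi f+p\,\xi^2f+(\xi q)\,\eta f+q\,\xi\eta f
\]
equals $(\xi p)(0)\,\xi f(0)$ at $0$, while $\xi\eta\xi f-\eta\xi^2f=[\xi,\eta](\xi f)=p\,\xi^2f+q\,\eta\xi f$ vanishes at $0$. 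As these differences are proportional to $\xi f(0)$ (or are $0$), they do not change $\det(\xi f,\,\cdot\,,\eta^2f)(0)$, so the three determinants are equal at $0$, proving the equivalences in~(2) given the first identity.

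For the invariance under a change of $SB$-$2$-adapted vector fields, by Lemmas~\ref{lem:vfexist01} and~\ref{lem:sb2adcoordex} I would fix an $SB$-$2$-adapted coordinate system $(u,v)$ and compare an arbitrary $SB$-$2$-adapted vector fields $(\tilde\xi,\tilde\eta)$, $\tilde\xi=a\partial_u+b\partial_v$, $\tilde\eta=c\partial_u+d\partial_v$, with $(\partial_u,\partial_v)$; by Lemma~\ref{lem:2adaptedcond}(2) and adaptivity, $a_v(0)=b(0)=c(0)=c_u(0)=0$, $a(0)d(0)\ne0$, hence also $\tilde\xi c(0)=0$. By multilinearity,
\[
\phi(\tilde\xi,\tilde\eta)=(ad-bc)\det(f_u,f_v,\tilde\eta^2f)=(ad-bc)\bigl(c^2\det(f_u,f_v,f_{uu})+2cd\det(f_u,f_v,f_{uv})+d^2\phi(\partial_u,\partial_v)\bigr),
\]
and, since $f_v(0)=0$ makes every $\det(f_u,f_v,\cdot\,)$ vanish at $0$ and $(\phi(\partial_u,\partial_v))_u(0)=0$ by the first step, a Leibniz-rule computation using the constraints on $a,b,c,d$ kills all $\tilde\xi^2$-contributions of the $c^2(\cdot\,)$, $cd(\cdot\,)$ summands and of $ad-bc$ at $0$, leaving $\tilde\xi^2\phi(\tilde\xi,\tilde\eta)(0)=a(0)^3d(0)^3\,(\phi(\partial_u,\partial_v))_{uu}(0)$; as $a(0)d(0)\ne0$, the nonvanishing is independent of the choice. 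Independence under target diffeomorphisms follows as in Lemma~\ref{lem:sb2yyy} from Lemma~\ref{lem:target}: because $\eta f(0)=0$ and $\xi\eta f(0)=0$, the chain-rule correction terms in each of the three columns vanish at $0$, so the determinant is merely multiplied by the Jacobian of the target diffeomorphism at $0$.

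The main obstacle is this last step: one must carefully track which of the many Leibniz-rule terms in $\tilde\xi^2\phi(\tilde\xi,\tilde\eta)$ survive at $0$ once the constraints $a_v(0)=b(0)=c(0)=c_u(0)=0$ are imposed together with the already-established facts $\eta f(0)=\xi\eta f(0)=0$ and $\xi\phi(\xi,\eta)(0)=0$. Everything else is a routine determinant expansion.
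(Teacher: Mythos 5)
Your proof is correct, and while its skeleton (reduce (1) to (2), fix a reference $SB$-$2$-adapted coordinate system, compute, and cite Lemma \ref{lem:target} for target diffeomorphisms) coincides with the paper's, two of your key steps take a genuinely different route. Where the paper expands $\xi^2\eta f$, $\xi\eta\xi f$ and $\eta\xi^2 f$ explicitly in the coordinate basis and reads off that each has the form $*f_u+a^2df_{uuv}+*f_{vv}$ at $0$, you observe that $[\xi,\eta]f(0)=0$ forces $[\xi,\eta](0)\in\ker df_0$, whence the pairwise differences of those three vectors lie in $\spann{\xi f(0)}$ at the origin; this is cleaner and makes it transparent why the three determinants agree. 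Likewise, for the change of $SB$-$2$-adapted vector fields you differentiate the scalar identity $\phi(\tilde\xi,\tilde\eta)=(ad-bc)\bigl(c^2A+2cdB+d^2\phi_0\bigr)$ and kill the correction terms by order-of-vanishing considerations (using $c(0)=c_u(0)=b(0)=0$, $A(0)=B(0)=\phi_0(0)=0$ and $(\phi_0)_u(0)=0$), rather than transforming each column of the determinant as the paper does; both computations land on $\tilde\xi^2\phi(\tilde\xi,\tilde\eta)(0)=a(0)^3d(0)^3(\phi_0)_{uu}(0)$, so the conclusion is the same. Your multilinearity bookkeeping for $\xi^2\phi(\xi,\eta)(0)=\det(\xi f,\xi^2\eta f,\eta^2f)(0)$ and $\xi\phi(\xi,\eta)(0)=0$ checks out (every other term of the expansion carries a column $\eta f$ or $\xi\eta f$, both zero at $0$). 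What each approach buys: the paper's explicit expansion is uniform with the template used in Lemmas \ref{lem:sb2yyy} and \ref{lem:s2cond}, whereas your bracket argument is more conceptual and would adapt with less bookkeeping to the analogous higher-order statements.
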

\begin{proof}
It is enough to show the case of vector fields.
As in the proof of Lemma \ref{lem:sb2yyy},
we assume  $(u,v)$ 
is an $SB$-$2$-adapted coordinate system.
Let $(\xi,\eta)$ be an $SB$-$2$-adapted vector fields, where
$\xi=a\partial_u+b\partial_v$,
$\eta=c\partial_u+d\partial_v$ and
$a_v=0,b=0,c_u=0$, $ad\ne0$.
Then it holds that
$\xi f=af_u$,
$\eta\xi f=a(c_uf_u+cf_{uu}+d_uf_v+df_{uv})
+b(c_vf_u+cf_{uv}+d_vf_v+df_{vv})$,
$\eta^2f=c(c_uf_u+cf_{uu}+d_uf_v+df_{uv})
+d(c_vf_u+cf_{uv}+d_vf_v+df_{vv})$ and
$\eta\xi^2f=a((a_uc_u+ac_{uu}+b_uc_v)f_u+b_udf_{vv}+adf_{uuv})$,
$\eta^3 f=*f_u+*f_{vv}+d^3f_{vvv}$
at $0$.
On the other hand, we have
$\xi^2\phi(\xi,\eta)=a^3d^3\det(\xi f,\xi^2\eta f,\eta^2f)$ at $0$.
By the same calculations, we see 
$\xi^2\eta f$,
$\xi\eta\xi f$
has the form $*f_u+a^2df_{uuv}+*f_{vv}$.
Thus the independence of the choice of vector fields follows.
For the independence of target diffeomorphisms, 
see Lemma \ref{lem:target}.
\end{proof}

\begin{lemma}
$(1)$ Let\/ $(u,v)$ be an\/ $SB$-$2$-adapted coordinate system.
Then it holds that\/ 
$(\phi(\partial_u,\partial_v))_{uv}(0)=0$.
$(2)$ Let\/ $(\xi,\eta)$ be an\/ $SB$-$2$-adapted vector fields.
Then\/ $\xi\eta\phi(\xi,\eta)(0)=\eta\xi\phi(\xi,\eta)(0)=0$ holds.
\end{lemma}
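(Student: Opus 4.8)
The plan is to reduce $(1)$ to $(2)$ and prove $(2)$ by two short computations. For the reduction: if $(u,v)$ is an $SB$-$2$-adapted coordinate system, then $(\partial_u,\partial_v)$ is an $SB$-$2$-adapted vector fields by definition, and since partial derivatives commute, $(\phi(\partial_u,\partial_v))_{uv}(0)=\xi\eta\phi(\xi,\eta)(0)=\eta\xi\phi(\xi,\eta)(0)$ with $(\xi,\eta)=(\partial_u,\partial_v)$; thus $(1)$ is the special case of $(2)$.

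For $(2)$, I would start from the identity $\eta\phi=\det(\eta\xi f,\eta f,\eta^2f)+\det(\xi f,\eta f,\eta^3f)$ already recorded just before the lemma (it holds for any adapted $(\xi,\eta)$, since $\det(\xi f,\eta^2f,\eta^2f)\equiv0$). Differentiating once more along $\xi$ by the Leibniz rule for determinants produces a sum of six determinants, and the point is that each of them has one of the three vectors $\eta f$, $\xi\eta f$, $\eta\xi f$ among its columns. Since $(\xi,\eta)$ is $SB$-$2$-adapted these three vectors all vanish at $0$, so $\xi\eta\phi(0)=0$; this part is purely mechanical bookkeeping.

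The main obstacle is the passage to $\eta\xi\phi(0)$: repeating the brute-force expansion for $\xi\phi$ this time leaves two determinants, $\det(\xi f,\eta\xi\eta f,\eta^2f)(0)$ and $\det(\xi f,\eta^2f,\xi\eta^2f)(0)$, neither of which is zero by itself. The clean way around this is to use that $f$ is $SB$-type and not a Whitney umbrella, so (as recorded at the start of this subsection) $\det(\xi f,\eta^2f,\xi\eta f)(0)=0$, which (as noted just before it) is equivalent to $d\phi_0=0$. Then $\eta\xi\phi(0)=\xi\eta\phi(0)$ is the general fact that, at a critical point of a smooth function, the second directional derivatives along two vector fields coincide: $\xi\eta\phi-\eta\xi\phi=[\xi,\eta]\phi=d\phi([\xi,\eta])$, which vanishes where $d\phi$ does. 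Hence $\eta\xi\phi(0)=\xi\eta\phi(0)=0$, proving $(2)$. (If one prefers to avoid the hypothesis $d\phi_0=0$, the two leftover determinants can instead be shown to cancel directly: writing the vector field $[\xi,\eta]=p\xi+q\eta$ and using $\xi\eta f(0)=0$ gives $\det(\xi f,\eta^2f,[\xi,\eta]\eta f)(0)=0$, so $\xi\eta^2f$ may be replaced by $\eta\xi\eta f$ inside the determinant $\det(\xi f,\eta^2f,\,\cdot\,)(0)$.)
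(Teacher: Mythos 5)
Your proof is correct and follows essentially the same route as the paper: expand $\eta\phi=\det(\eta\xi f,\eta f,\eta^2f)+\det(\xi f,\eta f,\eta^3f)$, differentiate by $\xi$ and use $\eta f(0)=\xi\eta f(0)=\eta\xi f(0)=0$ to get $\xi\eta\phi(0)=0$, then obtain the other order from $[\xi,\eta]\phi=d\phi([\xi,\eta])$ together with $d\phi_0=0$. Your bookkeeping is in fact more explicit than the paper's, and the parenthetical direct cancellation of the two leftover determinants is a valid (optional) alternative.
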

\begin{proof}
It is enough to show the case of vector fields.
Since
$\eta\phi(\xi,\eta)=\det(\xi\eta f,\eta f,\eta^2 f)+
\det(\xi f,\eta f,\eta^3f)$ and $\xi\eta f=
\eta\xi f=0$ holds at $0$, 
we see $\eta\xi \phi=0$ at $0$.
Since $[\eta,\xi]$ is a vector fields, and $d\phi_0=0$ holds,
$\eta\xi \phi=0$ at $0$ also follows.
\end{proof}
Let $(\xi,\eta)$ be an $SB$-$2$-adapted vector fields.
Since
$$
\hess \phi(\xi,\eta)(0)=
\pmt{
\xi^2\phi(0)&\eta\xi\phi(0)\\
\xi\eta\phi(0)&\eta^2\phi(0)}
=
\pmt{
\xi^2\phi(0)&0\\
0&\eta^2\phi(0)},
$$
if $f$ is not an $S_1^\pm$ singularity,
$\xi^2\phi(\xi,\eta)=0$ or
$\eta^2\phi(\xi,\eta)=0$ holds.
\begin{definition}
Let $f$ be an $SB$-type map-germ.
The germ $f$ is of $S$-{\it type\/}
if $\xi^2\phi(\xi,\eta)=0$, $\eta^2\phi(\xi,\eta)\ne0$ at $0$ hold
for an $SB$-$2$-adapted vector fields $(\xi,\eta)$.
The germ $f$ is of $B$-{\it type\/}
if $\eta^2\phi(\xi,\eta)=0$, $\xi^2\phi(\xi,\eta)\ne0$ at $0$ hold
for an $SB$-$2$-adapted vector fields $(\xi,\eta)$.
\end{definition}
Since the existence of an
$SB$-$2$-adapted vector fields implies 
the existence of $SB$-$2$-adapted coordinate system $(u,v)$.
Since $(\partial_u,\partial_v)$ is an
$SB$-$2$-adapted vector fields, and the both conditions of 
$S$-type and $B$-type do not depend on the choice of
and $SB$-$2$-adapted vector fields.
Thus 
the definition of $S$-type map-germ 
defined by using an $SB$-$2$-coordinate system,
and
the definition of $S$-type map-germ
defined by using an $SB$-$2$-adapted vector fields,
are equivalent, and
the same holds for the definition of $B$-type map-germ.

\subsection{The condition for\/ $S_2$ singularity}

\begin{lemma}\label{lem:s3adapted}
Let us assume\/ $f$ is of\/ $S$-type.
$(1)$ There exists an\/ $SB$-$2$-adapted coordinate system\/ $(u,v)$
such that\/ $f_{uuv}(0)=0$.
$(2)$ There exists an\/ $SB$-$2$-adapted vector fields\/ $(\xi,\eta)$
such that\/ $\eta\xi^2f(0)=\xi\eta\xi f(0)=\xi^2\eta f(0)=0$.
\end{lemma}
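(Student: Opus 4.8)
The plan is to reduce both assertions to a single coordinate normalization carried out one order higher than in Lemma~\ref{lem:vfexist01}(1). First I would fix, using Lemmas~\ref{lem:vfexist01} and~\ref{lem:sb2adcoordex}, an $SB$-$2$-adapted coordinate system $(u,v)$, so that $f_v(0)=f_{uv}(0)=0$. Since $f$ is of $S$-type, $\xi^2\phi(\xi,\eta)(0)=0$ for an $SB$-$2$-adapted vector fields, which by Lemma~\ref{lem:sb2xyy}(1) translates, in the coordinate system $(u,v)$, into $\det(f_u,f_{uuv},f_{vv})(0)=0$. Because $f$ is $SB$-type we have $f_u\times f_{vv}\ne0$ at $0$, hence $\{f_u(0),f_{vv}(0)\}$ is a basis of a $2$-plane and
\[
f_{uuv}(0)=\alpha f_u(0)+\beta f_{vv}(0)
\]
for uniquely determined constants $\alpha,\beta$. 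Assertion~(1) is then exactly the statement that these two coefficients can be removed by an $SB$-$2$-adapted change of source coordinates.

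For the construction in~(1), I would take the new coordinates $(x,y)$ related to $(u,v)$ by $u=x-\tfrac{\alpha}{2}x^2y$ and $v=y-\tfrac{\beta}{2}x^2$, and set $g(x,y)=f(u(x,y),v(x,y))$. The Jacobian of $(u(x,y),v(x,y))$ at $0$ is the identity, so $(x,y)$ is again adapted; a short computation using $f_v(0)=f_{uv}(0)=0$ gives $g_{xy}(0)=0$, so $(x,y)$ is $SB$-$2$-adapted (equivalently one checks the conditions of Lemma~\ref{lem:2adaptedcond}(1), which hold because the correction terms are of order $\ge2$). Differentiating three times, every chain-rule cross term -- each a product of $f_{uu},f_{uv},f_{vv}$ with a low-order derivative of the coordinate change -- vanishes at $0$, again because $f_v(0)=f_{uv}(0)=0$ and the correction terms are of order $\ge2$. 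What remains is
\[
g_{xxy}(0)=f_{uuv}(0)+2c\,f_u(0)+2b\,f_{vv}(0),
\]
where $c$ and $b$ are the coefficients of $x^2y$ in $u(x,y)$ and of $x^2$ in $v(x,y)$; with $c=-\tfrac{\alpha}{2}$, $b=-\tfrac{\beta}{2}$ this equals $f_{uuv}(0)-\alpha f_u(0)-\beta f_{vv}(0)=0$. Thus the cubic term in $u$ absorbs the $f_u$-component and the quadratic term in $v$ absorbs the $f_{vv}$-component.

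For~(2) the quickest argument is to take $\xi=\partial_x$, $\eta=\partial_y$ for the coordinate system produced in~(1): coordinate vector fields commute, so $\eta\xi^2f=\xi\eta\xi f=\xi^2\eta f=g_{xxy}$, and this vanishes at $0$ by~(1). If instead one wants an explicit vector-field construction without invoking~(1), I would write $\tilde\xi=a\xi+b\eta$, $\tilde\eta=c\xi+d\eta$ with $d=1$, $a=1+(\text{quadratic})$, and $b,c$ quadratic functions chosen so that the $SB$-$2$-adapted constraints $\eta a=b=\xi c=0$ at $0$ of Lemma~\ref{lem:2adaptedcond}(2) hold; as in the proof of Lemma~\ref{lem:sb2xyy}(2) each of $\eta\tilde\xi^2f$, $\tilde\xi\tilde\eta\tilde\xi f$, $\tilde\xi^2\tilde\eta f$ equals $a^2d\,f_{uuv}$ modulo $\spano{f_u,f_{vv}}$, and the free first and second derivatives of $a,b,c$ then kill the $f_u$- and $f_{vv}$-components, keeping in mind that the differences among the three expressions are the commutator terms $[\tilde\eta,\tilde\xi](\tilde\xi f)$ and the like, which must also be made to vanish at $0$.

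The main obstacle is the bookkeeping: in paragraph two, confirming that no chain-rule cross term of the coordinate change contributes to $g_{xxy}(0)$ -- which is precisely why one first passes to an $SB$-$2$-adapted system, where $f_v(0)=f_{uv}(0)=0$ -- and, in the vector-field version of~(2), arranging all three distinct mixed third derivatives to vanish simultaneously rather than merely modulo $\spano{f_u,f_{vv}}$.
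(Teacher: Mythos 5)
Your proof is correct. Part (1) is essentially the paper's own argument: pass to an $SB$-$2$-adapted system, use the $S$-type condition (via the coordinate form $\det(f_u,f_{uuv},f_{vv})(0)=0$ of Lemma \ref{lem:sb2xyy}) to write $f_{uuv}(0)=\alpha f_u(0)+\beta f_{vv}(0)$, and absorb the two components by the substitution $u=x-\tfrac{\alpha}{2}x^2y$, $v=y-\tfrac{\beta}{2}x^2$; your bookkeeping of the chain-rule terms at $0$ checks out. For part (2) you diverge from the paper: you simply take the coordinate vector fields $(\partial_x,\partial_y)$ of the system from (1), for which the three mixed third derivatives coincide and equal $g_{xxy}(0)=0$. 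This is a legitimate and shorter route to the existence statement. The paper instead builds an explicit pair $(\bar\xi,\bar\eta)$ with concrete coefficients \eqref{eq:fors3vf} starting from an arbitrary adapted pair $(\partial_u,\partial_v)$ --- not because the lemma requires it, but because that explicit formula is reused verbatim in the applications (the ruled-surface and center-map theorems set $(\bar\xi,\bar\eta)$ ``as in Lemma \ref{lem:s3adapted}'' with specified $\alpha,\beta,\alpha_1,\beta_1$). So your argument proves the lemma as stated, while the paper's buys a ready-made construction for later use; your sketched alternative explicit construction is the harder part and is indeed left incomplete, but it is not needed once the reduction of (2) to (1) is accepted.
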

\begin{proof}
(1)
By the assumption, there exists an $SB$-$2$-adapted coordinate system $(x,y)$.
Since $f$ is of $S$-type, 
$\det(f_x,f_{xxy},f_{yy})(0)=0$ holds, we set
$f_{xxy}=\alpha f_x+\beta f_{yy}$ at $0$.
Setting a coordinate system $(u,v)$ by
$x=u-\alpha u^2v/2$,
$y=v-\beta u^2/2$,
then by Lemma \ref{lem:2adaptedcond}, $(u,v)$ is of $SB$-$2$-adapted,
and
$f_{uuv}=-\alpha f_x-\beta f_{yy}+f_{xxy}$ holds.
Thus $f_{uuv}(0)=0$ holds.
(2)
Let $(\xi,\eta)$ be an adapted vector fields.
We take a coordinate system $(u,v)$ satisfying
$\xi=\partial_u,\eta=\partial_v$ at $0$.
We set an $SB$-adapted vector fields $(\tilde \xi,\tilde \eta)$
by \eqref{eq:sbadvf} with \eqref{eq:sbadvf2},
where $f_{uv}=\alpha f_u+\beta f_{vv}$ at $0$.
Since $f$ is of $S$-type, $\tilde\xi^2\tilde\eta f
=\alpha_1 \tilde\xi f+\beta_1\tilde\eta^2f$ at $0$ holds.
Setting 
$$\bar\xi=a_1\partial_u+b_1\partial_v,\quad
\bar \eta=c_1\partial_u+d_1\partial_v,$$
where 
\begin{align}
a_1&=1-\alpha v +(-\alpha_1 - 3 \alpha^2 \beta)uv,\ 
b_1=-\beta+(-3 \alpha \beta^2 - \beta_1)u,\label{eq:fors3vf}\\
c_1&=-\alpha u+(-\alpha_1-2\alpha^2 \beta)u^2/2,\ 
d_1=1,\nonumber
\end{align}
we see
$$
\bar\xi\bar\eta f=
\bar\eta\bar\xi f=
\bar\xi^2\bar\eta f=
\bar\xi\bar\eta\bar\xi f=
\bar\eta\bar\xi^2 f=0
$$
at $0$.
This proves the assertion.
\end{proof}
We remark that in the above proof, the vector field
is not made from an $SB$-$2$-adapted vector fields $(\xi,\eta)$
but from an adapted vector fields $(\partial_u,\partial_v)$.
\begin{definition}
An $SB$-$2$-adapted coordinate system $(u,v)$ is said to be
$S$-$3$-{\it adapted\/} if
$f_{uuv}(0)=0$ holds.
An $SB$-$2$-adapted vector fields $(\xi,\eta)$ is said to be
$S$-$3$-{\it adapted\/} if
$\eta\xi^2f(0)=\xi\eta\xi f(0)=\xi^2\eta f(0)=0$ holds.
\end{definition}
\begin{lemma}
The existence of an\/ $S$-$3$-adapted vector fields\/ $(\xi,\eta)$
implies the existence of an\/ $S$-$3$-adapted coordinate system\/ $(u,v)$.
\end{lemma}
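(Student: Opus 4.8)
The plan is to follow the construction in the proof of Lemma~\ref{lem:s3adapted}(1), but with the hypothesis ``$f$ is of $S$-type'' replaced by the two consequences of the existence of an $S$-$3$-adapted vector fields that are actually used there. Let $(\xi,\eta)$ be an $S$-$3$-adapted vector fields. Since it is in particular $SB$-$2$-adapted, Lemma~\ref{lem:sb2adcoordex} produces an $SB$-$2$-adapted coordinate system $(x,y)$, and $(\partial_x,\partial_y)$ is then an $SB$-$2$-adapted vector fields. Moreover, from $\xi^2\eta f(0)=0$ we get $\det(\xi f,\xi^2\eta f,\eta^2 f)(0)=0$ for free, and by the invariance statement in Lemma~\ref{lem:sb2xyy}(2) this condition is independent of the choice of $SB$-$2$-adapted vector fields; applying it to $(\partial_x,\partial_y)$ gives $\det(f_x,f_{xxy},f_{yy})(0)=0$, that is, $(\phi(\partial_x,\partial_y))_{xx}(0)=0$.

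With these two facts in hand, I would reproduce the coordinate change of Lemma~\ref{lem:s3adapted}(1) verbatim: write $f_{xxy}=\alpha f_x+\beta f_{yy}$ at $0$, set $x=u-\alpha u^2v/2$ and $y=v-\beta u^2/2$, and check by Lemma~\ref{lem:2adaptedcond}(1) (namely $x_v(0)=x_{uv}(0)=y_u(0)=0$) that $(u,v)$ is again $SB$-$2$-adapted. A third-order chain-rule computation, identical to the one in Lemma~\ref{lem:s3adapted}, then yields $f_{uuv}(0)=f_{xxy}(0)-\alpha f_x(0)-\beta f_{yy}(0)=0$, so $(u,v)$ is $S$-$3$-adapted, as required.

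There is no genuine obstacle here: the only delicate point is the bookkeeping in the first step, namely invoking Lemma~\ref{lem:sb2xyy} in the ``invariant under the choice of $SB$-$2$-adapted vector fields'' direction so that the vanishing determinant established for the given $(\xi,\eta)$ transfers to the coordinate vector fields $(\partial_x,\partial_y)$; everything after that is the same routine computation already performed, and in particular the argument never needs to know that $f$ is of $S$-type. Alternatively, one may first note that the existence of an $S$-$3$-adapted vector fields forces $f$ to be of $S$-type (it is $SB$-type and not a Whitney umbrella, $\xi^2\phi(\xi,\eta)(0)=0$ by the above, and $\eta^2\phi(\xi,\eta)(0)\ne0$ since the remaining case $\hess\phi(0)=0$ lies outside the range of singularities considered), and then simply quote Lemma~\ref{lem:s3adapted}(1).
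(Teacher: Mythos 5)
Your proposal is correct and follows essentially the same route as the paper: the paper's proof observes that the existence of an $S$-$3$-adapted vector fields gives that $f$ is of $S$-type, that this condition is independent of the choice of $SB$-$2$-adapted vector fields, and then invokes Lemma~\ref{lem:s3adapted}(1) — which is precisely your second, shorter alternative. Your primary route is a slightly more careful unpacking of the same argument (transferring only the vanishing of $\det(\xi f,\xi^2\eta f,\eta^2 f)(0)$ via Lemma~\ref{lem:sb2xyy}(2), which is the only consequence of $S$-typeness actually used in the construction), and it is equally valid.
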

\begin{proof}
By definition, if there exists an $S$-$3$-adapted vector fields,
then $f$ is of $S$-type.
Thus there exists an $SB$-$2$-adapted coordinate system.
The condition that $f$ is of $S$-type does not depend on the choice
of $SB$-$2$-adapted vector fields,
by Lemma \ref{lem:s3adapted}, there exists an
$S$-$3$-adapted coordinate system.
\end{proof}

\begin{lemma}
Let\/ $f$ be of\/ $S$-type.
$(1)$ Let\/ $(u,v)$ be an\/ $S$-$3$-adapted coordinate system.
An\/ $SB$-$2$ adapted coordinate system\/ $(x,y)=(x(u,v),y(u,v))$ 
$($i.e., $x_v=0, x_{uv}=0,y_u=0$ hold\/$)$
is of\/ $S$-$3$-adapted if and only if\/
$x_{uuv}=0,x_{uu}=0$.
$(2)$ Let\/ $(\xi,\eta)$ be an\/ $S$-$3$-adapted vector fields.
An\/ $SB$-$2$ adapted vector fields\/ $(\tilde\xi,\tilde\eta)$ where\/
$\tilde \xi=a\xi+b\eta$,
$\tilde \eta=c\xi+d\eta$ $($i.e., 
$c=0,\eta a=0,b=0,\xi c=0$ hold\/$)$
is of\/ $S$-$3$-adapted if and only if\/
$\xi\eta a=0,\xi b=0,\xi^2c=0$.
\end{lemma}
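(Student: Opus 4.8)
Here is the plan I would follow.

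The plan is to derive both equivalences from direct chain‑rule (Leibniz) computations at the origin, in exactly the spirit of Lemma~\ref{lem:2adaptedcond} and Lemma~\ref{lem:s3adapted}. The only inputs needed are the complete list of derivatives of $f$ that vanish at $0$ for an $SB$‑$2$‑adapted, resp.\ $S$‑$3$‑adapted, frame; the constraints on the transition data recorded in Lemma~\ref{lem:2adaptedcond}; and the fact that $f_u(0)$ and $f_{vv}(0)$ are linearly independent (which holds because $f$ is of $SB$‑type and not a Whitney umbrella, and which persists, as a spanning plane, under any admissible change of frame).

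For $(1)$, let $T(u,v)=(x(u,v),y(u,v))$ be the transition map, so that $f$ written in $(u,v)$ equals $f$ written in $(x,y)$ composed with $T$. Since $(u,v)$ is $S$‑$3$‑adapted we have $f_v(0)=f_{uv}(0)=f_{uuv}(0)=0$, and since $(x,y)$ is $SB$‑$2$‑adapted we have $f_y(0)=f_{xy}(0)=0$ together with $x_v(0)=x_{uv}(0)=y_u(0)=0$ by Lemma~\ref{lem:2adaptedcond}. I would now expand $\partial_u^2\partial_v$ by the chain rule and delete every term containing one of these vanishing quantities; what survives is the identity
\[
f_{uuv}=x_u^2\,y_v\,f_{xxy}+x_{uuv}\,f_x+y_v\,y_{uu}\,f_{yy}\qquad\text{at}\quad 0,
\]
which specialises to the identity used in Lemma~\ref{lem:s3adapted} when $x_u(0)=y_v(0)=1$. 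Because the left‑hand side vanishes, $x_u(0)y_v(0)\ne0$, and $f_x(0),f_{yy}(0)$ span the same plane as $f_u(0),f_{vv}(0)$ and are hence linearly independent, $(x,y)$ is $S$‑$3$‑adapted, i.e.\ $f_{xxy}(0)=0$, if and only if $x_{uuv}(0)=0$ and $y_{uu}(0)=0$.

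For $(2)$ I would first strip off the antisymmetry. Since $(\tilde\xi,\tilde\eta)$ is $SB$‑$2$‑adapted, $[\tilde\xi,\tilde\eta]f(0)=\tilde\xi\tilde\eta f(0)-\tilde\eta\tilde\xi f(0)=0$, so $[\tilde\xi,\tilde\eta]_0\in\ker df_0$, whence $[\tilde\xi,\tilde\eta]=P\tilde\xi+Q\tilde\eta$ with $P(0)=0$. Applying $\tilde\xi$, resp.\ $\tilde\eta$, to $\tilde\xi f$ and using $\tilde\eta f(0)=\tilde\xi\tilde\eta f(0)=\tilde\eta\tilde\xi f(0)=0$ gives
\[
\tilde\xi\tilde\eta\tilde\xi f(0)=\tilde\eta\tilde\xi^2 f(0),\qquad
\tilde\xi^2\tilde\eta f(0)-\tilde\xi\tilde\eta\tilde\xi f(0)=(\tilde\xi P)(0)\,a(0)\,\xi f(0),
\]
so the three conditions $\tilde\eta\tilde\xi^2 f(0)=\tilde\xi\tilde\eta\tilde\xi f(0)=\tilde\xi^2\tilde\eta f(0)=0$ collapse to the vector equation $\tilde\xi\tilde\eta\tilde\xi f(0)=0$ together with the scalar equation $(\tilde\xi P)(0)=0$. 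It then remains to expand $\tilde\xi\tilde\eta\tilde\xi f$ and $[\tilde\xi,\tilde\eta]$ bilinearly, substitute $\tilde\xi=a\xi+b\eta$ and $\tilde\eta=c\xi+d\eta$, evaluate at $0$ using $b(0)=c(0)=\eta a(0)=\xi c(0)=0$ and the $S$‑$3$‑adapted vanishings $\eta f(0)=\xi\eta f(0)=\eta\xi f(0)=\eta\xi^2 f(0)=\xi\eta\xi f(0)=\xi^2\eta f(0)=0$, and read off the components along the basis $\xi f(0),\eta^2 f(0)$ of the relevant plane. The bookkeeping should give that $\tilde\xi\tilde\eta\tilde\xi f(0)=0$ is equivalent to $\xi\eta a(0)=\xi b(0)=0$, and that $(\tilde\xi P)(0)=0$ is equivalent to $\xi^2 c(0)=0$; independence from the choice of target diffeomorphism is then handled exactly as in the preceding lemmas. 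As a consistency check, note that in both parts the new adaptivity conditions are precisely the $SB$‑$2$ conditions of the form ``(derivative)$\,=0$'' differentiated once more along the transverse direction $\partial_u$, resp.\ $\xi$.

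The routine steps are the two chain‑rule expansions; the genuine obstacle is the bookkeeping in $(2)$, where the terms of $\tilde\xi^2\tilde\eta f$ and $\tilde\xi\tilde\eta\tilde\xi f$ must be kept separate — they differ exactly by the $[\tilde\xi,\tilde\eta]$‑contribution, which has no analogue in the coordinate statement $(1)$ and is responsible for the extra condition $\xi^2 c(0)=0$ — and each surviving term correctly attributed to one of the three scalar equations. I expect the Lie‑bracket reduction above to be the cleanest way to organise that step.
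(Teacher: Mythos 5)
Your proposal is correct and follows the same route as the paper, which disposes of this lemma with the single sentence ``one can see this by a direct calculation''; your chain-rule expansion for (1) and the bracket-based reduction $\tilde\xi^{2}\tilde\eta f-\tilde\xi\tilde\eta\tilde\xi f=(\tilde\xi P)(0)\,a(0)\,\xi f(0)$ for (2) are a legitimate way to organize that calculation, with the small caveat that $(\tilde\xi P)(0)=a\,\xi^{2}c+\xi b\,\eta c-d\,\xi\eta a$ at $0$, so $(\tilde\xi P)(0)=0$ collapses to $\xi^{2}c(0)=0$ only after the conditions $\xi\eta a(0)=\xi b(0)=0$ coming from $\tilde\xi\tilde\eta\tilde\xi f(0)=0$ have been imposed (and one also needs $\xi p(0)=0$ for $[\xi,\eta]=p\xi+q\eta$, which follows from the $S$-$3$-adaptivity of $(\xi,\eta)$). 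One point worth recording: your computation in (1) yields the condition $y_{uu}(0)=0$ rather than the $x_{uu}(0)=0$ appearing in the statement; your version is the correct one, since it matches the vector-field condition $\xi b=0$ in part (2) (where $b$ corresponds to $y_u$) and the pattern of the analogous $B$-$3$ lemma (whose conditions are $x_{vvv}=0$, $y_{vv}=0$), so the stated $x_{uu}=0$ appears to be a typographical error.
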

\begin{proof}
One can see this lemma by a direct calculation.
\end{proof}
\begin{lemma}\label{lem:s2cond}
Let\/ $f$ be of\/ $S$-type.
Then for an\/ $S$-$3$-adapted coordinate system\/ $(u,v)$,
the condition\/ $(\phi(\partial_u,\partial_v))_{uuu}(0)\ne0$ is equivalent to\/
$\det(f_u,f_{uuuv},f_{vv})(0)\ne0$, and it
does not depend on the choice of\/
$S$-$3$-adapted coordinate system and diffeomorphism on the target space.
For an\/ $S$-$3$-adapted vector fields\/ $(\xi,\eta)$,
the condition\/
$\xi^3\phi(\xi,\eta)\ne0$ is equivalent to\/
$\det(\xi f,\xi^3\eta f,\eta^2f)\ne0$,
and it
does not depend on the choice of\/
$S$-$3$-adapted vector fields and diffeomorphism on the target space.
\end{lemma}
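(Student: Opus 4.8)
The plan is to imitate the proofs of Lemmas \ref{lem:sb2yyy} and \ref{lem:sb2xyy}. It suffices to prove the assertions about vector fields: an $S$-$3$-adapted vector fields makes $f$ of $S$-type, so by the preceding lemma there is an $S$-$3$-adapted coordinate system $(u,v)$, which we fix once and for all. Then the coordinate assertion is the case $(\xi,\eta)=(\partial_u,\partial_v)$ of the vector-field one, and its independence from the choice of coordinate system follows by applying the vector-field case to $(\xi,\eta)=(\partial_x,\partial_y)$ for any other $S$-$3$-adapted coordinate system $(x,y)$ (which is then an $S$-$3$-adapted vector fields), since $\partial_x^3\phi(\partial_x,\partial_y)$ is the third $x$-derivative of the function $\phi(\partial_x,\partial_y)$. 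Put $\psi:=\phi(\partial_u,\partial_v)=\det(f_u,f_v,f_{vv})$; we shall freely use the values at $0$ fixed by adaptivity, namely $f_v=f_{uv}=f_{uuv}=0$, together with $d\psi_0=0$, $\psi_{uv}(0)=0$ and (since $f$ is of $S$-type) $\psi_{uu}(0)=0$.

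The first step is the identity $\psi_{uuu}(0)=\det(f_u,f_{uuuv},f_{vv})(0)$. Expanding $\psi_{uuu}$ by the Leibniz rule into a sum of determinants whose columns are partial derivatives of $f$, every summand other than $\det(f_u,f_{uuuv},f_{vv})$ has one of $f_v$, $f_{uv}$, $f_{uuv}$ among its columns and hence vanishes at $0$. This is already the coordinate form of the lemma: $\psi_{uuu}(0)\ne0$ if and only if $\det(f_u,f_{uuuv},f_{vv})(0)\ne0$.

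The second step treats a general $S$-$3$-adapted vector fields $(\xi,\eta)$, written $\xi=a\partial_u+b\partial_v$, $\eta=c\partial_u+d\partial_v$ with $ad\ne0$. The $SB$-$2$- and $S$-$3$-adaptivity conditions amount, at $0$, to $b=b_u=0$, $c=c_u=c_{uu}=0$ and $a_v=a_{uv}=0$. Since $f$ is of $S$-type, $d\phi_0=0$ and $\hess\phi(\xi,\eta)(0)=\pmt{0&0\\0&\eta^2\phi(\xi,\eta)(0)}$, so the derivatives $\xi\phi$, $\xi^2\phi$, $\eta\phi$, $\xi\eta\phi$, $\eta\xi\phi$ (with $\phi=\phi(\xi,\eta)$) all vanish at $0$. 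Expanding $\xi^3\phi(\xi,\eta)$, every term that differentiates a coefficient of $\xi$, or carries a factor among $b$, $b_u$, $b_{uu}$, is multiplied at $0$ by one of these vanishing derivatives of $\phi$ (or by $\psi_{uu}(0)=0$) and drops out, leaving $\xi^3\phi(\xi,\eta)(0)=a(0)^4d(0)^3\,\psi_{uuu}(0)$. By the same bookkeeping as for $\xi^2\eta f$ in Lemma \ref{lem:sb2xyy}, at $0$ one gets $\xi f(0)=a(0)f_u(0)$, that $\eta^2 f(0)$ equals $d(0)^2f_{vv}(0)$ modulo $f_u(0)$, and that $\xi^3\eta f(0)$ equals $a(0)^3d(0)f_{uuuv}(0)$ modulo $f_u(0)$ and $f_{vv}(0)$ (the $f_{uv}$- and $f_{uuv}$-columns drop out by $S$-$3$-adaptivity, and the contributions coming from $b$ land only in the $f_u$- and $f_{vv}$-directions); hence $\det(\xi f,\xi^3\eta f,\eta^2 f)(0)=a(0)^4d(0)^3\det(f_u,f_{uuuv},f_{vv})(0)$. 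Since $a(0)d(0)\ne0$, combining this with the first step shows that $\xi^3\phi(\xi,\eta)(0)\ne0$, $\det(\xi f,\xi^3\eta f,\eta^2 f)(0)\ne0$ and $\det(f_u,f_{uuuv},f_{vv})(0)\ne0$ are all equivalent; as the last condition does not involve $(\xi,\eta)$, this yields both the stated equivalence and the independence of the choice of $S$-$3$-adapted vector fields, and independence under diffeomorphisms of the target space follows from Lemma \ref{lem:target}.

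The step I expect to be the main obstacle is the second: differentiating the $3\times3$ determinant $\phi$ three times, and expanding $\xi^3\eta f$ and $\eta^2 f$ for non-coordinate vector fields, produce a long list of terms, and one must check (using precisely the $SB$-$2$- and $S$-$3$-adaptivity normalizations together with the $S$-type vanishing $\psi_{uu}(0)=0$) that every term other than the designated leading one either vanishes at $0$ or is a linear combination of $f_u(0)$ and $f_{vv}(0)$, which is annihilated inside $\det(\xi f,\,\cdot\,,\eta^2 f)(0)$ because $\xi f(0)$ is parallel to $f_u(0)$ and $\eta^2 f(0)$ lies in the span of $f_u(0)$ and $f_{vv}(0)$.
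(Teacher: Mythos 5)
Your proposal is correct and follows essentially the same route as the paper: reduce to the vector-field case, expand everything in an $S$-$3$-adapted coordinate system, check that the adaptivity normalizations kill all but the leading terms, and invoke Lemma~\ref{lem:target} for target diffeomorphisms. The only (harmless) difference is organizational: the paper first gets $\xi^3\phi(\xi,\eta)(0)=\det(\xi f,\xi^3\eta f,\eta^2f)(0)$ directly from $\eta f=\xi\eta f=\xi^2\eta f=0$ at $0$ and then computes $\xi^3\eta f=*f_u+a^3df_{uuuv}+*f_{vv}$, whereas you route both quantities through the common value $a^4d^3\det(f_u,f_{uuuv},f_{vv})(0)$.
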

\begin{proof}
It is enough to show the case of vector fields.
Noticing $\xi^2\eta f=0$, we have
$\xi^3\phi(\xi,\eta)=\det(\xi f,\xi^3\eta f,\eta^2f)$ at $0$.
Taking an $S$-$3$-adapted coordinate system $(u,v)$, we set
$
\xi=a\partial_u+b\partial_v,\quad
\eta=c\partial_u+d\partial_v.
$
Then we have
$$
\xi^3\eta f
=
*f_u+a^3df_{uuuv}+*f_{vv}.
$$
This proves the assertion.
For the independence of target diffeomorphisms, 
see Lemma \ref{lem:target}.
\end{proof}

\subsection{The condition for\/ $B_2$ singularity}
\begin{lemma}\label{lem:vfexist02}
Let us assume\/ $f$ is of\/ $B$-type.
$(1)$ There exists an\/ $SB$-$2$-adapted coordinate system\/ $(u,v)$
such that\/ $f_{vvv}(0)=0$.
$(2)$ There exists an\/ $SB$-$2$-adapted vector fields system\/ $(\xi,\eta)$
such that\/ $\eta^3f(0)=0$.
\end{lemma}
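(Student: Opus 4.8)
The plan is to follow the template of Lemma~\ref{lem:s3adapted}, with $f_{vvv}$ now playing the role that $f_{uuv}$ played there. Consider first assertion~(1). Since $f$ is of $B$-type it is $SB$-type and not a Whitney umbrella, so an $SB$-$2$-adapted coordinate system $(x,y)$ exists; and being of $B$-type means $\eta^2\phi(\partial_x,\partial_y)(0)=0$, which by the identity preceding Lemma~\ref{lem:sb2yyy} equals $\det(f_x,f_{yy},f_{yyy})(0)$. As $f_x\times f_{yy}\ne0$ at $0$, I may write $f_{yyy}(0)=\alpha f_x(0)+\beta f_{yy}(0)$ for some $\alpha,\beta\in\R$, and set
\begin{equation*}
x=u-\frac{\alpha}{6}v^{3},\qquad y=v-\frac{\beta}{6}v^{2}.
\end{equation*}
This is a local diffeomorphism at $0$; since $y$ depends only on $v$, the inverse has $v$ a function of $y$ alone and $u=x+h(y)$ with $h=O(y^{3})$, so $v_x\equiv0$, $u_{xy}\equiv0$ and $u_y(0)=0$. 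Hence $(u,v)$ is adapted and, by Lemma~\ref{lem:2adaptedcond}(1), $SB$-$2$-adapted. Expanding $f_{vvv}$ by the chain rule and evaluating at $0$ (using $f_y(0)=0$, $x_v(0)=x_{vv}(0)=0$, $y_v(0)=1$, $x_{vvv}(0)=-\alpha$, $y_{vv}(0)=-\beta/3$) gives $f_{vvv}(0)=-\alpha f_x(0)-\beta f_{yy}(0)+f_{yyy}(0)=0$, as required. The point to watch here is that a pure $v^{3}$-correction in $x$ removes only the $f_x$-component of $f_{vvv}(0)$; the $v^{2}$-term in $y$ is exactly what cancels the $f_{yy}$-component carried by $f_{yyy}(0)$, which in the $B$-type case is genuinely present.

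For assertion~(2) I would mimic the proof of Lemma~\ref{lem:s3adapted}(2). Start from an adapted vector fields $(\xi,\eta)$ and coordinates $(u,v)$ with $(\xi,\eta)=(\partial_u,\partial_v)$ at $0$, and form the $SB$-$2$-adapted pair $(\tilde\xi,\tilde\eta)$ via \eqref{eq:sbadvf}--\eqref{eq:sbadvf2} with $f_{uv}=\alpha f_u+\beta f_{vv}$ at $0$. Because $f$ is of $B$-type, $\eta^2\phi(\tilde\xi,\tilde\eta)(0)=\det(\tilde\xi f,\tilde\eta^{2}f,\tilde\eta^{3}f)(0)=0$, and $\tilde\xi f\times\tilde\eta^{2}f\ne0$ forces $\tilde\eta^{3}f=\alpha_1\tilde\xi f+\beta_1\tilde\eta^{2}f$ at $0$ for some $\alpha_1,\beta_1$. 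Then set $\bar\xi=a_1\partial_u+b_1\partial_v$, $\bar\eta=c_1\partial_u+d_1\partial_v$ with $d_1=1$ and with $a_1,b_1,c_1$ equal to the values in \eqref{eq:sbadvf2} plus corrections that are cubic, resp.\ quadratic, in $v$ and built from $\alpha_1,\beta_1$ (the analogue of \eqref{eq:fors3vf}), the coefficients of the corrections being chosen so that $\bar\xi\bar\eta f(0)=\bar\eta\bar\xi f(0)=0$ and $\bar\eta^{3}f(0)=0$ hold simultaneously. Then $(\bar\xi,\bar\eta)$ is the desired $SB$-$2$-adapted vector fields with $\bar\eta^{3}f(0)=0$. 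As with Lemma~\ref{lem:vfexist01}, assertion~(1) already implies~(2), but the explicit construction is what is needed later.

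The hard part will be the bookkeeping in~(2): one has to see how $f_u$, $f_{vv}$, $f_{vvv}$ and the relevant mixed second and third derivatives transform under the family of coefficient-corrections, and then solve the resulting small linear system so that the two $SB$-$2$-adaptivity identities and the new vanishing $\bar\eta^{3}f(0)=0$ are met at once. This is routine but requires the same care as the computation that produced \eqref{eq:fors3vf}.
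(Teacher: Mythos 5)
Your part (1) is correct and is essentially the paper's argument: the same substitution $x=u-\alpha v^3/6$, $y=v-\beta v^2/6$, with the same chain-rule computation giving $f_{vvv}(0)=f_{yyy}(0)-\alpha f_x(0)-\beta f_{yy}(0)=0$; your verification of $SB$-$2$-adaptivity via the inverse change is fine. Since $(\partial_u,\partial_v)$ is then an $SB$-$2$-adapted vector fields with $\eta^3f(0)=f_{vvv}(0)=0$, the existence claim in (2) does follow, as you note.

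However, the explicit ansatz you propose for (2) is wrong, and this matters because the concrete vector fields are what get used later (e.g.\ in the ruled-surface computation). You fix $d_1=1$ and put all corrections into $a_1,b_1,c_1$. But $\bar\eta^3f$ depends only on $\bar\eta$, i.e.\ only on $c_1$ and $d_1$; corrections to $a_1,b_1$ cannot influence it. With $d_1\equiv1$ and $c_1(0)=0$ one computes at $0$
\begin{equation*}
\bar\eta^3f=(\bar\eta^2c_1)\,f_u+3(\bar\eta c_1)\,f_{uv}+f_{vvv}
=(\bar\eta^2c_1)\,f_u+3c_{1,v}(0)\bigl(\alpha f_u+\beta f_{vv}\bigr)+\alpha_1f_u+\beta_1f_{vv},
\end{equation*}
so the $f_{vv}$-component is $3\beta\,c_{1,v}(0)+\beta_1$. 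A quadratic-in-$v$ correction to $c_1$ leaves $c_{1,v}(0)=0$, and a linear-in-$v$ term is forbidden by the $SB$-$2$-adaptivity requirement $\bar\xi\bar\eta f(0)=0$ unless $\beta=0$ (in which case it does not help anyway). Hence the coefficient $\beta_1$ can never be cancelled under your ansatz whenever $\beta_1\ne0$. The correct analogue of the $S$-type construction \eqref{eq:fors3vf} is to correct the $\partial_v$-coefficient of $\bar\eta$ itself: the paper keeps $a_1=1-\alpha v$, $b_1=-\beta$ and takes $c_1=-\alpha u-\alpha_1v^2/2$, $d_1=1-\beta_1v/3$, the $d_1$-term being exactly what removes the $f_{vv}$-component. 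Your analogy mapped the role of $b_1$ (the $\partial_v$-part of $\bar\xi$, which killed the $\beta_1$-term in the $S$-case) to the wrong coefficient.
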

\begin{proof}
(1)
By the assumption, there exists an $SB$-$2$-adapted coordinate system $(x,y)$.
Since $f$ is of $B$-type, 
$\det(f_x, f_{yy},f_{yyy})(0)=0$ holds, we set
$f_{yyy}=\alpha f_x+\beta f_{yy}$ at $0$.
Setting a coordinate system $(u,v)$ by
$x=u - \alpha v^3/6, y= v - \beta v^2/6$
then by Lemma \ref{lem:2adaptedcond}, $(u,v)$ is of $SB$-$2$-adapted,
and differentiating
$f(u - \alpha v^3/6, v - \beta v^2/6)$, we see the $(u,v)$ is a desired
coordinate system.
(2)
Let $(\xi,\eta)$ be an adapted vector fields.
We take a coordinate system $(u,v)$ satisfying
$\xi=\partial_u,\eta=\partial_v$ at $0$.
We set an $SB$-adapted vector fields $(\tilde \xi,\tilde \eta)$
by \eqref{eq:sbadvf} with \eqref{eq:sbadvf2},
where $f_{uv}=\alpha f_u+\beta f_{vv}$ at $0$.
Since $f$ is of $B$-type, $\tilde\eta^3 f
=\alpha_1 \tilde\xi f+\beta_1\tilde\eta^2f$ at $0$ holds.
Setting 
$$\bar\xi=a_1\partial_u+b_1\partial_v,\quad
\bar \eta=c_1\partial_u+d_1\partial_v,$$
where 
$$
a_1=1-\alpha v,\ 
b_1=-\beta,\ 
c_1=-\alpha u-\alpha_1v^2/2,\ 
d_1=1-\beta_1v/3,
$$
we see
$$
\bar\xi\bar\eta f=
\bar\eta\bar\xi f=
\bar\eta^3 f=
0
$$
at $0$.
This proves the assertion.
\end{proof}
\begin{definition}
An $SB$-$2$-adapted coordinate system $(u,v)$ is said to be
$B$-$3$-{\it adapted\/} if
$f_{vvv}(0)=0$ holds.
An $SB$-$2$-adapted vector fields $(\xi,\eta)$ is said to be
$B$-$3$-{\it adapted\/} if
$\eta^3f(0)=0$ holds.
\end{definition}
We see
the existence of $B$-$3$-adapted vector fields
implies $B$-$3$-adapted coordinate system by Lemma \ref{lem:vfexist02}.
\begin{lemma}
Let\/ $f$ be of\/ $S$-type.
$(1)$ Let\/ $(u,v)$ be an\/ $B$-$3$-adapted coordinate system.
An\/ $SB$-$2$ adapted coordinate system\/ $(x,y)=(x(u,v),y(u,v))$ 
$($i.e., $x_v=0, x_{uv}=0,y_u=0$ hold\/$)$
is of\/ $B$-$3$-adapted if and only if
\begin{equation}\label{eq:2adaptedcondcoord}
x_{vvv}=0,\ y_{vv}=0
\end{equation}
$(2)$ Let\/ $(\xi,\eta)$ be a\/ $B$-$3$-adapted vector fields.
An\/ $SB$-$2$ adapted vector fields\/ $(\tilde\xi,\tilde\eta)$ where\/
$\tilde \xi=a\xi+b\eta$,
$\tilde \eta=c\xi+d\eta$ $($i.e., 
$c=0,\eta a=0,b=0,\xi c=0$ hold\/$)$
is of\/ $S$-$3$-adapted if and only if
\begin{equation}\label{eq:2adaptedcond}
\eta^2c=0,\ \eta d=0.
\end{equation}
\end{lemma}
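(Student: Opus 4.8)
The plan is to prove both parts by a single direct Leibniz and chain-rule computation, in the style of the proof of Lemma~\ref{lem:2adaptedcond} and its successors. As there, it is enough to settle the vector-field case $(2)$: part $(1)$ is then recovered by taking $(\xi,\eta)=(\partial_u,\partial_v)$ and $(\tilde\xi,\tilde\eta)=(\partial_x,\partial_y)$, so that $c,d$ are the components of the vector field $\partial_y$ in the $(\partial_u,\partial_v)$-frame, and rewriting the resulting conditions $\eta^2c(0)=0$, $\eta d(0)=0$ in terms of the $3$-jet of $(x(u,v),y(u,v))$. The latter is a routine inverse-function-theorem manipulation which, using $x_v(0)=x_{uv}(0)=y_u(0)=0$ (valid since $(x,y)$ is $SB$-$2$-adapted, Lemma~\ref{lem:2adaptedcond}$(1)$), produces exactly the conditions $x_{vvv}(0)=0$, $y_{vv}(0)=0$ of \eqref{eq:2adaptedcondcoord}.

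For $(2)$, write $\tilde\eta=c\xi+d\eta$. First I would record the relations available at $0$: the $SB$-$2$-adaptivity of $(\tilde\xi,\tilde\eta)$ gives $c(0)=0$ and $\xi c(0)=0$ (Lemma~\ref{lem:2adaptedcond}$(2)$), with $d(0)\ne0$; the adaptivity, $SB$-$2$-adaptivity and $B$-$3$-adaptivity of $(\xi,\eta)$ give $\eta f(0)=0$, $\xi\eta f(0)=\eta\xi f(0)=0$ and $\eta^3f(0)=0$. Then I would expand $\tilde\eta^3f=\tilde\eta\bigl(\tilde\eta(\tilde\eta f)\bigr)$ by the Leibniz rule and evaluate at $0$: every monomial retaining a factor of $c$, of $\xi c$, or of one of the listed vanishing derivatives of $f$ drops out, and what remains collapses to
\[
\tilde\eta^3f(0)=(\tilde\eta^2c)(0)\,\xi f(0)+3\,d(0)\,(\tilde\eta d)(0)\,\eta^2f(0).
\]
A second, shorter expansion, again using $c(0)=\xi c(0)=0$, gives $(\tilde\eta d)(0)=d(0)\,\eta d(0)$ and $(\tilde\eta^2c)(0)=d(0)\,\eta d(0)\,\eta c(0)+d(0)^2\,\eta^2c(0)$. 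Since $f$ is of $SB$-type, $\xi f(0)$ and $\eta^2f(0)$ are linearly independent, so $\tilde\eta^3f(0)=0$ forces the $\eta^2f(0)$-coefficient $3d(0)^2\eta d(0)$ to vanish, that is $\eta d(0)=0$; the coefficient of $\xi f(0)$ then reduces to $d(0)^2\eta^2c(0)$, so $\eta^2c(0)=0$ as well. The converse is immediate, so $(\tilde\xi,\tilde\eta)$ is $B$-$3$-adapted if and only if \eqref{eq:2adaptedcond} holds.

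The computation itself is routine; the only delicate point is the bookkeeping in the Leibniz expansion of $\tilde\eta^3f$. One must verify that the only second-order derivative of $f$ surviving at $0$ is $\eta^2f$, the $\xi^2f$ term and the mixed $\xi\eta f$, $\eta\xi f$ terms all carrying a factor $c(0)=0$ or $\xi\eta f(0)=\eta\xi f(0)=0$, and one must notice that the stray cross term $d(0)\,\eta d(0)\,\eta c(0)\,\xi f(0)$ produced inside the coefficient of $\xi f(0)$ is harmless, since it is killed as soon as $\eta d(0)=0$ has been imposed. I would sanity-check the displayed formula on the model $\tilde\eta=v\,\partial_u+\partial_v$, for which a direct expansion gives $\tilde\eta^3f(0)=3f_{uv}(0)+f_{vvv}(0)$, in agreement with both sides vanishing under the standing hypotheses. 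The jet-translation needed to deduce part $(1)$ from $(2)$ involves no new idea.
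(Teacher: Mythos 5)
Your proof is correct and follows essentially the same route as the paper: a direct Leibniz expansion of $\tilde\eta^3f$ at the origin using $c(0)=\xi c(0)=0$ and the adaptedness of $(\xi,\eta)$, followed by reading off coefficients against the linearly independent vectors $\xi f(0)$ and $\eta^2f(0)$, with part $(1)$ obtained from the corresponding chain-rule computation. (Your coefficient $3d(0)^2\eta d(0)$ in front of $\eta^2f(0)$ is in fact the correct one; the paper's displayed formula, with its factor $2d\eta d$ and the evident misprint $d\eta^2 f$ for $d\eta^2 c$, contains slips that do not affect the conclusion.)
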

\begin{proof}
(1) Differentiating $f(x(u,v),y(u,v))$, we see the assertion.
(2)
Calculating $\tilde \eta^3f$, then we have
$$
\tilde \eta^3f
=
(\eta c\eta d+d\eta^2 f)\xi f
+
2d\eta d\eta^2f.
$$
This shows the assertion.
\end{proof}
\begin{lemma}\label{lem:btypeindep}
Let\/ $f$ be of\/ $B$-type.
$(1)$ For a\/ $B$-$3$-adapted coordinate system\/ $(u,v)$,
the condition
\begin{equation}\label{eq:b2cricoord}
-5\det(f_u,f_{vv},f_{uvvv})^2
+
3\det(f_u,f_{vv},f_{uuv})
 \det(f_u,f_{vv},f_{vvvvv})\ne0
\end{equation}
does not depend on the choice of\/
$B$-$3$-adapted vector fields and diffeomorphism on the target space.
$(2)$ For a\/ $B$-$3$-adapted vector fields\/ $(\xi,\eta)$,
the condition
\begin{equation}\label{eq:b2cri}
-5\det(\xi f,\eta^2f,\xi\eta^3f)^2
+
3\det(\xi f,\eta^2f,\xi^2\eta f)
 \det(\xi f,\eta^2f,\eta^5f)\ne0
\end{equation}
does not depend on the choice of\/
$S$-$3$-adapted vector fields and diffeomorphism on the target space.
Moreover, in the above determinants, one can use any of\/
$\eta\xi\eta^2f$, $\eta^2\xi\eta f$ or\/ $\eta^3\xi f$ instead of\/ $\xi\eta^3f$,
and also one can use any of\/
$\eta\xi^2 f$ or\/ $\xi\eta\xi\eta f$ instead of\/ $\xi^2\eta f$.
\end{lemma}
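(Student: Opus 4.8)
\smallskip
\noindent
The plan is as follows. As in the earlier lemmas it is enough to prove the vector field version $(2)$: taking $(\xi,\eta)=(\partial_u,\partial_v)$ for a $B$-$3$-adapted coordinate system recovers $(1)$ (two $B$-$3$-adapted coordinate systems give two $B$-$3$-adapted vector fields), and for coordinate vector fields partial derivatives commute, so the coordinate form of the interchangeability statement is automatic. Independence under a diffeomorphism $\Phi$ of the target I would deduce from Lemma~\ref{lem:target}: each of the three determinants $\det(\xi f,\eta^2f,\xi^2\eta f)$, $\det(\xi f,\eta^2f,\xi\eta^3f)$, $\det(\xi f,\eta^2f,\eta^5f)$ is multiplied by $\det d\Phi_0$ when $f$ is replaced by $\Phi\circ f$, so the left-hand side of \eqref{eq:b2cri} is multiplied by $(\det d\Phi_0)^2>0$; in particular its vanishing and its sign are preserved.

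The core is independence under the choice of $B$-$3$-adapted vector fields. I would fix a $B$-$3$-adapted coordinate system $(u,v)$ (which exists by Lemma~\ref{lem:vfexist02}), so that $f_v=f_{uv}=f_{vvv}=0$ at $0$, and write an arbitrary $B$-$3$-adapted vector fields as $\xi=a\partial_u+b\partial_v$, $\eta=c\partial_u+d\partial_v$, with $b(0)=c(0)=0$, $a_v(0)=c_u(0)=0$, $c_{vv}(0)=d_v(0)=0$ and $a(0)d(0)\ne0$. Writing $a_0=a(0)$, $d_0=d(0)$ and $s=c_v(0)$, the plan is to expand the five relevant derivatives at $0$ modulo $\spano{f_u,f_{vv}}$; I expect
\begin{align*}
\xi f(0)&=a_0f_u(0),\qquad \eta^2f(0)\equiv d_0^2f_{vv}(0),\qquad
\xi^2\eta f(0)\equiv a_0^2d_0\,f_{uuv}(0),\\
\xi\eta^3f(0)&\equiv a_0d_0^3\,f_{uvvv}(0)+3a_0d_0^2s\,f_{uuv}(0),\\
\eta^5f(0)&\equiv d_0^5\,f_{vvvvv}(0)+10\,d_0^4s\,f_{uvvv}(0)+15\,d_0^3s^2\,f_{uuv}(0).
\end{align*}
The real content is that every other derivative component ($f_{uu}$, $f_{uuu}$, $f_{uvv}$, $f_{vvvv}$, $\dots$) drops out at $0$, which is exactly where the conditions $c_u(0)=0$, $c_{vv}(0)=0$, $d_v(0)=0$ of $B$-$3$-adaptivity are used; a convenient bookkeeping device for $\eta^kf$ is the recursion $A^{(k+1)}_{ij}=\eta(A^{(k)}_{ij})+cA^{(k)}_{i-1,j}+dA^{(k)}_{i,j-1}$ for the coefficient $A^{(k)}_{ij}$ of $f_{u^iv^j}$.

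Substituting these congruences, and writing $P=\det(f_u,f_{vv},f_{uuv})(0)$, $Q=\det(f_u,f_{vv},f_{uvvv})(0)$, $R=\det(f_u,f_{vv},f_{vvvvv})(0)$, I would obtain $\det(\xi f,\eta^2f,\xi^2\eta f)(0)=a_0^3d_0^3P$, $\det(\xi f,\eta^2f,\xi\eta^3f)(0)=a_0^2d_0^5Q+3a_0^2d_0^4sP$, $\det(\xi f,\eta^2f,\eta^5f)(0)=a_0d_0^7R+10a_0d_0^6sQ+15a_0d_0^5s^2P$, and hence the left-hand side of \eqref{eq:b2cri} equals
$$
-5\bigl(a_0^2d_0^5Q+3a_0^2d_0^4sP\bigr)^2+3\bigl(a_0^3d_0^3P\bigr)\bigl(a_0d_0^7R+10a_0d_0^6sQ+15a_0d_0^5s^2P\bigr)=a_0^4d_0^{10}\bigl(-5Q^2+3PR\bigr).
$$
All $s$-dependent terms cancel and $a_0^4d_0^{10}>0$; this gives both the independence of the choice of $B$-$3$-adapted vector fields and, the factor being positive, the fact that the sign of \eqref{eq:b2cri} is well defined, which is what pins down the $\pm$ of $B_2^\pm$ in Theorem~\ref{thm:cri}. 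For the interchangeability of the third columns I would use $\xi\eta g-\eta\xi g=[\xi,\eta]g=p\,\xi g+q\,\eta g$, observe that $SB$-$2$-adaptivity forces the $\xi$-component $p$ of $[\xi,\eta]$ to vanish at $0$, and combine this with $\eta f(0)=\xi\eta f(0)=\eta\xi f(0)=\eta^3f(0)=0$ to conclude that $\xi\eta^3f-\eta\xi\eta^2f$, $\xi\eta^3f-\eta^2\xi\eta f$, $\xi\eta^3f-\eta^3\xi f$, as well as $\xi^2\eta f-\eta\xi^2f$ and $\xi^2\eta f-\xi\eta\xi f$, all lie in $\spano{f_u,f_{vv}}$ at $0$, hence leave the determinants unchanged.

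The main obstacle I anticipate is the fifth-order expansion of $\eta^5f$ modulo $\spano{f_u,f_{vv}}$: one has to verify that all derivative components other than $f_{uuv}$, $f_{uvvv}$, $f_{vvvvv}$ vanish at $0$ and to extract correctly the coefficients $d_0^5$, $10d_0^4s$, $15d_0^3s^2$ (the analogous computation for $\xi\eta^3f$ being considerably shorter). Granting this, the cancellation in \eqref{eq:b2cri} — and thus the appearance of precisely the coefficients $-5$ and $3$ — is a short algebraic check.
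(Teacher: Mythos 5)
Your proposal is correct and follows essentially the same route as the paper: fix a $B$-$3$-adapted coordinate system, expand $\xi f,\eta^2f,\xi^2\eta f,\xi\eta^3f,\eta^5f$ at $0$ modulo $\spano{f_u,f_{vv}}$, observe that the $c_v$-dependent terms cancel in the combination $-5(\cdot)^2+3(\cdot)(\cdot)$ leaving a positive prefactor, handle the interchangeability via $[\xi,\eta]$ and $SB$-$2$-adaptivity, and defer target diffeomorphisms to Lemma~\ref{lem:target}. Your intermediate coefficients ($a_0^2d_0$ for $f_{uuv}$ in $\xi^2\eta f$, $a_0d_0^3$ and $3a_0d_0^2s$ in $\xi\eta^3f$) are in fact the internally consistent ones — the paper's displayed formulas carry spurious extra factors of $d$ that would spoil the cancellation if taken literally — and your explicit prefactor $a_0^4d_0^{10}>0$ cleanly justifies the sign statement used for $B_2^\pm$.
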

\begin{proof}
It is enough to show the case of vector fields.
By the existence of $B$-$3$-adapted vector fields,
there exists a $B$-$3$-adapted coordinate system $(u,v)$.
We set
$$
\xi=a\partial_u+b\partial_v,\quad
\eta=c\partial_u+d\partial_v,$$
where
$$
a_v=0,\ b=0,\ c_u=0,\ 
c_{vv}=0,\ d_v=0.
$$
Then $(\xi,\eta)$ is of $B$-$3$-adapted.
Then we have
\begin{align*}
&\xi^2\eta f=a^2ddf_{uuv}+*f_u+*f_{vv},\\
&\xi\eta^3f=
ad^3(df_{uvvv}+3c_vf_{uuv})+*f_u+*f_{vv},
\end{align*}
and
\begin{align*}
&\xi^2\eta f-\xi\eta\xi f,\quad
\xi\eta\xi f-\eta^2\xi f\in\langle f_u,f_{vv}\rangle_{\R},\\
&\xi\eta^3f-\eta\xi\eta^2f,\quad
\eta\xi\eta^2f-\eta^2\xi\eta f,\quad
\eta^2\xi\eta f-\eta^3\xi f\in\langle f_u,f_{vv}\rangle_{\R}
\end{align*}
at $0$. Thus we see
$$
\eta^5f
=
d^3(d^2f_{vvvvv}+10 d c_v f_{uvvv} + 15 c_v^2 f_{uuv})+*f_u+*f_{vv}.
$$
Substituting these formula into
\eqref{eq:b2cri}, the left hand side of \eqref{eq:b2cri} is
$$
a^2d^6\big(-5\det(f_u,f_{vv},f_{uvvv})^2+3\det(f_u,f_{vv},f_{vvvvv})
\det(f_u,f_{vv},f_{uuv})\big).
$$
This shows the assertion.
For the independence of target diffeomorphisms, 
see Lemma \ref{lem:target}.
\end{proof}

\subsection{The condition for\/ $H_2$ singularity}
Let $f$ be of $HP$-type, namely, we assume for an 
adapted coordinate system $(u,v)$, it holds that
$f_u\times f_{vv}=0$, $f_u\times f_{uv}\ne0$ at $0$,
or an adapted vector fields $(\xi,\eta)$, it holds that
$\xi f\times \eta^2 f=0$,
$\xi f\times \xi\eta f\ne0$ at $0$.

\begin{lemma}
Let us assume\/ $f$ is of\/ $HP$-type.
$(1)$ There exists an adapted coordinate system\/ $(u,v)$ such that\/
$f_{vv}(0)=0$.
$(2)$ There exists an adapted vector fields\/ $(\xi,\eta)$ such that\/
$\eta^2f(0)=0$.
\end{lemma}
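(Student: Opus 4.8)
The plan is to use that being of $HP$-type forces $\xi f(0)$ and $\eta^2 f(0)$ (resp.\ $f_x(0)$ and $f_{yy}(0)$) to be linearly dependent, and then to cancel the component of $\eta^2 f$ along $\xi f$ by a quadratic modification of the coordinates (resp.\ of the frame), in direct analogy with Lemmas~\ref{lem:vfexist01} and~\ref{lem:vfexist02}. As a first step I would record that, since $\rank df_0=1$ and $\eta f(0)=0$ (resp.\ $f_y(0)=0$), the image of $df_0$ is the line spanned by $\xi f(0)\ne0$ (resp.\ $f_x(0)\ne0$); hence the vanishing of $\xi f\times\eta^2 f$ at $0$ means there is a unique $\lambda\in\R$ with $\eta^2 f(0)=\lambda\,\xi f(0)$ (resp.\ $f_{yy}(0)=\lambda f_x(0)$).

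For (1), I would start from an adapted coordinate system $(x,y)$, so that $f_y(0)=0$ and $f_{yy}(0)=\lambda f_x(0)$, and perform the coordinate change $x=u-\tfrac{\lambda}{2}v^2$, $y=v$. Since $x_v(0)=0$, this change is adapted. Differentiating $f(u-\tfrac{\lambda}{2}v^2,v)$ twice with respect to $v$ and evaluating at $0$, every term carrying a positive power of $v$ disappears, and what remains is $f_{vv}(0)=-\lambda f_x(0)+f_{yy}(0)=0$, which is the assertion.

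For (2), I would take a coordinate system $(u,v)$ with $\xi=\partial_u$ and $\eta=\partial_v$ at $0$; then $(\partial_u,\partial_v)$ is an adapted vector fields, and since the $HP$-type condition does not depend on the choice of adapted vector fields we get $f_u\times f_{vv}=0$ and $f_u\times f_{uv}\ne0$ at $0$, so $f_{vv}(0)=\lambda f_u(0)$ for some $\lambda$. Set $\tilde\xi=\partial_u$ and $\tilde\eta=-\lambda v\,\partial_u+\partial_v$; the $\partial_u$-coefficient of $\tilde\eta$ vanishes at $0$, so $(\tilde\xi,\tilde\eta)$ is an adapted vector fields. Writing $\tilde\eta=c\partial_u+d\partial_v$ with $c=-\lambda v$, $d=1$, and using $\tilde\eta^2 f=c(c_uf_u+cf_{uu}+d_uf_v+df_{uv})+d(c_vf_u+cf_{uv}+d_vf_v+df_{vv})$, all terms with a factor $c$ or $f_v$ vanish at $0$ (recall $f_v(0)=0$), leaving $\tilde\eta^2 f(0)=c_v(0)f_u(0)+f_{vv}(0)=(-\lambda+\lambda)f_u(0)=0$. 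One may also note that (1) trivially implies (2), but, as in Lemma~\ref{lem:vfexist01}, I would keep the explicit frame since it will be used later.

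I do not expect a genuine obstacle here: the content is a ``completion of the square'' entirely parallel to the earlier existence lemmas. The only points needing care are (i) confirming that the quadratic changes preserve adaptedness, which is immediate from the criteria $x_v(0)=0$ for coordinates and ``the $\partial_u$-coefficient vanishes at $0$'' for frames, and (ii) correctly discarding, at the origin, the second-derivative terms that carry a factor $v$, $f_v(0)$, or $c(0)$; both are routine bookkeeping.
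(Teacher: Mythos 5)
Your proposal is correct and follows essentially the same route as the paper: for (1) the quadratic change $x=u-\lambda v^2/2$, $y=v$ (the paper's $\alpha$ is your $\lambda$), and for (2) the frame $\tilde\eta=-\lambda v\,\partial_u+\partial_v$, which is exactly the paper's choice \eqref{eq:h2eta}. The extra bookkeeping you supply (linear dependence of $\eta^2f(0)$ on $\xi f(0)$, preservation of adaptedness, discarding terms carrying $v$, $c(0)$, or $f_v(0)$) matches what the paper leaves implicit.
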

\begin{proof}
Let $(x,y)$ be an adapted coordinate system.
(1) By the assumption, we set $f_{yy}=\alpha f_x$ at $0$.
Setting
$x=u-\alpha v^2/2$, $y=v$ and
differentiating
$f(u-\alpha v^2/2,v)$ twice by $v$, we have
$
f_{yy}-\alpha f_x.
$
This shows the assertion.
$(2)$
By the assumption, we set 
$f_{yy}=\alpha f_x$ at $0$.
Setting 
$\xi=\partial_x$,
\begin{equation}\label{eq:h2eta}
\eta=-\alpha y \partial_x+\partial_y,
\end{equation} we see
$\eta^2f=0$ at $0$.
\end{proof}
\begin{definition}
An adapted coordinate system $(u,v)$ is of $H$-$2$-{\it adapted\/}
if $f_{vv}(0)=0$.
An adapted vector fields $(\xi,\eta)$ is of $H$-$2$-{\it adapted\/}
if $\eta^2f(0)=0$.
\end{definition}
Since the condition for an $H$-$2$-adapted coordinate system
does not depend on the non-zero functional multiplicity,
the existence of an
$H$-$2$-adapted vector fields implies the existence of
an $H$-$2$-adapted coordinate system.

\begin{lemma}\label{lem:h22adaptedcond}
Let\/ $f$ be of\/ $HP$-type.
$(1)$ Let\/ $(u,v)$ be an\/ $H$-$2$-adapted coordinate system.
An adapted coordinate system\/ $(x,y)=(x(u,v),y(u,v))$ is of\/ $H$-$2$-adapted
if and only if\/ $x_{vv}(0)=0$.
$(2)$ 
Let\/ $(\xi,\eta)$ be an\/ $H$-$2$-adapted vector fields.
An adapted vector fields\/
$\tilde \xi=a\xi+b\eta$,
$\tilde \eta=c\xi+d\eta$ $($namely, $c(0)=0)$
is of\/ $H$-$2$-adapted if and only if\/ $\eta c(0)=0$.
\end{lemma}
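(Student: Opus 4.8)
The plan is to prove both statements by the direct substitution-and-differentiation technique already used in the proof of Lemma~\ref{lem:2adaptedcond}: insert the change of frame (resp.\ of coordinates), expand by the Leibniz/chain rule, and discard every term annihilated by the adaptivity of the two systems together with the $H$-$2$-adaptivity of the base one. What survives is a single term which is a nonzero scalar multiple of $\xi f(0)$ (resp.\ $f_u(0)$), and since that vector is nonzero (by $\rank df_0=1$) the term vanishes precisely when the asserted scalar vanishes.

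I would treat (2) first. Write $\tilde\xi=a\xi+b\eta$, $\tilde\eta=c\xi+d\eta$ with $c(0)=0$; since $(\tilde\xi,\tilde\eta)$ is a linearly independent pair, $\pmt{a&b\\c&d}$ is invertible at $0$, so $c(0)=0$ forces $d(0)\ne0$. Expanding,
\[
\tilde\eta^2 f=(\tilde\eta c)\,\xi f+c\,(c\,\xi^2 f+d\,\eta\xi f)+(\tilde\eta d)\,\eta f+d\,(c\,\xi\eta f+d\,\eta^2 f).
\]
Evaluating at $0$ and using $c(0)=0$, $\eta f(0)=0$ (adaptivity of $(\xi,\eta)$) and $\eta^2 f(0)=0$ ($H$-$2$-adaptivity of $(\xi,\eta)$), only the term $(\tilde\eta c)(0)\,\xi f(0)$ remains; and since $\tilde\eta c=c\,\xi c+d\,\eta c$ we get $(\tilde\eta c)(0)=d(0)\,(\eta c)(0)$, hence
\[
\tilde\eta^2 f(0)=d(0)\,(\eta c)(0)\,\xi f(0).
\]
Because $\rank df_0=1$ and $\eta$ spans $\ker df_0$, the vector $\xi f(0)$ is nonzero; with $d(0)\ne0$ this gives $\tilde\eta^2 f(0)=0$ if and only if $(\eta c)(0)=0$, which is (2).

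For (1), the quickest route is to read it off (2) applied to $\xi=\partial_u$, $\eta=\partial_v$ and the coordinate vector fields $\tilde\xi=\partial_x$, $\tilde\eta=\partial_y$ of the new chart. Here $\tilde\eta^2 f(0)$ is exactly the second $y$-derivative at $0$ of the $(x,y)$-representation of $f$, so $\tilde\eta^2 f(0)=0$ is precisely the $H$-$2$-adaptivity of $(x,y)$; and the coefficient $c$ of $\partial_u$ in $\partial_y=c\,\partial_u+d\,\partial_v$ equals $-x_v/J$ with $J=x_u y_v-x_v y_u$ the Jacobian determinant of $(u,v)\mapsto(x,y)$, so using $x_v(0)=0$ one finds $(\eta c)(0)=-x_{vv}(0)/J(0)$ with $J(0)\ne0$. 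Thus $(\eta c)(0)=0\iff x_{vv}(0)=0$, and (2) converts this into (1). Alternatively, (1) can be proved directly as in Lemma~\ref{lem:2adaptedcond}(1): differentiate $f(x(u,v),y(u,v))$ twice in $v$, use $x_v(0)=0$, $f_v(0)=0$ and $f_{vv}(0)=0$, and observe that the result equals $f_u(0)\,x_{vv}(0)$, which vanishes iff $x_{vv}(0)=0$.

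I do not foresee a genuine obstacle: the computation is short, and the only point requiring care is the bookkeeping of which factors are a priori nonzero — $\xi f(0)$ and $f_u(0)$ by $\rank df_0=1$, and $d(0)$ and $J(0)$ by invertibility of the change of frame, resp.\ of coordinates — so that a vanishing product really forces the remaining factor to be zero.
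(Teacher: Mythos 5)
Your proposal is correct and follows the same route the paper intends: the paper's proof is just ``by a direct calculation,'' and your expansion of $\tilde\eta^2 f$ at $0$ (reducing to $d(0)\,(\eta c)(0)\,\xi f(0)$ with $d(0)\ne0$ and $\xi f(0)\ne0$), together with the identification $c=-x_v/J$ for the coordinate case, is exactly that calculation carried out.
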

\begin{proof}
By a direct calculation, one can see the assertion.
\end{proof}
\begin{lemma}
Let\/ $f$ be of\/ $HP$-type.
$(1)$ For a\/ $H$-$2$-adapted coordinate system\/ $(u,v)$,
the condition\/
$\det(f_u,f_{uv},f_{vvv})(0)\ne0$ does not depend on the choice of\/
$H$-$2$-adapted vector fields and diffeomorphism on the target space.
$(2)$ For an\/ $H$-$2$-adapted vector fields\/ $(\xi,\eta)$,
the condition\/
$\det(\xi f,\xi\eta f,\eta^3f)(0)=0$
does not depend on the choice of\/
$H$-$2$-adapted vector fields and diffeomorphism on the target space.
\end{lemma}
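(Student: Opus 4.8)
The plan is to follow the template already used for the analogous independence lemmas in this section (Lemmas \ref{lem:sb2yyy}, \ref{lem:sb2xyy} and \ref{lem:btypeindep}). It suffices to prove part $(2)$, the vector-field statement, because the existence of an $H$-$2$-adapted vector fields implies the existence of an $H$-$2$-adapted coordinate system and a coordinate system is in particular a pair of vector fields; part $(1)$ then follows by applying $(2)$ to the coordinate vector fields $(\partial_u,\partial_v)$ of an $H$-$2$-adapted coordinate system. So I would fix such a coordinate system $(u,v)$, at which $f_v(0)=f_{vv}(0)=0$ and, by the $HP$-type hypothesis, $f_u(0)$ and $f_{uv}(0)$ are linearly independent in $\R^3$.

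Next I would take an arbitrary $H$-$2$-adapted vector fields $(\xi,\eta)$ and write $\xi=a\partial_u+b\partial_v$, $\eta=c\partial_u+d\partial_v$. Adaptivity of $(\xi,\eta)$ forces $c(0)=0$ and $a(0)d(0)\ne0$, and $H$-$2$-adaptivity forces $\eta c(0)=0$, hence $c_v(0)=0$, by Lemma \ref{lem:h22adaptedcond}. I would then evaluate $\xi f$, $\xi\eta f$ and $\eta^3 f$ at $0$: using $c(0)=c_v(0)=0$ together with $f_v(0)=f_{vv}(0)=0$ one gets $\xi f(0)=a(0)f_u(0)$, $\xi\eta f(0)\equiv a(0)d(0)\,f_{uv}(0)$ modulo $f_u$, and $\eta^3 f(0)\equiv d(0)^3 f_{vvv}(0)$ modulo $\langle f_u,f_{uv}\rangle_{\R}$. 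Multilinearity of the determinant and elementary column operations then give
$$
\det(\xi f,\xi\eta f,\eta^3 f)(0)=a(0)^2 d(0)^4\,\det(f_u,f_{uv},f_{vvv})(0),
$$
and since $a(0)d(0)\ne0$ the stated condition on $\det(\xi f,\xi\eta f,\eta^3 f)(0)$ is equivalent to the corresponding condition on $\det(f_u,f_{uv},f_{vvv})(0)$, hence independent of the choice of $H$-$2$-adapted vector fields, and likewise of $H$-$2$-adapted coordinate system. Independence under diffeomorphisms on the target space is cited from Lemma \ref{lem:target}, exactly as in the earlier proofs.

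The step I expect to be the main obstacle is the expansion of $\eta^3 f$ at $0$. A priori this iterated derivative involves $f_u$, $f_v$, $f_{uu}$, $f_{uv}$, $f_{vv}$, $f_{uuu}$, $f_{uuv}$, $f_{uvv}$ and $f_{vvv}$, and one has to check that every monomial other than the one producing $d^3f_{vvv}$ either carries a factor of $c$, of $c_v$ or of $\eta c$ (all vanishing at $0$), or is a multiple of $f_v$ or of $f_{vv}$ (both vanishing at $0$), or else lies in $\langle f_u\rangle_{\R}$. The delicate point is the coefficient of $f_{uv}$: writing $\eta^2 f=(\eta c)f_u+c^2f_{uu}+2cd\,f_{uv}+(\eta d)f_v+d^2f_{vv}$, the $f_{uv}$-coefficient of $\eta^3 f$ is $\eta(2cd)=2(\eta c)d+2c(\eta d)$, which vanishes at $0$; this is the cancellation that makes $f_{vvv}$ come out cleanly. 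The rest is finite, routine bookkeeping, as in Lemmas \ref{lem:sb2yyy}--\ref{lem:btypeindep}.
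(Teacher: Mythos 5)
Your proposal is correct and follows essentially the same route as the paper: reduce to the vector-field case over an $H$-$2$-adapted coordinate system, use $c(0)=\eta c(0)=0$ to get $\eta^3f(0)\equiv d^3f_{vvv}(0)$ modulo the span of the other two columns, and conclude $\det(\xi f,\xi\eta f,\eta^3f)(0)=a^2d^4\det(f_u,f_{uv},f_{vvv})(0)$, citing Lemma \ref{lem:target} for target diffeomorphisms. Your explicit verification that the $f_{uv}$-coefficient of $\eta^3f$ vanishes at $0$ is the ``direct calculation'' the paper leaves implicit.
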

\begin{proof}
It is enough to show the case of vector fields.
By the existence of $H$-$2$-adapted vector fields,
there exists an $H$-$2$-adapted coordinate system $(u,v)$.
We set $\xi=a\partial_u+b\partial_v$,
$\eta=c\partial_u+d\partial_v$ where $c=\eta c=0$ at $0$
Then $(\xi,\eta)$ is an $H$-$2$-adapted vector fields
by Lemma \ref{lem:h22adaptedcond}.
By a direct calculation, we see
$\eta^3f(0)=d^3f_{vvv}(0)$ and this shows the assertion.
For the independence of target diffeomorphisms, 
see Lemma \ref{lem:target}.
\end{proof}
\begin{definition}
Let $f$ be an $HP$-type map-germ.
The germ $f$ is of $H$-{\it type\/}
if $\det(\xi f,\xi\eta f,\eta^3f)(0)\ne0$ holds
for an $H$-$2$-adapted vector fields.
\end{definition}
\begin{lemma}\label{lem:h4vfexist}
Let us assume\/ $f$ is of\/ $H$-type.
$(1)$ There exists an\/ $H$-$2$-adapted coordinate system\/ $(u,v)$
such that\/ $f_{vvvv}(0)=0$.
(2) 
There exists an\/ $H$-$2$-adapted vector fields\/ $(\xi,\eta)$
such that\/ $\eta^4f(0)=0$.
\end{lemma}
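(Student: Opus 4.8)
The plan is to mimic the normalization arguments used for the $S_2$ and $B_2$ cases. Since $f$ is of $H$-type it is of $HP$-type, so there is an $H$-$2$-adapted coordinate system, which I denote $(x,y)$; thus $f_{yy}(0)=0$, and the $H$-type hypothesis, written for this coordinate system, reads $\det(f_x,f_{xy},f_{yyy})(0)\ne0$. Hence $\{f_x(0),f_{xy}(0),f_{yyy}(0)\}$ is a basis of $\R^3$, and there are unique scalars $\alpha,\beta,\gamma$ with $f_{yyyy}=\alpha f_x+\beta f_{xy}+\gamma f_{yyy}$ at $0$. For $(1)$ I will absorb these coefficients by a coordinate change whose only nonvanishing pure-$v$ Taylor coefficients are a $v^{3}$ and a $v^{4}$ term in the first component and a $v^{2}$ term in the second: this is the freedom compatible with the normalization $x_v(0)=x_{vv}(0)=0$ that keeps the system $H$-$2$-adapted by Lemma~\ref{lem:h22adaptedcond}, and it is exactly enough to reach each of the three basis vectors.

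Concretely, set
\[
x=u-\tfrac{\beta}{24}\,v^{3}-\tfrac{\alpha}{24}\,v^{4},\qquad
y=v-\tfrac{\gamma}{12}\,v^{2},
\]
which is a local diffeomorphism fixing $0$ with $x_v(0)=x_{vv}(0)=0$, hence $(u,v)$ is again $H$-$2$-adapted. It then remains to compute $\tilde f_{vvvv}(0)$ for $\tilde f(u,v):=f(x(u,v),y(u,v))$. Expanding by the chain rule and using $x_v(0)=0$, $x_{vv}(0)=0$, $f_y(0)=0$ and $f_{yy}(0)=0$, every summand carrying a factor $x_v$, $f_y$, or $f_{yy}$ vanishes at $0$, and the surviving terms collapse to
\[
\tilde f_{vvvv}(0)=f_{yyyy}(0)+6\,f_{yyy}(0)\,y_{vv}(0)+4\,f_{xy}(0)\,x_{vvv}(0)+f_x(0)\,x_{vvvv}(0).
\]
Since $x_{vvv}(0)=-\beta/4$, $x_{vvvv}(0)=-\alpha$ and $y_{vv}(0)=-\gamma/6$, substituting the formula for $f_{yyyy}(0)$ makes the right-hand side zero; this proves $(1)$.

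For $(2)$ I would run the parallel construction on the source. Since $f$ is of $HP$-type there is an $H$-$2$-adapted coordinate system $(u,v)$, so $f_{vv}(0)=0$ and $(\partial_u,\partial_v)$ is an $H$-$2$-adapted vector fields; the $H$-type hypothesis gives $f_{vvvv}(0)=\alpha f_u(0)+\beta f_{uv}(0)+\gamma f_{vvv}(0)$ at $0$. Put $\bar\xi=\partial_u$ and look for $\bar\eta=(p\,v^{2}+q\,v^{3})\partial_u+(1+r\,v)\partial_v$. Because the $\partial_u$-coefficient of $\bar\eta$ vanishes to second order in $v$ and $\bar\eta=\partial_v$ at $0$, the pair $(\bar\xi,\bar\eta)$ is adapted and $\bar\eta^2f(0)=f_{vv}(0)=0$, so it is $H$-$2$-adapted, exactly as in the constructions in Lemmas~\ref{lem:vfexist02} and \ref{lem:s3adapted}. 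Expanding $\bar\eta^4 f(0)$ in the same way gives $f_{vvvv}(0)$ plus a combination of $f_u(0)$, $f_{uv}(0)$, $f_{vvv}(0)$ with coefficients $6(pr+q)$, $8p$ and $6r$ respectively, so the equations $\gamma+6r=0$, $\beta+8p=0$, $\alpha+6(pr+q)=0$ are solved successively for $r$, $p$, $q$, which makes $\bar\eta^4 f(0)=0$.

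The only genuinely delicate point in either half is the fourth-order expansion: one must verify that, once the lower-order adaptivity conditions are imposed, the only higher-order coefficients of the coordinate change (resp.\ of $\bar\eta$) that survive at $0$ are the three displayed ones, and that the three target vectors $f_x(0),f_{xy}(0),f_{yyy}(0)$ (resp.\ $f_u(0),f_{uv}(0),f_{vvv}(0)$) are linearly independent — which is precisely the $H$-type condition. Everything else is the chain-rule bookkeeping already carried out in the $S_2$ and $B_2$ lemmas.
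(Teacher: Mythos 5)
Your proof is correct and follows essentially the same route as the paper: part (1) uses the identical coordinate change $x=u-\alpha v^4/4!-\beta v^3/4!$, $y=v-\gamma v^2/12$ (the paper writes $\delta$ for your $\gamma$), and part (2) uses the same ansatz $\bar\eta=(pv^2+qv^3)\partial_u+(1+rv)\partial_v$, whose solved coefficients $r=-\gamma/6$, $p=-\beta/8$, $q=-\alpha/6-\beta\gamma/48$ agree with the paper's $c_1,d_1$ specialized to a starting $H$-$2$-adapted coordinate system. The only (harmless) difference is that you launch part (2) directly from an $H$-$2$-adapted coordinate system instead of first building the $H$-$2$-adapted field $\tilde\eta$ from a merely adapted one via the paper's \eqref{eq:h2eta}, and your expansion $\bar\eta^4f(0)=f_{vvvv}+6(pr+q)f_u+8pf_{uv}+6rf_{vvv}$ checks out.
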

\begin{proof}
(1)
By the assumption, there exists an $H$-$2$-adapted coordinate system $(x,y)$.
Since $f$ is of $H$-type, 
$f_{yyyy}=\alpha f_x+\beta f_{xy}+\delta f_{yyy}$ holds at $0$.
Setting a coordinate system $(u,v)$ by
$$
x=u - \alpha \dfrac{v^4}{4!} - \beta \dfrac{v^3}{4!},\quad
y=v - \delta \dfrac{v^2}{12},
$$
a direct calculation shows $f_{vvvv}(0)=0$.
(2)
Let $(\xi,\eta)$ be an adapted vector fields.
We take a coordinate system $(u,v)$ satisfying
$\xi=\partial_u,\eta=\partial_v$ at $0$.
We set an $H$-$2$-adapted vector fields $(\tilde \xi,\tilde \eta)$
by $\tilde\xi=\xi$ and \eqref{eq:h2eta}
where $f_{vv}=\alpha f_u$ at $0$.
Since $f$ is of $HP$-type, $\tilde\eta^4 f
=\alpha_1 \tilde\xi f+\beta_1\tilde\xi\tilde\eta f+\delta_1\tilde\eta^3f$ at $0$ holds.
Setting 
$\bar\xi=\partial_u$ and
$$\bar \eta=c_1\partial_u+d_1\partial_v,$$
where 
$$
c_1=-\alpha v+\dfrac{1}{24}(-3\beta_1-5\alpha\delta_1)v^2+
\dfrac{1}{6}(-\alpha_1-\dfrac{1}{8} \delta_1 (\beta_1+3 \alpha \delta_1))v^3,
\quad
d_1=1-\dfrac{\delta_1v}{6},
$$
we see
$$
\bar\eta^2 f=
\bar\eta^4 f=0
$$
at $0$.
This proves the assertion.
\end{proof}
\begin{definition}
An $H$-$2$-adapted coordinate system $(u,v)$ is said to be
$H$-$4$-{\it adapted\/} if
$f_{vvvv}(0)=0$ holds.
An $H$-$2$-adapted vector fields $(\xi,\eta)$ is said to be
$H$-$4$-{\it adapted\/} if
$\eta^4f(0)=0$ holds.
\end{definition}
If there exists an $H$-$2$-adapted vector fields $(\xi,\eta)$,
then there exists an $H$-$2$-adapted coordinate system $(u,v)$.
Since the condition $\det(\xi f,\xi\eta f,\eta^3f)(0)\ne0$
does not depend on the choice of $H$-$2$-adapted vector fields,
and $(\partial_u,\partial_v)$ is an $H$-$2$-adapted vector fields,
it holds that $\det(f_u,$ $f_{uv},$ $f_{vvv})(0)\ne0$.
Then there exists an $H$-$4$-adapted coordinate system
by Lemma \ref{lem:h4vfexist}.
Thus the existence of an $H$-$4$-adapted vector fields
implies the existence of an $H$-$4$-adapted coordinate system.

\begin{lemma}
Let\/ $f$ be of\/ $H$-type.
$(1)$ Let\/ $(u,v)$ be an\/ $H$-$4$-adapted coordinate system.
An\/ $H$-$2$ adapted coordinate system\/ $(x,y)=(x(u,v),y(u,v))$ 
$($i.e., $x_v(0)=x_{vv}(0)=0$ holds\/$)$
is of\/ $H$-$4$-adapted if and only if\/
$x_{vvvv}=0$, $x_{vvv}=0$, $y_{vv}=0$ at\/ $0$.
$(2)$ Let\/ $(\xi,\eta)$ be an\/ $H$-$4$-adapted vector fields.
An\/ $H$-$2$ adapted vector fields\/ $(\tilde\xi,\tilde\eta)$ where\/
$\tilde \xi=a\xi+b\eta$,
$\tilde \eta=c\xi+d\eta$ $($i.e., $c(0)=\eta c(0)=0$ holds\/$)$
is of\/ $H$-$4$-adapted if and only if\/
$\eta^2c=0$, $\eta^3 c=0$, $\eta d=0$ at\/ $0$.
\end{lemma}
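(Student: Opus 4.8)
The plan is to prove both parts by the same kind of direct differentiation used in the analogous Lemmas \ref{lem:2adaptedcond} and \ref{lem:h22adaptedcond}, the only new feature being the bookkeeping forced by passing from order two to order four.

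For (2), write $\tilde\eta=c\xi+d\eta$ and expand $\tilde\eta^kf$ at $0$ for $k=1,\dots,4$ by the Leibniz rule. The constraints in force are $c(0)=0$ and $(\eta c)(0)=0$ (so $(\tilde\eta c)(0)=0$ and $d(0)\ne0$), coming from the $H$-$2$-adaptivity of $(\tilde\xi,\tilde\eta)$, together with $(\eta f)(0)=(\eta^2f)(0)=(\eta^4f)(0)=0$, coming from the $H$-$4$-adaptivity of $(\xi,\eta)$ (note $(\eta^3f)(0)$ is in general nonzero). Discarding every monomial carrying a factor $c$, $\tilde\eta c$, $\eta f$, $\eta^2f$ or $\eta^4f$, and recording the auxiliary identities $(\tilde\eta^2c)(0)=d(0)^2(\eta^2c)(0)$, $(\tilde\eta^3c)(0)=d(0)^3(\eta^3c)(0)+d(0)^2(\eta^2c)(0)\big((\xi c)(0)+3(\eta d)(0)\big)$ and $(\tilde\eta d)(0)=d(0)(\eta d)(0)$, the surviving terms collapse to
\[
\tilde\eta^4f(0)=\big((\tilde\eta^3c)(0)+\kappa\,(\tilde\eta^2c)(0)\big)\,\xi f(0)+4\,d(0)\,(\tilde\eta^2c)(0)\,\eta\xi f(0)+6\,d(0)^3(\eta d)(0)\,\eta^3f(0)
\]
for a suitable scalar $\kappa$, where to reach this form one replaces $\xi\eta f(0)$ by $\eta\xi f(0)$ modulo $\xi f(0)$, using that $[\eta,\xi]f(0)$ is a scalar multiple of $\xi f(0)$ because $\eta(0)$ spans $\ker df_0$. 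Since $f$ is of $H$-type, $\{\xi f(0),\eta\xi f(0),\eta^3f(0)\}$ is a basis of $\R^3$ (it spans the same space as $\{\xi f(0),\xi\eta f(0),\eta^3f(0)\}$, whose determinant is nonzero by definition of $H$-type), so $\tilde\eta^4f(0)=0$ iff all three coordinates vanish. The $\eta^3f(0)$-coordinate vanishes iff $(\eta d)(0)=0$; the $\eta\xi f(0)$-coordinate iff $(\eta^2c)(0)=0$; and once these two hold the $\xi f(0)$-coordinate reduces to $d(0)^3(\eta^3c)(0)$, which vanishes iff $(\eta^3c)(0)=0$. Conversely these three scalar conditions kill all three coordinates, which is (2).

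For (1), let $g$ denote $f$ read in the chart $(x,y)$, so that $g(x(u,v),y(u,v))=f(u,v)$ identically, with $f$ on the right read in the $(u,v)$-chart; then $H$-$4$-adaptivity of $(u,v)$ gives $f_v(0)=f_{vv}(0)=f_{vvvv}(0)=0$, while $H$-$2$-adaptivity of $(x,y)$ gives $x_v(0)=x_{vv}(0)=0$ and $g_y(0)=g_{yy}(0)=0$. Differentiating the identity in $v$ and evaluating at $0$ level by level — orders one and two merely reproduce $g_y(0)=0$ and $g_{yy}(0)=0$; order three gives $f_{vvv}(0)=g_{yyy}(0)y_v(0)^3+g_x(0)x_{vvv}(0)$ — and at order four, discarding every term with a factor $x_v$, $g_y$ or $g_{yy}$, one obtains
\[
0=f_{vvvv}(0)=g_{yyyy}(0)\,y_v(0)^4+\big(\alpha\,g_x(0)+\beta\,g_{xy}(0)+\gamma\,g_{yyy}(0)\big),
\]
where $\gamma$ is a nonzero multiple of $y_{vv}(0)$, $\beta$ is a nonzero multiple of $x_{vvv}(0)$, and $\alpha$ is a nonzero multiple of $x_{vvvv}(0)$ plus terms carrying factors $x_{vvv}(0)$ or $y_{vv}(0)$. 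Because $f$ is of $H$-type, $\{g_x(0),g_{xy}(0),g_{yyy}(0)\}$ is a basis of $\R^3$, so $g_{yyyy}(0)=0$ iff $\alpha=\beta=\gamma=0$; reading these off in the order $\gamma,\beta,\alpha$ (so that $\alpha$ collapses to a nonzero multiple of $x_{vvvv}(0)$ once $x_{vvv}(0)=y_{vv}(0)=0$) gives exactly $x_{vvvv}(0)=x_{vvv}(0)=y_{vv}(0)=0$.

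The Leibniz and chain-rule expansions are routine; the one delicate point — and the main obstacle — is that the coefficient of $\xi f(0)$ in (2) (resp. of $g_x(0)$ in (1)) is not intrinsically proportional to $\eta^3c(0)$ (resp. $x_{vvvv}(0)$) but acquires cross-terms in the other two quantities, so the equivalence must be organized as ``the other two conditions force those cross-terms to drop,'' exactly as above. The independence of the choice of target diffeomorphism is, as in the earlier lemmas, deferred to Lemma \ref{lem:target}.
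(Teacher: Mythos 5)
Your argument for (2) is essentially the paper's proof: expand $\tilde\eta^4f(0)$ under the $H$-$2$- and $H$-$4$-adaptivity constraints, decompose it in the basis $\{\xi f,\eta\xi f,\eta^3f\}$ (valid because $[\xi,\eta]f(0)$ is parallel to $\xi f(0)$ and $f$ is of $H$-type), and peel off the three conditions in the order $\eta d$, $\eta^2c$, $\eta^3c$ so that the cross-terms in the $\xi f$-coefficient drop out; your coefficient $6d^3\eta d$ of $\eta^3f(0)$ is in fact the correct one (the paper's intermediate formula for $\eta B(0)$ drops a $d\,\eta^3f$ term, harmlessly). Your direct chain-rule treatment of (1) is a correct, slightly more explicit substitute for the paper's one-line reduction to the vector-field case.
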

\begin{proof}
It is enough to show the case of vector fields.
By a direct calculation, we have $\tilde \eta f=c\,\xi f+d\, \eta f$ and
$\tilde \eta^2 f=c\,\xi A+d\, \eta B$, where
$$
A=\xi c\,\xi f+c\, \xi^2f+\xi d\,\eta f+d\,\xi \eta f,\quad
B=\eta c\,\xi f+c\, \eta\xi f+\eta d\,\eta f+d\, \eta^2 f,
$$
and we have
\begin{align*}
\tilde\eta^3f&=
c\Big(
\xi c \,A+c\,\xi A+\xi d\, B+d\,\xi B\Big)
+
d\Big(
\eta c\, A+c\,\eta A+\eta d\, B+d\,\eta B\Big),\\
\tilde\eta^4f&=c(*)+
d\eta(\tilde\eta^3f)\\
&=
c(*)+d\Bigg(
\eta c\Big(\xi c\, A+c\,\xi A+\xi d\, B+d\,\xi B\Big)
+c(*)\\
&\hspace{10mm}
+\eta d\Big(\eta c\, A+c\,\eta A+\eta d\, B+d\,\eta B\Big)\\
&\hspace{10mm}
+
d\Big(
\eta^2 c\,A+2\eta c\,\eta A+c\,\eta^2 A
+\eta^2d\,B+2\eta d\,\eta B+d\,\eta^2B
\Big)\Bigg).
\end{align*}
Since we see that
\begin{align*}
\eta B&=
\eta^2 c\,\xi f+2\eta c\,\eta\xi f
+c\,\eta^2\xi f
+
\eta^2 d\,\eta f+2\eta d\,\eta^2 f
+d\,\eta^3f,\\
\eta^2 B&=
\eta^3 c\,\xi f+3\eta^2 c\,\eta\xi f+3\eta c\,\eta^2\xi f
+c\,\eta^3\xi f
+
\eta^3 d\,\eta f+3\eta^3 d\,\eta^2 f+3\eta d\,\eta^3 f
+d\,\eta^4f,
\end{align*}
$H$-$2$-adaptivity of $(\xi,\eta)$, and that
$f$ is of $H$-type,
we have $B(0)=0$, $\eta B(0)=\eta^2 c\,\xi f+2\eta d\,\eta^2 f$
and
$$
\eta^2 B(0)=
\eta^3 c\,\xi f+3\eta^2 c\,\eta\xi f
+3\eta d\,\eta^3 f+d\,\eta^4f.
$$
Since $f$ is of $H$-type and $(\xi,\eta)$ is of $H$-$2$-adapted, it holds that
$\eta^4f(0)=0$, and
$\{\xi f,\xi\eta f,\eta^3f\}$ is linearly independent at $0$.
Moreover, $\xi\eta f=\eta\xi f+[\xi,\eta]f$ holds, and
$[\xi,\eta]f$ is parallel to $\xi f$ at $0$,
the subspace generated by $\{\xi f,\xi\eta f,\eta\xi f\}$ is the same as
that of $\{\xi f,\xi\eta f\}$ at $0$.
Thus looking at the coefficients of the vector $\eta^3f(0)$
of $\tilde\eta^4f(0)$, we have $\eta d(0)=0$.
Then $\tilde\eta^4f(0)$ with the condition $\eta d(0)=0$ divided by $d^2$ is
$$
\eta^2c(\xi c+d \xi \eta f)+d\eta^3 c\, \xi f+4 \eta^2 c\,\eta\xi f
=
\eta^2c(\xi c+d\, \xi \eta f)+d(\eta^3 c\, \xi f
+4 \eta^2 c(\xi\eta f+[\xi,\eta]f)).
$$
Looking at the coefficients of the vector $\xi\eta f(0)$,
noticing that $[\xi,\eta]f(0)$ is parallel to $\xi f(0)$ again,
we have $\eta^2c=0$.
Then $\tilde\eta^4f(0)$ with the condition $\eta d(0)=\eta^2c(0)=0$ divided by $d^3$ is
$\eta^3c(0)\,\xi f(0)$.
We have the assertion.
\end{proof}

\begin{lemma}
Let\/ $f$ be of\/ $H$-type.
Then for an\/ $H$-$4$-adapted coordinate system\/ $(u,v)$,
the condition\/ $\det(f_u,f_{vvvvv},f_{vvv})(0)\ne0$ 
does not depend on the choice of\/
$H$-$4$-adapted coordinate system and diffeomorphism on the target space.
For an\/ $H$-$4$-adapted vector fields\/ $(\xi,\eta)$,
the condition\/
$\det(\xi f,\eta^5 f,\eta^3f)(0)\ne0$ does not depend on the choice of\/
$H$-$4$-adapted vector fields and diffeomorphism on the target space.
\end{lemma}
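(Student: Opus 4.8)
The plan is to reduce, exactly as in the preceding lemmas, to the case of vector fields: a coordinate frame is in particular a pair of vector fields, the existence of an $H$-$4$-adapted vector fields implies the existence of an $H$-$4$-adapted coordinate system, and the condition $\det(f_u,f_{vvvvv},f_{vvv})(0)\ne0$ is the special case $(\xi,\eta)=(\partial_u,\partial_v)$ of $\det(\xi f,\eta^5f,\eta^3f)(0)\ne0$. So I fix an $H$-$4$-adapted coordinate system $(u,v)$ and let $(\xi,\eta)$ be an arbitrary $H$-$4$-adapted vector fields, written $\xi=a\partial_u+b\partial_v$, $\eta=c\partial_u+d\partial_v$. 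By the preceding lemma, applied with the reference frame taken to be $(\partial_u,\partial_v)$, the $H$-$4$-adaptivity of $(\xi,\eta)$ is equivalent to
$$c(0)=c_v(0)=c_{vv}(0)=c_{vvv}(0)=0,\qquad d_v(0)=0,$$
together with $a(0)d(0)\ne0$, while $a$ and $b$ are otherwise unrestricted.

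Next I would compute the three vectors at $0$ up to lower-order information. Since $(u,v)$ is $H$-$4$-adapted, $f_v(0)=f_{vv}(0)=f_{vvvv}(0)=0$, hence $\xi f(0)=a(0)f_u(0)$. Expanding $\eta^3f$ by the Leibniz rule and using $c(0)=c_v(0)=c_{vv}(0)=0$, $d_v(0)=0$, $f_v(0)=f_{vv}(0)=0$, one obtains $\eta^3f(0)=d(0)^3f_{vvv}(0)$. The core is $\eta^5f(0)$. Writing $\eta^5f$ as a sum of monomials $g_\alpha\,\partial^\alpha f$ produced by applying $\eta=c\partial_u+d\partial_v$ five times (Leibniz over the coefficient factors and the $f$-factor), observe that each application that places a $u$-derivative anywhere simultaneously manufactures a fresh factor $c$; such a factor vanishes at $0$ unless it itself absorbs a $u$-derivative --- spawning yet another $c$-factor, a recursion that cannot terminate within five applications --- or absorbs four $v$-derivatives, which by itself uses up all five applications. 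Consequently, at $0$ the only $\partial^\alpha f$ carrying a $u$-derivative that can survive is $f_u$, and among the purely $v$-differentiated ones only $f_{vvv}$ and $f_{vvvvv}$ survive ($f_v,f_{vv},f_{vvvv}$ being killed by the adaptivity of $(u,v)$), the coefficient of $f_{vvvvv}$ being $d(0)^5$. Therefore
$$\eta^5f(0)=\mu\, f_u(0)+\lambda\, f_{vvv}(0)+d(0)^5 f_{vvvvv}(0)$$
for some scalars $\mu,\lambda$.

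Finally I would perform column operations in the determinant: subtract a multiple of the first column (parallel to $f_u(0)$) from the second and third, then subtract a multiple of the third (parallel to $f_{vvv}(0)$) from the second, obtaining
$$\det(\xi f,\eta^5f,\eta^3f)(0)=a(0)\,d(0)^{8}\,\det(f_u,f_{vvvvv},f_{vvv})(0).$$
Since $a(0)d(0)\ne0$, the two conditions are equivalent, which gives the independence of the choice of $H$-$4$-adapted vector fields; independence under diffeomorphisms of the target follows as before from Lemma \ref{lem:target}. The coordinate statement is then the special case $(\xi,\eta)=(\partial_u,\partial_v)$ combined with the already established passage from $H$-$4$-adapted vector fields to $H$-$4$-adapted coordinates. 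The main obstacle is the computation of $\eta^5f(0)$: one must be sure that none of the mixed partials, above all $f_{uv}(0)$, contributes, and the delicate point is that differentiating a coefficient by $\eta$ itself creates new $c$- and $d$-factors, so the bookkeeping of which factors can be nonzero at the origin has to be carried out with care rather than by a naive degree count.
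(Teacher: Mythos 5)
Your proposal is correct and follows essentially the same route as the paper: fix an $H$-$4$-adapted coordinate system, translate the $H$-$4$-adaptivity of $(\xi,\eta)$ into $c=c_v=c_{vv}=c_{vvv}=d_v=0$ at $0$, compute $\eta^5f(0)$ modulo $f_u$ and $f_{vvv}$ with leading coefficient $d^5$ on $f_{vvvvv}$, and conclude $\det(\xi f,\eta^5f,\eta^3f)(0)=ad^8\det(f_u,f_{vvvvv},f_{vvv})(0)$, exactly as in the paper (which records the explicit coefficients $d^4c_{vvvv}$ and $10d^4d_{vv}$ where you keep unnamed scalars $\mu,\lambda$). The target-diffeomorphism invariance is delegated to the same appendix lemma, so there is nothing further to add.
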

\begin{proof}
It is enough to show the case of vector fields.
Taking an $H$-$4$-adapted coordinate system $(u,v)$, we set
$\xi=a\partial_u+b\partial_v$,
$\eta=c\partial_u+d\partial_v$.
Then since $c=c_v=c_{vv}=c_{vvv}=d_v=0$ at $0$, we have
$$
\eta^5 f(0)=
d^4(
c_{vvvv}f_u+10 d_{vv}f_{vvv}+df_{vvvvv})(0),
$$
and this shows the assertion.
For the independence of target diffeomorphisms, 
see Lemma \ref{lem:target}.
\end{proof}

Here, we give a proof of the ``only if part'' of Theorem \ref{thm:cri}
by the independence of the above conditions.
\begin{proof}
[Proof of Theorem\/ {\rm \ref{thm:cri}}, ``only if part'']
\ref{itm:cri1} We assume $f$ is an $S_2$ singularity.
Then there exist diffeomorphism-germs 
if there exist diffeomorphism-germ
$\phi_s:(\R^2,0)\to(\R^2,0)$ and $\phi_t:(\R^3,0)\to(\R^3,0)$
such that
$\phi_t\circ f\circ \phi_s^{-1}=(u,v^2,v(u^3+v^2))$ holds.
This coordinate $(u,v)$ is of $S$-$3$-adapted, and
since the condition \eqref{eq:cri1} does not depend on the choice of
coordinate system, one can calculate \eqref{eq:cri1} by $(u,v)$.
Then one can easily see that condition is satisfied.
This shows the assertion.
\ref{itm:cri2} We assume $f$ is a $B_2^\pm$ singularity.
Then there exist diffeomorphism-germs 
if there exist diffeomorphism-germ
$\phi_s:(\R^2,0)\to(\R^2,0)$ and $\phi_t:(\R^3,0)\to(\R^3,0)$
such that
$\phi_t\circ f\circ \phi_s^{-1}=(u,v^2,v(u^2\pm v^4))$ holds.
This coordinate $(u,v)$ is of $B$-$3$-adapted, and
since the condition \eqref{eq:cri2} does not depend on the choice of
coordinate system, one can calculate \eqref{eq:cri2} by $(u,v)$.
Then one can easily see that condition is satisfied.
This shows the assertion.
\ref{itm:cri3} We assume $f$ is an $H_2$ singularity.
Then there exist diffeomorphism-germs 
if there exist diffeomorphism-germ
$\phi_s:(\R^2,0)\to(\R^2,0)$ and $\phi_t:(\R^3,0)\to(\R^3,0)$
such that
$\phi_t\circ f\circ \phi_s^{-1}=(u,uv+v^5,v^3)$ holds.
This coordinate $(u,v)$ is of $H$-$4$-adapted, and
since the condition \eqref{eq:cri3} does not depend on the choice of
coordinate system, one can calculate \eqref{eq:cri3} by $(u,v)$.
Then one can easily see that condition is satisfied.
This shows the assertion.
\end{proof}

\section{Conditions in terms of coefficients of jets}
\subsection{Conditions for\/ $S_2$ and\/ $B_2$ singularities}
We set $f=\Big(u,v^2/2+b(v),a(u,v)\Big)$, where
\begin{align}
\label{eq:normalsba}
a(u,v)&=\sum_{i+j=3}^5 \dfrac{a_{ij}}{i!j!}u^iv^j\quad
(a_{30}=a_{40}=a_{50}=0)\\
\label{eq:normalsbb}
b(v)&=\sum_{j=3}^5\dfrac{b_{0i}}{i!}v^i.
\end{align}
\begin{lemma}{\rm \cite[Proposition 2.3]{fhskbk}}\label{lem:jetcondskbk}
The above map-germ\/ $f$ at\/ $0$ is an\/ $S_2$ singularity if and only if
$$
a_{21}=0,\quad
a_{31}\ne0,\quad
a_{03}\ne0.
$$
The above map-germ\/ $f$ at\/ $0$ is a\/ $B_2^\pm$ singularities if and only if
$$
a_{03}=0,\quad
a_{21}\ne0,\quad
3a_{05}a_{21}-5a_{13}^2\ne0.
$$
Moreover, the\/ $\pm$-sign of the\/ $B_2^\pm$ singularity 
is determined by the sign of\/ $3a_{05}a_{21}-5a_{13}^2$.
\end{lemma}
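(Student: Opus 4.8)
The plan is to deduce the lemma directly from Theorem \ref{thm:cri}: I would take the given chart $(u,v)$ itself — after one preliminary normalization — as the adapted, $SB$-$2$-adapted and, according to the case, $S$-$3$-adapted or $B$-$3$-adapted frame, so that each of the criteria \eqref{eq:cri1} and \eqref{eq:cri2} evaluates directly against the $5$-jet of $a$.

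First I would reparametrize the source so that the middle component becomes $v^2/2$ exactly; since $b$ has order $\ge 3$ this is a diffeomorphism-germ, and it fixes $a_{03}$, $a_{21}$, $a_{31}$, the coefficients occurring in the $S_2$ conditions. So assume $f=(u,v^2/2,a(u,v))$. Reading off the low-order jet of $a$ gives $f_u(0)=(1,0,0)$, $f_v(0)=\zv$, $f_{vv}(0)=(0,1,0)$, $f_{uv}(0)=\zv$, $f_{vvv}(0)=(0,0,a_{03})$ and $f_{uuv}(0)=(0,0,a_{21})$; hence $\ker df_0$ is generated by $\partial_v$, so $(\partial_u,\partial_v)$ is adapted, while $\xi f\times\eta^2f=f_u\times f_{vv}\ne\zv$ and $\det(f_u,f_{vv},f_{uv})(0)=0$ show that $f$ is $SB$-type and not a Whitney umbrella, and $f_{uv}(0)=\zv$ shows that $(\partial_u,\partial_v)$ is already $SB$-$2$-adapted. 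For $\phi=\det(f_u,f_v,f_{vv})$ the identity $\eta^2\phi(0)=\det(f_u,f_{vv},f_{vvv})(0)$ yields $\eta^2\phi(0)=a_{03}$, and by Lemma \ref{lem:sb2xyy} the condition $\xi^2\phi(0)\ne0$ is equivalent to $\det(f_u,f_{uuv},f_{vv})(0)\ne0$, i.e.\ to $a_{21}\ne0$. Since $\hess\phi(0)$ is diagonal with diagonal entries $\xi^2\phi(0)$, $\eta^2\phi(0)$, it follows that $f$ is of $S$-type exactly when $a_{21}=0$ and $a_{03}\ne0$, and of $B$-type exactly when $a_{03}=0$ and $a_{21}\ne0$; this already produces the first two conditions of each line of the statement.

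Then the two cases separate. If $a_{21}=0$, then $f_{uuv}(0)=\zv$, so $(\partial_u,\partial_v)$ is $S$-$3$-adapted and the criterion \eqref{eq:cri1} with $(\xi,\eta)=(\partial_u,\partial_v)$ reads $\det(f_u,f_{uuuv},f_{vv})(0)\ne0$, i.e.\ $a_{31}\ne0$; together with the preceding paragraph this is the $S_2$ part of Theorem \ref{thm:cri} transcribed into jets, and both implications follow. If $a_{03}=0$, then $f_{vvv}(0)=\zv$, so $(\partial_u,\partial_v)$ is $B$-$3$-adapted, and in \eqref{eq:cri2} with this frame the three determinants are $\det(f_u,f_{vv},f_{uvvv})(0)=a_{13}$, $\det(f_u,f_{vv},f_{uuv})(0)=a_{21}$ and $\det(f_u,f_{vv},f_{vvvvv})(0)=a_{05}$, so the left-hand side of \eqref{eq:cri2} equals $3a_{05}a_{21}-5a_{13}^2$. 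Hence $f$ is a $B_2^\pm$ singularity if and only if $a_{03}=0$, $a_{21}\ne0$ and $3a_{05}a_{21}-5a_{13}^2\ne0$, and the sign clause in the $B_2^\pm$ part of Theorem \ref{thm:cri} transfers the $\pm$-sign to that of $3a_{05}a_{21}-5a_{13}^2$.

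The part that genuinely needs attention is the preliminary normalization to $b\equiv0$ — in particular checking which jet coefficients it disturbs and which it does not — together with the stepwise verification that $(\partial_u,\partial_v)$ really is adapted to the order demanded by each definition ($SB$-$2$-, $S$-$3$- and $B$-$3$-adaptedness). Once that bookkeeping is done, every criterion of Theorem \ref{thm:cri} is simply the value of a $3\times3$ determinant two of whose columns are standard basis vectors, so the jet conditions — and, in the $B_2^\pm$ case, the sign — drop out with no further computation. I expect this organizational step, rather than any single calculation, to be the main obstacle.
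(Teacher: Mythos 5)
The decisive problem is circularity. This lemma is not proved in the paper at all: it is imported from \cite[Proposition 2.3]{fhskbk}, with only a remark that the sign refinement follows from the argument there. More to the point, the paper's own proof of Theorem \ref{thm:cri}\ref{itm:cri1} and \ref{itm:cri2} (Section 4) consists precisely of reducing $f$ to the normal form $(u,v^2/2,a(u,v))$ and then invoking Lemma \ref{lem:jetcondskbk}. Your proposal runs this implication backwards: you derive the lemma from Theorem \ref{thm:cri}, whose ``if'' direction rests on the lemma. Within the logical architecture of this paper that proves nothing. To make your route legitimate you would need an independent proof of the ``if'' part of Theorem \ref{thm:cri}\ref{itm:cri1},\ref{itm:cri2} --- e.g.\ a direct $\A$-classification of the relevant $5$-jets --- and that is exactly the content of the cited proposition. (Only the ``type $\Rightarrow$ conditions'' half could be salvaged this way, since the ``only if'' part of Theorem \ref{thm:cri} is proved in Section 2 without the lemma.)

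Independently of this, the step you dismiss as bookkeeping is where the substance sits, and for the $B_2^\pm$ line it does not go through as you describe. The source reparametrization $v=\tilde v-(b_{03}/6)\tilde v^2+\cdots$ that turns $v^2/2+b(v)$ into $\tilde v^2/2$ does leave $a_{21}$, $a_{03}$, $a_{31}$ unchanged (so the $S_2$ line is fine), but it replaces $a_{13}$ by $a_{13}-a_{12}b_{03}$ and shifts $a_{05}$ by a multiple of $a_{04}b_{03}$; hence $3a_{05}a_{21}-5a_{13}^2$ is \emph{not} preserved when $b_{03}$ and either $a_{12}$ or $a_{04}$ are nonzero. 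After your normalization, the criterion \eqref{eq:cri2} delivers the condition in the \emph{new} coefficients, and identifying it with the stated condition in the original coefficients requires a genuine argument (or the extra normalizations $a_{12}=a_{22}=0$, $b\equiv0$ under which the paper actually applies the lemma in Section 4). So even granting Theorem \ref{thm:cri}, the $B_2^\pm$ condition and its sign do not simply ``drop out with no further computation.''
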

The proof in \cite[Proposition 2.3]{fhskbk} does not contain the
coincidence of the sign of the $B_2^\pm$ singularities, however,
following their argument, one can see the assertion.
\subsection{Condition for\/ $H_2$ singularity}
We set
\begin{equation}\label{eq:start}
f=\Big(u,u v+a(u,v),b(u,v)\Big),
\end{equation}
where 
\begin{equation}\label{eq:start2}
a(u,v)=\sum_{i+j=3}^5 \dfrac{a_{ij}}{i!j!}u^iv^j,\quad
b(u,v)=
\sum_{i+j=3}^5 \dfrac{b_{ij}}{i!j!}u^iv^j.
\end{equation}
\begin{lemma}\label{lem:jetcondh2}
The map-germ\/ $f$ at\/ $0$ is an\/ $H_2$ singularity if and only if
$$b_{03}\ne0,\quad c\ne0,$$
where
\begin{align}
c=&
(4 a_{05} - 10 a_{04} a_{12}) b_{03}^2 + 
 (-5 a_{04} b_{04} - 4 a_{03} b_{05} + 10 a_{03} a_{12} b_{04} 
+ 10 a_{03} a_{04} b_{12})b_{03}\nonumber  \\
&\hspace{20mm}+ 
 a_{03} (5 b_{04}^2 - 10 a_{03} b_{04} b_{12}).
\end{align}
\end{lemma}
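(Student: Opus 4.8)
The plan is to specialise Theorem \ref{thm:cri}\ref{itm:cri3} to the map-germ \eqref{eq:start}--\eqref{eq:start2}. First I record the preliminary data, all immediate from the fact that $a$ and $b$ contain no monomial of degree $\le 2$: at the origin $f_u=(1,0,0)$, $f_v=(0,0,0)$, $f_{uv}=(0,1,0)$, $f_{vv}=(0,0,0)$, and $f_{vvv}=(0,a_{03},b_{03})$. Hence $\rank df_0=1$, the coordinate system $(u,v)$ is adapted, $f_u\times f_{vv}=0$ while $f_u\times f_{uv}\ne0$ at $0$ so that $f$ is of $HP$-type, and $(u,v)$ is in fact already $H$-$2$-adapted since $f_{vv}(0)=0$. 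Evaluating the $H$-type condition in this coordinate system,
$$
\det(f_u,f_{uv},f_{vvv})(0)=\vmt{1&0&0\\ 0&1&a_{03}\\ 0&0&b_{03}}=b_{03},
$$
so $f$ is of $H$-type if and only if $b_{03}\ne0$; this is the first condition of the statement, and $b_{03}\ne0$ is also used in what follows.

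Assume now $b_{03}\ne0$, so that $\{f_u(0),f_{uv}(0),f_{vvv}(0)\}$ is a basis of $\R^3$. Next I produce an $H$-$4$-adapted vector fields $(\xi,\eta)$ by following the construction of Lemma \ref{lem:h4vfexist}(2). Since $f_{vv}(0)=0$, the $H$-$2$-adapted field of the first step is $(\partial_u,\partial_v)$ itself, and writing the $H$-type relation $f_{vvvv}(0)=\alpha_1 f_u(0)+\beta_1 f_{uv}(0)+\delta_1 f_{vvv}(0)$ and comparing the three coordinates gives
$$
\alpha_1=0,\qquad \delta_1=\frac{b_{04}}{b_{03}},\qquad \beta_1=a_{04}-\frac{a_{03}b_{04}}{b_{03}},
$$
which is solvable precisely because $b_{03}\ne0$. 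Lemma \ref{lem:h4vfexist}(2) then produces $(\xi,\eta)=(\partial_u,\,c_1\partial_u+d_1\partial_v)$, where $c_1,d_1$ are the explicit polynomials in $v$ displayed there built from $\alpha_1,\beta_1,\delta_1$, hence rational in the $a_{ij},b_{ij}$ with a power of $b_{03}$ in the denominator.

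It remains to evaluate the criterion \eqref{eq:cri3} for this $(\xi,\eta)$; this is legitimate because that condition is independent of the choice of $H$-$4$-adapted vector fields. Here $\xi f(0)=(1,0,0)$, and since $\eta$ differs from $\partial_v$ only by a term vanishing to order $\ge2$ in $v$ with $d_1(0)=1$, the expansions of $\eta^3f(0)$ and $\eta^5f(0)$ obtained by iterating $\eta$ collapse considerably: $\eta^4f(0)=0$ by construction, $\eta^3f(0)\equiv f_{vvv}(0)$ modulo $\langle f_u(0)\rangle_{\R}$, and $\eta^5f(0)$ reduces to $f_{vvvvv}(0)$ together with correction terms carrying $\alpha_1,\beta_1,\delta_1$ and the derivatives $f_{uvv}$, $f_{vvvv}$, everything being computed modulo $\langle f_u(0)\rangle_{\R}$ (note $\langle f_{vv}(0)\rangle_{\R}=0$). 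Expanding the resulting $3\times3$ determinant along its first column and substituting the values of $\alpha_1,\beta_1,\delta_1$, one finds that $\det(\xi f,\eta^5f,\eta^3f)(0)$ equals a nonzero universal constant times $c$ divided by a power of $b_{03}$; since $b_{03}\ne0$ this is nonzero exactly when $c\ne0$, which proves the lemma. The principal obstacle is precisely this last step: the fifth-order computation is long and must be organised so that, as in the proofs of Lemmas \ref{lem:sb2xyy} and \ref{lem:btypeindep}, only the coefficients of $f_{uv}$ and of the third, fourth and fifth $v$-derivatives of $f$ are tracked, with the factor $b_{03}$ carried in a denominator until the very end, where clearing it recovers the polynomial $c$.
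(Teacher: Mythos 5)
Your argument has a fatal circularity within this paper's logical structure. You derive Lemma \ref{lem:jetcondh2} by specialising Theorem \ref{thm:cri}\ref{itm:cri3}, but the paper's proof of Theorem \ref{thm:cri}\ref{itm:cri3} (in Section 4) is itself obtained by reducing a general $H$-type germ to the normal form \eqref{eq:start}--\eqref{eq:start2} with $a_{03}=a_{12}=0$ and then invoking precisely this lemma, via the specialised condition \eqref{eq:lem:jetcondh2sp}. The lemma is the input that makes the criterion a recognition statement; it cannot also be its output. The paper's own proof is therefore necessarily independent of the criterion: it uses the $5$-determinacy of $H_2$ (\cite[Theorem 4.2.1:2]{mond}) and a chain of explicit source and target coordinate changes that kill all obstructing jet terms, reducing everything to whether the coefficient of $v^5$ in the second component vanishes; that coefficient is computed to be $c/(480\,b_{03}^2)$. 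Your route would only be admissible if Theorem \ref{thm:cri}\ref{itm:cri3} had been established by some other means (e.g.\ directly from Mond's classification together with an independent determination of the $\A$-orbits adjacent to $H_2$), and you would need to say so and supply that.

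Separately, even granting the criterion, the step that actually produces the polynomial $c$ --- the evaluation of $\det(\xi f,\eta^5f,\eta^3f)(0)$ for the $H$-$4$-adapted pair built from $\alpha_1=0$, $\beta_1=a_{04}-a_{03}b_{04}/b_{03}$, $\delta_1=b_{04}/b_{03}$ --- is asserted rather than carried out. Your preliminary identifications are correct ($(u,v)$ is $H$-$2$-adapted, $\det(f_u,f_{uv},f_{vvv})(0)=b_{03}$, and the stated values of $\alpha_1,\beta_1,\delta_1$ solve $f_{vvvv}(0)=\alpha_1f_u(0)+\beta_1f_{uv}(0)+\delta_1f_{vvv}(0)$), and the structure you predict for the answer is consistent with $c=4b_{03}(a_{05}b_{03}-a_{03}b_{05})+(-10a_{12}b_{03}-5b_{04}+10a_{03}b_{12})(a_{04}b_{03}-a_{03}b_{04})$; but since this identity is the entire content of the lemma, deferring it leaves the proof incomplete. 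You also need the ``only if'' half explicitly: if $b_{03}=0$ the germ is not of $H$-type and you must rule out $H_2$ by an argument not resting on the criterion (the paper handles this by observing that the conditions are invariantly characterised and verified on the normal form).
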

If $a_{12} = 0, a_{03} = 0$, then the condition 
$c\ne0$ is equivalent to
\begin{equation}\label{eq:lem:jetcondh2sp}
b_{03} (4 a_{05} b_{03} - 5 a_{04} b_{04})\ne0.
\end{equation}
For a proof of this lemma, we use the following notation.
For a function
$$g=\sum_{i+j=1}^k\dfrac{g_{ij}}{i!j!}u^iv^j,$$
we set
$$
\coef(g,u,v,k)=
\Big(
(g_{10},g_{01}),
(g_{20},g_{11},g_{02}),
(g_{30},g_{21},g_{12},g_{03}),
\cdots,
(\cdots,g_{0k})\Big).
$$
Furthermore, we set
\begin{align*}
\coef\Big((g_1,g_2,g_3),u,v,k\Big)
&=
\pmt{
\coef(g_1,u,v,k)\\
\coef(g_2,u,v,k)\\
\coef(g_3,u,v,k)}.
\end{align*}
In \cite[Lemma 4.2.1:1]{mond}, it is shown that there 
is only one $\A^4$-orbit over $(u,uv,v^3)$.
This implies that the term $av^4$ in $(u,uv,v^3+av^4)$
can be eliminated by an $\A$-action.
The Mather lemma is used in its proof.
The proof does not produce a specific coordinate changes, 
and the conditions are not obtained.
Our proof given in here produces concrete coordinate changes.
\begin{proof}
We remark that $H_2$ is $5$-determined (\cite[Theorem 4.2.1:2]{mond}).
We assume $b_{03}\ne0$.
By the coordinate change $v\mapsto v+(b_{12}/b_{03})u$ and
by a diffeomorphism on the target space,
eliminating the terms of $u^i$ $(i=3,4,5)$ from the second and the third
components, and the terms of $u^2v$ from the third component, we see
$$
\coef(f,u,v,3)
=
\pmt{
(1,0),(0,0,0),(0,0,0,0)\hfill\\
(0,0),(0,1,0),(0,a_{03} b_{12}^2/(2 b_{03}^2),-a_{03} b_{12}/(2 b_{03}),a_{03}/6)\\
(0,0),(0,0,0),(0,0,0,b_{03}/6)\hfill}.
$$
By the coordinate change $v\mapsto v+a_{03} b_{12}/(2 b_{03})v^2$ and
by a diffeomorphism on the target space,
eliminating the terms of $u^3,u^2v$ from the second component,
eliminating the terms of $v^3$ from the third component,
and dividing the third component by $b_{03}/6$, we have
$$
\coef(f,u,v,3)
=
\pmt{
(1,0),(0,0,0),(0,0,0,0)\hfill\\
(0,0),(0,1,0),(0,0,0,0)\hfill\\
(0,0),(0,0,0),(0,0,0,1/6)}.
$$
By a diffeomorphism on the target space, we have
\begin{align*}
&\coef(f,u,v,4)\\
=&
\pmt{
(1,0),(0,0,0),(0,0,0,0),(0,0,0,0,0)\hfill\\
(0,0),(0,1,0),(0,0,0,0),(0,0,0,0,(a_{04} b_{03}-a_{03} b_{04})/(24 b_{03}))
\hfill\\
(0,0),(0,0,0),(0,0,0,1/6),
\left(0,0,0,0,
\dfrac{-6 a_{12} b_{03}+b_{04}+6 a_{03} b_{12}}{24 b_{03}}\right)}.
\end{align*}
By taking the coordinate change
$u\mapsto u-(a_{04} b_{03}-a_{03} b_{04})/(24 b_{03}))v^3/4!$, 
the third component is unchanged, and the first and the second
components $f_1,f_2$ satisfy
\begin{align*}
&\pmt{
\coef(f_1,u,v,4)\\
\coef(f_2,u,v,4)}\\
=&
\pmt{
(1,0),(0,0,0),(0,0,0,(a_{04} b_{03}-a_{03} b_{04})/(24 b_{03}))),
(0,0,0,0,0)\\
(0,0),(0,1,0),(0,0,0,0),(0,0,0,0,0)
\hfill}.
\end{align*}
Since the term of $v^3$ of the third component is $v^3/6$,
one can eliminate the term of $v^3$ of the first component
by a diffeomorphism on the target space.
Then the second and the third components are unchanged
and the first component $f_1$ satisfies
\begin{align*}
\coef(f_1,u,v,4)
&=
(1,0),(0,0,0),(0,0,0,0),\\
&\hspace{10mm}
\left(0,0,0,0,
-\dfrac{(a_{04} b_{03}-a_{03} b_{04}) (6 a_{12} b_{03}-b_{04}-6 a_{03} b_{12})}{96 b_{03}^2}\right).
\end{align*}
By these modifications the third component is now
$v^3/6+(-6 a_{12} b_{03}+b_{04}+6 a_{03} b_{12})v^4$.
if $-6 a_{12} b_{03}+b_{04}+6 a_{03} b_{12}=0$, then the coefficient
of $v^4$ of the first component vanishes.
If $-6 a_{12} b_{03}+b_{04}+6 a_{03} b_{12}=0$, then the term
of $v^4$ of the first component can be eliminated by the target
diffeomorphism.
Let $\tilde f=(\tilde f_1,\tilde f_2,\tilde f_3)$ be the result of
the above modifications up to here.
On the other hand, we set $g$ satisfying
\begin{align}\label{eq:h2changelast}
 &\coef(g,u,v,5)\\
=&
\pmt{
(1,0),(0,0,0),(0,0,0,0),(0,0,0,0,0),
(g_{1,50},g_{1,41},g_{1,32},g_{1,23},g_{1,14},g_{1,05})\hfill\\
(0,0),(0,1,0),(0,0,0,0),(0,0,0,0,0),
(g_{2,50},g_{2,41},g_{2,32},g_{2,23},g_{2,14},g_{2,05})\hfill\\
(0,0),(0,0,0),(0,0,0,1/6),(0,0,0,0,0),
(g_{3,50},g_{3,41},g_{3,32},g_{3,23},g_{3,14},g_{3,05})}.
\nonumber
\end{align}
Then by a coordinate change on the source and target spaces,
coefficients except $g_{2,05}$ can be eliminated 
$0$ without changing other parts.
Thus $\tilde f$ is an $H_2$ singularity if and only if
the coefficient of $v^5$ in $\tilde f_2$ does not vanish.
Since the coefficient is
\begin{align*}
&\dfrac{1}{480 b_{03}^2}
\Big(
4 a_{05} b_{03}^2-5 a_{04} b_{03} (2 a_{12} b_{03}+b_{04}-2 a_{03} b_{12})\\
&\hspace{20mm}
+a_{03} (10 a_{12} b_{03} b_{04}+5 b_{04}^2-4 b_{03} b_{05}-10 a_{03} b_{04} b_{12})
\Big),
\end{align*}
this shows the assertion for the ``if part''.
The ``only if part'' can be proven by the same consideration
of the proof of Theorem {\rm \ref{thm:cri}}.
\end{proof}

\section{Proof of criteria}
\begin{proof}
[Proof of Theorem\/ {\rm \ref{thm:cri}\ref{itm:cri1}} and\/ \ref{itm:cri2}]
We assume that $f$ is of $S$-type or $B$-type and satisfies the
assumption of the theorem.
Since $f$ satisfies $\xi f\times \eta^2f(0)\ne0$,
the $2$-jet $j^2f$ is $\A$-equivalent to $(u,v^2,0)$.
Then $f$ is $\A$-equivalent to the form $(u,v^2/2,a(u,v))$ with
\eqref{eq:normalsba} and the condition $a_{12}=a_{22}=0$.
Then $(u,v)$ is of $SB$-$2$-adapted.
We assume $f$ is of $S$-type and satisfies the
assumption \ref{itm:cri1} of the theorem.
Since $f$ is of $S$-type, $a_{21}=0$, $a_{03}\ne0$ hold.
Then $(u,v)$ is of $S$-$3$-adapted.
By the assumption of the theorem, we have $a_{31}\ne0$.
By Lemma \ref{lem:jetcondskbk}, the assertion \ref{itm:cri1} is proven.
Next we assume $f$ is of $B$-type and satisfies the
assumption \ref{itm:cri3} of the theorem.
Since $f$ is of $B$-type, $a_{21}\ne0$, $a_{03}=0$ hold.
Then $(u,v)$ is of $B$-$3$-adapted.
By Lemma \ref{lem:btypeindep}, the condition \eqref{eq:b2cricoord}
can be calculated by this coordinate system.
Then the condition \eqref{eq:b2cricoord} can be calculated as
$-5a_{13}^2+3a_{21}a_{05}\ne0$.
By Lemma \ref{lem:jetcondskbk}, the assertion \ref{itm:cri2} is proven.
\end{proof}
\begin{proof}[Proof of Theorem\/ {\rm \ref{thm:cri}\ref{itm:cri3}}]
We assume that $f$ is of $S$-type or $B$-type and satisfies the
assumption \ref{itm:cri3} of the theorem.
Since $f$ satisfies $\xi f\times \eta^2f(0)=0$, $\xi f\times \xi\eta f(0)\ne0$,
the $2$-jet $j^2f$ is $\A$-equivalent to $(u,uv,0)$.
Thus we may assume that $f$ is given by the form \eqref{eq:start}
with \eqref{eq:start2}.
Then 
Then $(u,v)$ is of $H$-$2$-adapted.
Since $f$ is of $H$-type, $\det(f_u,f_{uv},f_{vvv})\ne0$ holds.
So, the coefficient of $v^3$ in $b$ does not vanish.
By a coordinate change on the targe space, one can eliminate the
term of $v^3$ in $a$.
Thus we can write the second component of $f$ as
$u(v+\tilde a_1(u,v))+a_{04}v^4+a_{05}v^5$.
By a coordinate change
$v\mapsto v+\tilde a_1(u,v)$,
the second component is changed to
$uv+a_{04}v^4/4!+a_{05}v^5/5!$.
In particular, the coefficient of $v^3$ still vanishes.
We may assume that the term of $v^3$ in $b$ is
$v^3/6$.
Let $g$ be the map with the above modifications.
Then since
$
g_u=(1,0,0),
g_{uv}=(0,1,0),
g_{vvv}=(0,0,1)
$, we see
$$g_{vvvv}=\alpha g_u+\beta g_{uv}+\delta g_{vvv}$$
holds, where
$\alpha=0$, $\beta=a_{04}$, $\delta=b_{04}$.
We set
$\xi=\partial_u$,
$$
\eta=
\left(\dfrac{a_{04}}{2}u-\dfrac{a_{04}}{8}v^2\right)\partial_u
+
\left(1-\dfrac{b_{04}}{6}v\right)\partial_v.
$$
Then $(\xi,\eta)$ is of $H$-$4$-adapted.
Noticing that the coefficient of the term $uv^2$ in the second
component of $g$ vanishes, we see
$$\eta^5f=*f_{u}+*f_{vvv}+a_{05}f_{vvvvv}-(5/4)a_{04}b_{04}f_{uv}$$
at $0$.
By the assumption of the theorem, 
$\det(f_u,f_{vvvvv},f_{vvv})\ne0$ holds.
Thus we have
$a_{05}-(5/4)a_{04}b_{04}\ne0$.
Since there are no third order terms in the second component,
$g$ is an $H_2$ singularity if and only if the condition
\eqref{eq:lem:jetcondh2sp} does not vanish in Lemma
\ref{lem:jetcondh2}.
This is equivalent to
$a_{05}-(5/4)a_{04}b_{04}\ne0$.
Thus $g$ is an $H_2$ singularity, and so is $f$.
\end{proof}

\section{Singularities of surfaces}
In this section, as an application of  our criteria, we give 
conditions for codimension two
singularities that appear in ruled surfaces and center maps of surfaces in the Euclidean space and folded surfaces
with respect to a plane.
\subsection{Singularities of ruled surfaces}
Let $\gamma:(\R,0)\to(\R^3,0)$ be a curve
and let $a_1,a_2,a_3:(\R,0)\to\R^3$ be an orthonormal
frame field along $\gamma$, namely, $|a_i|=1$ ($i=1,2,3$)
and $a_i\cdot a_j=0$ ($i\ne j$).
Let us consider a ruled surface
$$
f(u,v)=\gamma(v)+ua_1(v).
$$
If $a_1$ has a singular point at $v=v_0$, 
then $f$ is regular on the line $l:u\mapsto \gamma(v_0)+ua_1(v_0)$
or $f$ is singular on $l$.
This implies $f$ does not have $S_2$, $B_2^\pm$, $H_2$ singularities.
So we assume $a_1'\ne0$, where $'=d/dv$ in this section.
If $a_1'\ne0$, then $f$ is said to be {\it non-cylindrical\/}.
Under this assumption, 
we may assume $a_1'=a_2$ by taking a suitable parameter and
a change of $a_2,a_3$.
By a change of $u\mapsto u-\gamma'\cdot a_1'/a_1'\cdot a_1'$,
we may assume $\gamma'\cdot a_1'=0$. 
This $\gamma$ is called a {\it striction curve}.
See \cite[Section 17]{gray} for these terminology.
We define functions $\gamma_1,\gamma_2,\gamma_3$, $c_1,c_2,c_3$ by
$$
\gamma'=\gamma_1a_1+\gamma_2a_2+\gamma_3a_3,\quad
\pmt{a_1'\\ a_2'\\ a_3'}
=
\pmt{
0&c_1&c_2\\
-c_1&0&c_3\\
-c_2&-c_3&0}
\pmt{a_1\\ a_2\\ a_3}.
$$
Since we assumed $\gamma'\cdot a_1'=\gamma'\cdot a_2=0$,
and $a_1'=a_2$, 
it holds that $\gamma_2=0$, $c_1=1$, $c_2=0$ for any $v$.
We assume $0$ is a singular point of $f$.
Then $\gamma_3(0)=0$ holds.
We set $G_1$ as $G_1'=\gamma_1$ and $G_1(0)=0$.
By changing $u\mapsto u-G_1(v)$, 
we re-set
$$f(u,v)=\gamma(v)+(u-G_1(v))a_1(v).$$
\begin{theorem}
The ruled surface\/ $f$ at\/ $0$ is 
\begin{enumerate}
\renewcommand{\theenumi}{$\mathrm{\arabic{enumi}}$}
\item a Whitney umbrella if and only if\/ $\gamma_3'(0)\ne0$,
\item an\/ $S_1^\pm$ singularity if and only if\/ $\gamma_3'(0)=0$,
$\gamma_1(0)\ne0$
 and\/ $\gamma_3''(0) (2c_3(0)\gamma_1(0)+\gamma_3''(0))\ne0$. 
Moreover, the\/ $\pm$ sign is determined by the sign of\/
$\gamma_3''(0) (2c_3(0)\gamma_1(0)+\gamma_3''(0))$,
\item an\/ $S_2$ singularity if and only if\/
$\gamma_3'(0)=\gamma_3''(0)=0$,
and\/ $c_3(0)\gamma_1(0)\gamma_3'''(0)\ne0$,
 \item a\/ $B_2^\pm$ singularity if and only if\/ 
$\gamma_1(0)\ne0$, $c_3(0)=\gamma_3''(0)/2\gamma_1(0)$, $\gamma_3'(0)=0$,
$\gamma_3''(0)\ne0$  and\/
$b\ne0$, where
\begin{align*}
b=&-20 c_3'(0)^2 \gamma_1(0)^4
+ \Big(-12 c_3''(0) \gamma_3''(0)
+20 c_3'(0) \gamma_3'''(0)\Big)\gamma_1(0)^3\\
&\hspace{2mm}+
\bigg(-28 c_3'(0) \gamma_1'(0)\gamma_3''(0)
-5 \gamma_3'''(0)^2-24 \gamma_3''(0)^2
+3 \gamma_3''(0)\gamma_3''''(0)\bigg)\gamma_1(0)^2
\\
&\hspace{2mm}+2 \gamma_3''(0)  \Big(5\gamma_1'(0)\gamma_3'''(0) 
-3 \gamma_1''(0) \gamma_3''(0)\Big)\gamma_1(0)
-5\gamma_1'(0)^2\gamma_3''(0)^2 -3 \gamma_3''(0)^4.
\end{align*}
Moreover, the\/ $\pm$ sign coincides with that of\/ $-b$.
\item an\/ $H_2$ singularity if and only if\/ $h\ne0$, $\gamma_3''(0)\ne0$ and\/ $\gamma_3'(0)=\gamma_1(0)=0$, where
\begin{align*}
h=&24 c_3'(0) \gamma_3''(0)^3+ \Big(c_3(0)\gamma_3'''(0)+3 (5 c_3(0)^2+12) \gamma_1'(0)+4 \gamma_1'''(0)\Big) \gamma_3''(0)^2\\
&+\Big(-4 \gamma_1'(0)\gamma_3''''(0) -5\gamma_1''(0)\gamma_3'''(0) 
+21 c_3(0)\gamma_1'(0)\gamma_1''(0) \\
&\hspace{10mm}+24 c_3'(0)  \gamma_1'(0)^2\Big)
\gamma_3''(0)\\
&+5 \gamma_1'(0) \Big(\gamma_3'''(0)^2-4 c_3(0) \gamma_1'(0)\gamma_3'''(0) 
+3 c_3(0)^2 \gamma_1'(0)^2\Big).
\end{align*}
\end{enumerate}
\end{theorem}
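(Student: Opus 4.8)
The plan is to run the whole argument in the adapted coordinates $(u,v)$ of the given parametrization, exploiting that $f(u,v)=\gamma(v)+(u-G_1(v))a_1(v)$ is affine in $u$, so the only nonvanishing partial derivatives of $f$ are $f$, $f_u=a_1$, the $f_{v^j}$, and $f_{uv^j}=a_1^{(j)}$, all expressible through the Frenet-type system together with $\gamma'=\gamma_1a_1+\gamma_3a_3$ and the normalizations $\gamma_2=0$, $c_1=1$, $c_2=0$, $G_1(0)=0$, $\gamma_3(0)=0$. First I would record $f_u=a_1$, $f_v=(u-G_1)a_2+\gamma_3a_3$, $f_{uv}=a_2$, $f_{vv}=-(u-G_1)a_1-(\gamma_1+c_3\gamma_3)a_2+(\gamma_3'+(u-G_1)c_3)a_3$; since $f_v(0)=0$, $(\partial_u,\partial_v)$ is adapted and can play the role of $(\xi,\eta)$. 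A direct computation gives the closed form $\phi=\det(f_u,f_v,f_{vv})=c_3\big((u-G_1)^2+\gamma_3^2\big)+(u-G_1)\gamma_3'+\gamma_1\gamma_3$, which is quadratic in $u$. From $d\phi_0=(\gamma_3'(0),0)$ one reads off that $f$ is a Whitney umbrella iff $\gamma_3'(0)\ne0$, which is (1); and, assuming $\gamma_3'(0)=0$, comparing $f_u\times f_{vv}(0)$ (a nonzero multiple of $\gamma_1(0)\,a_3$) with $f_u\times f_{uv}(0)=a_1\times a_2\ne0$ shows $f$ is of $SB$-type iff $\gamma_1(0)\ne0$ and of $HP$-type iff $\gamma_1(0)=0$, which separates (2)--(4) from (5).

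Assuming $\gamma_3'(0)=0$, I would compute $\hess\phi(0)$ from the closed form: $\phi_{uu}(0)=2c_3(0)$, $\phi_{uv}(0)=\gamma_3''(0)-2c_3(0)\gamma_1(0)$, $\phi_{vv}(0)=\gamma_1(0)\big(2c_3(0)\gamma_1(0)-\gamma_3''(0)\big)$, so that $\det\hess\phi(0)=\gamma_3''(0)\big(2c_3(0)\gamma_1(0)+\gamma_3''(0)\big)$ up to the sign fixed by the orientation conventions used in the normalization. If $\gamma_1(0)\ne0$ and this is nonzero, $f$ is an $S_1^\pm$ singularity — rank two rules out $S_2,B_2,H_2$, and $\eta^2\phi(0)=\phi_{vv}(0)\ne0$ is then automatic — with the $\pm$ determined by the sign of $\det\hess\phi(0)$, giving (2). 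If $\det\hess\phi(0)=0$, then $\hess\phi(0)$ has rank one, and since in an $SB$-$2$-adapted frame the Hessian is diagonal with $\partial_v$-block equal to $\eta^2\phi$, the germ $f$ is of $S$-type iff $(\phi_{uv}(0),\phi_{vv}(0))\ne(0,0)$, which forces $\gamma_3''(0)=0$ (whence $c_3(0)\ne0$), and of $B$-type iff $(\phi_{uv}(0),\phi_{vv}(0))=(0,0)$ and $\phi_{uu}(0)\ne0$, i.e.\ $\gamma_3''(0)\ne0$ and $c_3(0)=\gamma_3''(0)/(2\gamma_1(0))$. These are precisely the non-generic parts of the hypotheses in (3) and (4).

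It remains to evaluate the refined criteria of Theorem \ref{thm:cri}. For (3): pass from $(\partial_u,\partial_v)$ to an $SB$-$2$-adapted vector fields by Lemma \ref{lem:vfexist01} (at $0$ one has $f_{uv}=-\frac1{\gamma_1(0)}f_{vv}$, so only one step is needed), upgrade to an $S$-$3$-adapted one by the construction in the proof of Lemma \ref{lem:s3adapted}, and substitute into \eqref{eq:cri1}; since only $a_1,a_2,a_2',a_2'',f_v,f_{vv},f_{vvv},f_{vvvv}$ occur and $\gamma_3'(0)=\gamma_3''(0)=0$, the determinant collapses to a nonzero multiple of $\gamma_1(0)\gamma_3'''(0)$, which with $c_3(0)\ne0$ gives (3). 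For (4): build a $B$-$3$-adapted vector fields (Lemmas \ref{lem:vfexist01} and \ref{lem:vfexist02}, or use Lemma \ref{lem:btypeindep} to rewrite \eqref{eq:cri2} through coordinate derivatives), evaluate the left-hand side of \eqref{eq:cri2} — now $f_{vvvvv}$ and $f_{uvvvv}$ enter — and impose the $B$-type relation $2c_3(0)\gamma_1(0)=\gamma_3''(0)$; the expression reduces to a nonzero multiple of $b$, and the $\pm$ sign follows from the last clause of Theorem \ref{thm:cri}\ref{itm:cri2}. For (5): with $\gamma_1(0)=0$, $\det(f_u,f_{uv},f_{vvv})(0)$ equals $\gamma_3''(0)$ up to sign, so $f$ is of $H$-type iff $\gamma_3''(0)\ne0$; then build an $H$-$2$- and an $H$-$4$-adapted vector fields by the lemmas of the $H$-section (Lemma \ref{lem:h4vfexist}) and substitute into \eqref{eq:cri3}, which reduces to a nonzero multiple of $h$.

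The main obstacle is purely computational, in cases (4) and (5): one must carry the explicitly constructed second-level adapted vector fields — polynomial in $(u,v)$ with coefficients built from the Frenet data — together with the fifth-order derivatives of $f$, so \eqref{eq:cri2} and \eqref{eq:cri3} expand into long expressions before the $B$- (resp.\ $H$-) type relation triggers the cancellations down to $b$ and $h$; I expect to verify the final simplification with computer algebra. An equivalent route of comparable length is to first reduce $f$ to Mond's normal forms $\big(u,\,v^2/2+b(v),\,a(u,v)\big)$ or $\big(u,\,uv+a(u,v),\,b(u,v)\big)$ by explicit source and target coordinate changes and then quote Lemmas \ref{lem:jetcondskbk} and \ref{lem:jetcondh2}.
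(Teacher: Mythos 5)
Your proposal is correct and follows essentially the same route as the paper: compute the derivatives of $f$ in the moving frame, read the Whitney-umbrella/$S_1^\pm$ conditions and the $S$/$B$/$H$ type dichotomy off $\phi=\det(f_u,f_v,f_{vv})$ and its Hessian, then evaluate the criteria of Theorem \ref{thm:cri} on explicitly constructed $S$-$3$-, $B$-$3$- and $H$-$4$-adapted vector fields (via Lemmas \ref{lem:vfexist01}, \ref{lem:s3adapted}, \ref{lem:vfexist02}, \ref{lem:h4vfexist}), with the final reductions to $b$ and $h$ delegated to computer algebra exactly as the paper's ``direct calculation'' does. One caveat: your own (correct) entries give $\det\hess\phi(0)=\gamma_3''(0)\bigl(2c_3(0)\gamma_1(0)-\gamma_3''(0)\bigr)$, and no orientation convention turns this into the statement's $\gamma_3''(0)\bigl(2c_3(0)\gamma_1(0)+\gamma_3''(0)\bigr)$ — this is a sign inconsistency already present in the paper (whose displayed diagonal Hessian likewise does not multiply out to its claimed determinant), and your version is the one compatible with the $B$-type condition $c_3(0)=\gamma_3''(0)/2\gamma_1(0)$ in item (4), so it should be flagged as a discrepancy in the statement rather than waved away as an orientation issue.
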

\begin{proof}
We take a coordinate system on $\R^3$ by 
$\{a_1(0),a_2(0),a_3(0)\}$ to be the fundamental vectors.
Then we see 
\begin{equation}\label{eq:ruleddiff}
f_{u}(0)=(1,0,0),\quad
f_{uv}(0)=(0,1,0),\quad
f_{vv}(0)=(0,-\gamma_1(0),\gamma_3'(0)).
\end{equation}
Thus $\det(f_u,f_{vv},f_{uv})(0)\ne0$ if and only if $\gamma_3'(0)\ne0$.
Thus we see the condition for a Whitney umbrella.
We assume $\gamma_3'(0)=0$.
Under this condition, $f_u\times f_{vv}\ne0$ if and only if $\gamma_1(0)\ne0$.
We assume $\gamma_1(0)\ne0$.
We set $\xi=\partial_u-\beta\partial_v$, $\eta=\partial_v$.
Then $(\xi,\eta)$ is an $SB$-$2$-adapted vector fields. 
We see that the Hesse matrix of
$\phi=\phi(\xi,\eta)=\det(\xi f,\eta f, \eta^2 f)$
is
\begin{equation}\label{eq:ruledhess}
\hess\phi(0)=\pmt{
\gamma_3''(0)/\gamma_1(0)&
0\\
0&\gamma_1(0)(-2c_3(0)\gamma_1(0)+\gamma_3''(0))},
\end{equation}
and
$$
\det\hess\phi(0)=\gamma_3''(0) (2c_3(0)\gamma_1(0)+\gamma_3''(0)).
$$
So, if $\gamma_3''(0) (-2c_3(0)\gamma_1(0)+\gamma_3''(0))\ne0$, then $f$ at $0$ is an
$S_1^\pm$ singularities.
We assume $\gamma_3''(0) =0$ and $-2c_3(0)\gamma_1(0)+\gamma_3''(0)\ne0$,
namely $p$ is of $S$-type.
We set $(\bar\xi,\bar\eta)$ as in Lemma \ref{lem:s3adapted},
where
$$
\alpha=0,\quad \beta=-1/\gamma_1(0),\quad \alpha_1=0,\quad \beta_1=\gamma_1'(0)/\gamma_1(0)^3.
$$
Then $(\bar\xi,\bar\eta)$ is an $S$-$3$-adapted vector fields. 
Calculating $\det(\bar\xi f,\bar\xi^3\bar\eta f,\bar\eta^2 f)(0)$, we see
it is equal to
$$\dfrac{\gamma_3'''(0)}{\gamma_1(0)^2}.$$
This shows the assertion for $S_2$ singularity.
We assume $\gamma_3'(0)=0$ and $\gamma_1(0)\ne0$.
By \eqref{eq:ruledhess},
the condition that $f$ at $0$ is of
$B$-type if and only if $\gamma_3''(0)\ne0$ and 
$c_3(0)=\gamma_3''(0)/2\gamma_1(0)$.
We assume this condition.
Moreover, we set $(\bar\xi,\bar\eta)$ as in Lemma \ref{lem:vfexist02},
where
$$
\alpha=0,\quad \beta=-1/\gamma_1(0),\quad \alpha_1=2\gamma_1(0),\quad \beta_1=\gamma_1'(0)/\gamma_1(0).
$$
Then $(\bar\xi,\bar\eta)$ is an $B$-$3$-adapted vector fields. 
By a direct calculation, we see
$$-5\det(\bar\xi f,\bar\eta^2 f,\bar\eta^3\bar\xi f)(0)^2
+
3\det(\bar\xi f,\bar\eta^2 f,\bar\eta\bar\xi^2f)(0)
 \det(\bar\xi f,\bar\eta^2f,\bar\eta^5f)(0)=\dfrac{b}{\gamma_1(0)^2}.$$
This shows the assertion for $B_2$ singularity.
By \eqref{eq:ruleddiff},
the condition of $HP$-type if and only if $\gamma_3'(0)=\gamma_1(0)=0$,
and we assume this condition.
Then $(u,v)$ is an
$H$-$2$-adapted coordinate system, and we see
we see
$\det(f_u,f_{uv},f_{vvv})(0)=\gamma_3''(0)$. Thus we assume $\gamma_3''(0)\ne0$.
Moreover, we set $(\bar\xi,\bar\eta)$ as in Lemma \ref{lem:h4vfexist},
where
\begin{align*}
\alpha=&0,\\
\alpha_1=&3\gamma_1'(0),\\
\beta_1=&-3c_3(0)\gamma_3''(0)-\gamma_1''(0)+\dfrac{\gamma_1'(0)(-3c_3(0)\gamma_1'(0)+\gamma_3'''(0))}{\gamma_3''(0)},\\
\delta_1=&\dfrac{-3c_3(0)\gamma_1'(0)+\gamma_3'''(0)}{\gamma_3''(0)}.
\end{align*}
Then $(\bar\xi,\bar\eta)$ is an $H$-$4$-adapted vector fields.
By a direct calculation, we have
$$\det(\bar\xi f,\bar\eta^5f,\bar\eta^3f)(0)=-\dfrac{h}{4\gamma_3''(0)}.$$
This shows the assertion for $H_2$ singularity.
\end{proof}

\subsection{Singularities of Euclidean center maps}
Let $f:(\R^2,0)\to(\R^3,0)$ be a frontal,
and let $\nu$ be a unit normal of $f$.
We set $\rho:(\R^2,0)\to\R$ by
$\rho=f\cdot \nu$.
The map $c$
$$
c=f-\rho \nu
$$
is called the {\it Euclidean center map}, or 
the {\it center map}.
In this section, we study singularities of codimension
up to $2$ of center map
by using our criteria.
Generally speaking, the center map is defined in the
context of affine geometry by using a transversal vector
field instead of the Euclidean normal vector.
See \cite{furuvra} for detail.
Here we stick our considerations to the Euclidean normal case
to study its singular point.
We set
$f=(u,v,k+a(u,v))$, where $k\in\R$ and
\begin{align}
\label{eq:mongecent}
a(u,v)&=\sum_{i+j=2}^6 \dfrac{a_{ij}}{i!j!}u^iv^j,
\end{align}
$a_{11}=0$.
Then $j^1c(0,0)=((1+a_{20} k)u,(1+a_{02} k)v,0)$.
So we assume $a_{02}\ne0$, $k=-1/a_{02}$ and $a_{20}\ne a_{02}$, since
we are interested in singularities which
are dealt with this paper.
We have the following theorem.
\begin{theorem}
For the center map\/ $c$ of\/ $f$ at\/ $0$,
Whitney umbrella, $B_2^\pm$ and\/ $H_2$ singularities never appear.
The map\/ $c$ at\/ $0$ is 
\begin{enumerate}
\renewcommand{\theenumi}{{\rm \arabic{enumi}}}
\item an\/ $S_1^\pm$ singularity if and only if\/
$a_{03}(-a_{12}^2+a_{03} a_{21})\ne0$. 
Moreover, the\/ $\pm$ sign is determined by the sign of\/
$-a_{12}^2+a_{03} a_{21}$,
\item an\/ $S_2$ singularity if and only if\/
$a_{03}\ne0$, $a_{21}=a_{12}^2/a_{03}$ and\/
$s\ne0$, where
$$
s=3 a_{02}^3 a_{12}^3 - a_{04} a_{12}^3 + 
 3 a_{12}^2 a_{13} a_{03} + (3 a_{02} a_{12} a_{20}^2 - 3 a_{12} a_{22}) 
a_{03}^2 + a_{31} a_{03}^3.$$
\end{enumerate}
\end{theorem}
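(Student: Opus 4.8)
The plan is to make everything explicit. Since $f$ is a graph, the unit normal, the support function $\rho$ and the center map $c$ are truncated power series whose coefficients are rational functions of $k$ and the $a_{ij}$, so the whole statement becomes a sequence of symbolic evaluations of the invariants introduced in Section~2 and in Theorem~\ref{thm:cri}. First I would write $\nu=(-a_u,-a_v,1)/W$ with $W=\sqrt{1+a_u^2+a_v^2}$, then $\rho=f\cdot\nu=(k+a-ua_u-va_v)/W$, and $c=f-\rho\nu$, keeping $a$ as in \eqref{eq:mongecent}. Substituting $k=-1/a_{02}$, a direct check gives that the linear part of $c$ is $((1-a_{20}/a_{02})u,\,0,\,0)$; hence under $a_{20}\ne a_{02}$ one has $\rank dc_0=1$ with $\partial_v$ spanning $\ker dc_0$, so $(\xi,\eta)=(\partial_u,\partial_v)$ is an adapted pair and $(u,v)$ an adapted coordinate system. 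From here only bookkeeping remains, and since the invariants in question \emph{characterize} the singularity types (Theorem~\ref{thm:cri} and the $S_1^\pm$ criteria of Section~2), the explicit evaluation gives the equivalences in both directions at once.

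Next I would read off the $2$-jet. Computing $c_u(0),c_{uv}(0),c_{vv}(0)$ one checks that $\det(c_u,c_{vv},c_{uv})(0)=0$ \emph{identically} --- so $c$ is never a Whitney umbrella --- while $c_u\times c_{vv}(0)\ne0$ precisely when $a_{03}\ne0$. When $a_{03}\ne0$, $c$ is of $SB$-type; I then set $\phi=\det(c_u,c_v,c_{vv})$ (so $d\phi_0=0$, $c$ not being a Whitney umbrella), pass to an $SB$-$2$-adapted frame by the explicit construction in Lemma~\ref{lem:vfexist01} (with $\alpha,\beta$ determined by $c_{uv}(0)$), and compute $\hess\phi(0)$. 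The expected outcome is that the $\eta\eta$-entry $\eta^2\phi(0)$ is a nonzero constant multiple of $a_{03}$ while $\det\hess\phi(0)$ is a \emph{positive} multiple of $-a_{12}^2+a_{03}a_{21}$. Plugging this into the criteria for $S_1^\pm$ recalled in Section~2 ($d\phi_0=0$, together with the sign of $\det\hess\phi(0)$, and $\eta^2\phi(0)\ne0$ in the $-$ case) yields exactly assertion~(1): $c$ is an $S_1^\pm$ singularity if and only if $a_{03}(-a_{12}^2+a_{03}a_{21})\ne0$, with the sign of $-a_{12}^2+a_{03}a_{21}$ fixing $\pm$.

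The same Hessian computation settles the ``never occur'' claims. Indeed $B$-type forces $SB$-type, hence $a_{03}\ne0$, but then $\eta^2\phi(0)\ne0$, so $c$ cannot be of $B$-type and $B_2^\pm$ never appears; and $H$-type forces $HP$-type, hence $c_u\times c_{vv}(0)=0$, i.e. $a_{03}=0$, and a direct computation in that case shows $c$ is never of $H$-type, so $H_2$ never appears. For $S_2$ I would work in the remaining case $a_{03}\ne0$, $-a_{12}^2+a_{03}a_{21}=0$ (i.e. $a_{21}=a_{12}^2/a_{03}$), where the Hessian above is degenerate with $\eta^2\phi(0)\ne0$, so $c$ is of $S$-type. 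I then build an $S$-$3$-adapted frame $(\bar\xi,\bar\eta)$ via Lemma~\ref{lem:s3adapted} --- with $\alpha,\beta,\alpha_1,\beta_1$ determined by $c_{uv}(0)$ and $c_{uuv}(0)$ --- and evaluate the test \eqref{eq:cri1}, namely $\det(\bar\xi c,\bar\xi^3\bar\eta c,\bar\eta^2 c)(0)$. The claim is that this equals a nonzero constant multiple of the polynomial $s$ in the statement, which by Theorem~\ref{thm:cri}\ref{itm:cri1} gives assertion~(2).

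The main obstacle is purely computational. The center map must be expanded to fairly high order because forming $\bar\xi^3\bar\eta c$ requires fourth-order derivatives of $c$, each bringing in fifth-order coefficients of $a$, and because the two successive adapted-frame changes introduce further rational substitutions in the $a_{ij}$; the genuinely delicate points are (i) confirming that $\det(c_u,c_{vv},c_{uv})(0)$ --- and the relevant $H$-type determinant when $a_{03}=0$ --- vanish \emph{identically} rather than merely generically, and (ii) checking that the unwieldy intermediate expression for $\det(\bar\xi c,\bar\xi^3\bar\eta c,\bar\eta^2 c)(0)$ collapses to the compact polynomial $s$. Both are finite symbolic computations, most safely carried out with a computer algebra system.
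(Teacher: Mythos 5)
Your proposal follows essentially the same route as the paper: explicit computation of $c$ showing $\det(c_u,c_{vv},c_{uv})(0)=0$ identically, the Hessian of $\phi=\det(c_u,c_v,c_{vv})$ for the $S_1^\pm$ case and for ruling out $B$-type, the explicit $S$-$3$-adapted frame from Lemma~\ref{lem:s3adapted} to evaluate \eqref{eq:cri1} (which the paper finds equals $-2(a_{20}-a_{02})s/(a_{02}^2a_{03}^2)$), and a direct check that the $H$-type determinant vanishes when $a_{03}=0$. The plan is correct; the only cosmetic discrepancy is that $\eta^2\phi(0)$ comes out as a multiple of $a_{03}^2$ rather than $a_{03}$, which does not affect the argument.
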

\begin{proof}
We see
$$
c_{uv}(0)=\left(-\dfrac{a_{21}}{a_{02}},-\dfrac{a_{12}}{a_{02}},0\right),\quad
c_{vv}(0)=\left(-\dfrac{a_{12}}{a_{02}},-\dfrac{a_{03}}{a_{02}},0\right).
$$
Thus 
$\det(c_u,c_{vv},c_{uv})(0)=0$ for any $f$.
On the other hand,
$c_u\times c_{vv}\ne0$ if and only if
$a_{03}\ne0$,
 and
 $c_u\times c_{vv}=0$, $c_u\times c_{uv}\ne0$ if and only if
$a_{03}=0, c_{12}\ne0$.
Firstly, we assume $a_{03}\ne0$.
We set $\phi=\phi(\partial_u,\partial_v)=\det(c_u,c_v,c_{vv})$.
Then we see
$$
\hess\phi(0)=\pmt{
((a_{02}-a_{20}) (2 a_{12}^2+a_{03} a_{21}))/a_{02}^2&
(3 a_{03} a_{12} (a_{02}-a_{20}))/a_{02}^2 \\
(3 a_{03} a_{12} (a_{02}-a_{20}))/a_{02}^2&(3 a_{03}^2 (a_{02}-a_{20}))/a_{02}^2},
$$
and
$$
\det\hess\phi(0)=
\dfrac{3 a_{03}^2 (a_{02}-a_{20})^2 (-a_{12}^2+a_{03} a_{21})}{a_{02}^4}.
$$
So, if $a_{03}(-a_{12}^2+a_{03} a_{21})\ne0$, then $c$ at $0$ is an
$S_1^\pm$ singularities.
We assume $-a_{12}^2+a_{03} a_{21}=0$.
Then since we assumed $a_{03}\ne0$, 
it holds that $a_{21}=a_{12}^2/a_{03}$.
Then we see
$f_{uv}=\beta f_{vv}$, where $\beta=a_{12}/a_{03}$ at $0$.
We set $\xi=\partial_u-\beta\partial_v$, $\eta=\partial_v$.
Then $(\xi,\eta)$ is an $SB$-$2$-adapted vector fields. 
Moreover, we set $(\bar\xi,\bar\eta)$ as in Lemma \ref{lem:s3adapted},
where
\begin{align*}
\alpha=&0,\ \beta=a_{12}/a_{03},\\
\alpha_1=&
\dfrac{1}{a_{03}^3 (a_{20}-a_{02})}
\Big(3 a_{02}^3 a_{12}^3 - a_{04} a_{12}^3 + 
 3 a_{12}^2 a_{13} a_{03} \\
&\hspace{30mm}+ (a_{02}^2 a_{12} a_{20} + 2 a_{02} a_{12} a_{20}^2 - 
    3 a_{12} a_{22}) a_{03}^2 + a_{31} a_{03}^3
\Big),\\
\beta_1=&
\dfrac{1}{a_{03}^3}
\Big(-3 a_{02}^3 a_{12}^2 + a_{04} a_{12}^2 - 
 2 a_{12} a_{13} a_{03} + (a_{02}^2 a_{20} - 2 a_{02} a_{20}^2 + a_{22}) a_{03}^2\Big).
\end{align*}
Then $(\bar\xi,\bar\eta)$ is an $S$-$3$-adapted vector fields. 
By a direct calculation, we have
$$\det(\bar\xi c,\bar\xi^3\bar\eta c,\bar\eta^2 c)(0)
=
\dfrac{-2 (a_{20}-a_{02})s}
{a_{02}^2 a_{03}^2}.
$$
This shows the assertion for $S_2$ singularity.
By the above arguments, if $c_u\times c_{vv}\ne0$, then $\phi_{vv}\ne0$ holds.
This shows $B_2^\pm$ singularities do not appear.
Now we assume
$c_u\times c_{vv}=0$, $c_u\times c_{uv}\ne0$, namely,
$a_{03}=0, c_{12}\ne0$.
We set
$$
\eta=
a_{12} v/(a_{02}-a_{20})\partial_u+\partial_v.
$$
Then we see
$\xi c(0)=(*,0,0)$,
$\xi\eta c(0)=(*,*,0)$ and
$\eta^3c(0)=(*,*,0)$.
This shows $\det(\xi c,\xi\eta c,\eta^3c)=0$ at $0$.
This implies $H_2$ singularity does not appear.
\end{proof}

\subsection{Singularities of folded surfaces}
Let $f_0=(u,v,a(u,v))$ be a surface ,where
\begin{align}
\label{eq:monge}
a(u,v)&=\sum_{i+j=2}^5 \dfrac{a_{ij}}{i!j!}u^iv^j,
\end{align}
$a_{11}=0$.
In this representation, the directions of the $u$ and $v$-axes
are the principal directions.
Let $\Pi_\theta\subset\R^3$ be the plane generated by
$(\cos\theta,\sin\theta,0)$ and $(0,0,1)$
passing through the origin.
Let $R_\theta:(\R^3,0)\to(\R^3,0)$ be the fold map with respect to $\Pi_\theta$ defined by
$R_\theta(x,y,z)=A_{-\theta}\circ R_y\circ A_\theta$, where
$$
A_\theta=\pmt{
\cos\theta&-\sin\theta&0\\
\sin\theta& \cos\theta&0\\
         0&          0&1},\quad
R_y(x,y,z)=(x,y^2,z).
$$
Let us consider $f=R_\theta(x,y,z)\circ f_0$.
We call $f$ the {\it folded surface of\/ $f_0$ with 
respect to\/ $\Pi_\theta$}.
By the coordinate change
$(u,v)\mapsto(u \cos\theta + v \sin\theta, v \cos\theta - u \sin\theta)$,
we see $f$ has the form
$f(u,v)=(u,v^2,f_3(u,v))$, and
\begin{align*}
&\coef(f_3,u,v,2)\\
=&\Big((0,0),
((a_{20} \cos^2\theta+a_{02} \sin^2\theta)/2,
(a_{20}-a_{02}) \cos\theta \sin\theta,\\
&\hspace{20mm}(a_{02} \cos\theta^2+a_{20} \sin\theta^2)/2)
\Big).
\end{align*}
Since we are interested in $S_2$ and $B_2^\pm$ singularities, we assume 
$a_{20}=a_{02}$ or $\theta=0$.
We set $\phi(\partial_u,\partial_v)=(f_u,f_v,f_{vv})$.
We assume $\theta=0$.
Then $\det\hess\phi(\xi,\eta)(0)=-a_{21}a_{03}$.
If $a_{21}=0$, $a_{03}\ne0$, then
$(u,v)$ is an $S$-$3$-adapted coordinate system.
Thus by Theorem \ref{thm:cri},
$f$ at $0$ is an $S_2$ singularity if and only if
$a_{31}\ne0$.
If $a_{21}\ne0$, $a_{03}=0$, then 
$(u,v)$ is a $B$-$3$-adapted coordinate system.
Thus by Theorem \ref{thm:cri} as well,
$f$ at $0$ is a $B_2^\pm$ singularity if and only if
$3 a_{05} a_{21}-5 a_{13}^2\ne0$.
We assume $a_{20}=a_{02}$.
Then $(\partial_u,\partial_v)$ is an $SB$-$2$-adapted vector fields.
Setting $\phi=\det(f_u,f_v,f_{vv})$, we have
$$
\hess\phi(0)
=\pmt{
h_{11}&0\\
0&h_{22}},$$
where
\begin{align*}
h_{11}=&-a_{21} \cos^3\theta + (2 a_{12} - a_{30}) \cos^2\theta \sin\theta - (a_{03} - 2 a_{21}) \cos\theta \sin^2\theta - a_{12} \sin^3\theta,\\
h_{22}=&a_{03} \cos^3\theta+3 a_{12} \cos^2\theta \sin\theta+3 a_{21} \cos\theta \sin^2\theta+a_{30} \sin^3\theta.
\end{align*}
We 
set $\theta_s$ one of the solutions of $h_{11}=0$,
and
set $\theta_b$ one of the solutions of $h_{22}=0$.
Since 
$h_{11}=(f_3)_{uuv}(0)$ and
$h_{22}=(f_3)_{vvv}(0)$,
the vector fields $(\partial_u,\partial_v)$ is $S$-$3$-adapted 
if $\theta=\theta_s$, and
it is $B$-$3$-adapted
if $\theta=\theta_b$.
By Theorem \ref{thm:cri},
calculating \eqref{eq:cri1}, we see the condition for
$S_2$ singularity is $r_s\ne0$, where
\begin{align*}
r_s=&-a_{31} \cos^4\theta_s + (3 a_{22} - a_{40}) \cos^3\theta_s \sin\theta_s + 
  3 (-a_{13} + a_{31}) \cos^2\theta_s \sin^2\theta_s\\
&\hspace{5mm}
 + (a_{04} - 3 a_{22}) \cos\theta_s\sin^3\theta_s + 
  a_{13} \sin^4\theta_s.
\end{align*}
Again by Theorem \ref{thm:cri},
calculating \eqref{eq:cri2}, we see the condition for
$B_2^\pm$ singularity is $r_b\ne0$, where $r_b$ is
\begin{align*}
&\Big(5 a_{13}^2 - 3 a_{05} a_{21}\Big) \cos^8\theta_b \\
&
+ \Big(6 a_{05} a_{12} - 10 a_{04} a_{13} - 15 a_{14} a_{21} + 30 a_{13} a_{22} - 
     3 a_{05} a_{30}\Big) \cos^7\theta_b \sin\theta_b \\
&+ \Big(5 a_{04}^2 - 3 a_{03} a_{05} - 30 a_{13}^2 + 30 a_{12} a_{14} + 6 a_{05} a_{21} - 
     30 a_{04} a_{22} + 45 a_{22}^2 - 30 a_{21} a_{23} \\
&\hspace{10mm}- 15 a_{14} a_{30} + 
     30 a_{13} a_{31}\Big) \cos^6\theta_b \sin^2\theta_b \\
&+ \Big(-3 a_{05} a_{12} + 
     5 (-3 a_{03} a_{14} + 6 a_{14} a_{21} - 24 a_{13} a_{22} 
+ 12 a_{12} a_{23} - 
        6 a_{23} a_{30} \\
&\hspace{10mm}+ 6 a_{04} (a_{13} - a_{31}) + 18 a_{22} a_{31} - 6 a_{21} a_{32} + 
        2 a_{13} a_{40})\Big) \cos^5\theta_b \sin^3\theta_b 
\\
&+ 
  5 \Big(9 a_{13}^2 + 6 a_{04} a_{22} - 18 a_{22}^2 - 6 a_{03} a_{23} + 12 a_{21} a_{23} - 
     20 a_{13} a_{31} + 9 a_{31}^2 \\
&\hspace{10mm}- 3 a_{12} (a_{14} - 4 a_{32})
- 6 a_{30} a_{32} - 
     2 a_{04} a_{40} + 6 a_{22} a_{40} - 3 a_{21} a_{41}\Big) \cos^4\theta_b \sin^4\theta_b \\
\end{align*}
\begin{align*}
&+ \Big(90 a_{13} a_{22} - 30 a_{12} a_{23} + 10 a_{04} a_{31} 
- 120 a_{22} a_{31} - 
     30 a_{03} a_{32} + 60 a_{21} a_{32} - 30 a_{13} a_{40} \\
&\hspace{10mm}+ 30 a_{31} a_{40} + 30 a_{12} a_{41} - 
     15 a_{30} a_{41} 
- 3 a_{21} a_{50}\Big) \cos^3\theta_b \sin^5\theta_b \\
&+ \Big(45 a_{22}^2 + 30 a_{13} a_{31} - 30 a_{31}^2 - 30 a_{12} a_{32} - 
     30 a_{22} a_{40} + 5 a_{40}^2 \\
&\hspace{10mm}- 15 a_{03} a_{41} + 30 a_{21} a_{41} + 6 a_{12} a_{50} - 
     3 a_{30} a_{50}\Big) \cos^2\theta_b \sin^6\theta_b \\
&+ \Big(30 a_{22} a_{31} - 10 a_{31} a_{40} 
- 15 a_{12} a_{41} - 3 a_{03} a_{50} + 
     6 a_{21} a_{50}\Big) \cos\theta_b \sin^7\theta_b \\
&+ \Big(5 a_{31}^2 - 3 a_{12} a_{50}\Big) \sin^8\theta_b.
\end{align*}
In \cite{brucewilkin}, generic conditions for $S_2$, $B_2$ singularities 
of folded surfaces are studied and a celebrated duality theorem is obtained.
What we gave here are the concrete conditions of those singularities.
It should be remarked that the condition itself can be
obtained by Lemma \ref{lem:jetcondskbk}, namely, by
\cite[Proposition 2.3]{fhskbk}
since the form of $f$ can easily be reduced into the
form $(u,v^2,f_3(u,v))$.
We remark that considering the resultant of 
$h_{11}=0$ and $r_s=0$ with respect to $\cot\theta_s$,
we obtain a formula written in terms of the coefficients of
the function $f_3(u,v)$.
If $f$ satisfies that the resultant vanishes,
then it implies that there exists $\theta_s$ such that
$h_{11}=r_s=0$.
Namely, a folded surface with respect to the plane $\Pi_{\theta_s}$
is not an $S_2$ singularity.
This condition have some certain geometric meaning for a surface
at an umbilic point, however, 
the resultant is too complicated to find such meaning,
in fact, it has 278 terms.
Moreover, one can also consider the resultant of
$h_{22}=0$ and $r_b=0$ with respect to  $\cot\theta_s$.
However, it is also too complicated to calculate, 
and we do not clarify the meaning.

\newcounter{lemma}
\appendix
\section{Diffeomorphism on the target space}
In this section,
it is discussed
the effect of a diffeomorphism of the target space.
Let $\Phi:(\R^3,0)\to(\R^3,0)$ be a diffeomorphism-germ.
For a vector field $\zeta_1$ on $(\R^3,0)$, we calculate
$\zeta_1\Phi(f)$.
Then we have
$$
\zeta_1\Phi(f)
=
d(\Phi(f))(\zeta_1)
=
d\Phi\circ df(\zeta_1).
$$
Here, the map $d\Phi$ can be interpreted that the map
from the point $p=(x,y,z)\in \R^3$, it produces a linear transformation
on $T_p\R^3$.
We can set $d\Phi$ as a $3\times 3$ matrix valued map
$W(x,y,z)$.
Since $df(\zeta_1)=\zeta_1 f$, it holds that
\begin{equation}\label{eq:target01}
d\Phi\circ df(\zeta_1)=W(f)\zeta_1 f.
\end{equation}
Let $f:(\R^2,0)\to(\R^3,0)$ be a map-germ satisfying $\rank df_0=1$.
A vector field $\zeta$ is said to be {\it null\/} if
$\zeta(0)$ generates the kernel of $df_0$.
\begin{lemma}\label{lem:target}
For a map-germs\/ $f:(\R^2,0)\to(\R^3,0)$ and\/
$\Phi:(\R^3,0)\to(\R^3,0)$,
the differential of\/ $\Phi\circ f$ by vector fields\/
$\zeta_1,\ldots,\zeta_5$ 
satisfy the following.
\renewcommand{\theenumi}{$(\mathrm{T}$-$\arabic{enumi})$}
\begin{enumerate}
\item\label{itm:target01}
The differential by a vector field\/ $\zeta_1$ satisfies\/
$\zeta_1(\Phi\circ f)=W(f) \zeta_1f$ at\/ $0$.
\item\label{itm:target02}
If either of\/ $\zeta_1,\zeta_2$ is null,
then 
the differential by two vector fields\/ $\zeta_1,\zeta_2$ satisfies\/
$\zeta_2\zeta_1(\Phi\circ f)=W(f) \zeta_2\zeta_1f$ at\/ $0$.
\item\label{itm:target03}
If both of\/ $\zeta_1,\zeta_3$ are null,
then 
the differential by three vector fields\/ 
$\zeta_1,\zeta_2,\zeta_3$ satisfies\/
$\zeta_3\zeta_2\zeta_1(\Phi\circ f)=
W(f) \zeta_3\zeta_2\zeta_1f$ at\/ $0$.
\item\label{itm:target04} 
If\/ $f$ is of\/ $SB$-type, and\/ $(\xi,\eta)$ is of\/ $SB$-$2$-adapted,
and
if any of\/ $\zeta_1,\zeta_2,\zeta_3$ are equal to\/ $\eta$, and
others are equal to\/ $\xi$,
then 
the differential by three vector fields\/ 
$\zeta_1,\zeta_2,\zeta_3$ satisfies\/
$\zeta_3\zeta_2\zeta_1(\Phi\circ f)=W(f) \zeta_3\zeta_2\zeta_1f$ at\/ $0$.
\item\label{itm:target05}
If\/ $f$ is of\/ $S$-type, and\/ $(\xi,\eta)$ is of\/ $S$-$3$-adapted,
and
if any of\/ $\zeta_1,\ldots,\zeta_4$ are equal to\/ $\eta$, and
others are equal to\/ $\xi$,
then 
the differential by four vector fields\/ $\zeta_1,\ldots,\zeta_4$ satisfies\/
$\zeta_4\zeta_3\zeta_2\zeta_1(\Phi\circ f)=
W(f) \zeta_4\zeta_3\zeta_2\zeta_1f$ at\/ $0$.
\item\label{itm:target06} 
If\/ $f$ is of\/ $B$-type, and\/ $(\xi,\eta)$ is of\/ $B$-$3$-adapted,
if any three of\/ $\zeta_1,\ldots,\zeta_4$ are equal to\/ $\eta$, and
the another is equal to\/ $\xi$,
then 
the differential by four vector fields\/ $\zeta_1,\ldots,\zeta_4$ satisfies\/
$\zeta_4\zeta_3\zeta_2\zeta_1(\Phi\circ f)=
W(f) \zeta_4\zeta_3\zeta_2\zeta_1f$ at\/ $0$.
\item\label{itm:target07}
If\/ $f$ is of\/ $H$-$2$-adapted, and\/ $(\xi,\eta)$ is of\/ $H$-$2$-adapted,
if all of\/ $\zeta_1,\ldots,\zeta_5$  are equal to\/ $\eta$
then 
the differential by five vector fields\/ $\zeta_1,\ldots,\zeta_5$ satisfies\/
$\zeta_5\zeta_4\zeta_3\zeta_2\zeta_1(\Phi\circ f)=
W(f) \zeta_5\zeta_4\zeta_3\zeta_2\zeta_1f$ at\/ $0$.
\end{enumerate}
\end{lemma}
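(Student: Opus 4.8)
The plan is to deduce every clause of Lemma~\ref{lem:target} from a single Fa\`a di Bruno type expansion together with a short combinatorial argument, so that the only analytic input is \eqref{eq:target01} and the facts, recorded earlier in the paper, about which low-order iterated derivatives of $f$ vanish at $0$.

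First I would fix notation and prove the expansion. For vector fields $\zeta_1,\dots,\zeta_m$ on $(\R^2,0)$ and a subset $B=\{i_1<\dots<i_r\}$ of $\{1,\dots,m\}$, write $\zeta_Bf:=\zeta_{i_r}\cdots\zeta_{i_1}f$, and let $(D^j\Phi)_p\colon(\R^3)^j\to\R^3$ denote the symmetric $j$-th derivative of $\Phi$ at $p$, so $(D^1\Phi)_p=W(p)$. By induction on $m$, starting from \eqref{eq:target01} and using only the Leibniz rule, one obtains
\begin{equation}
\zeta_m\cdots\zeta_1(\Phi\circ f)=\sum_{P}(D^{|P|}\Phi)_f\big(\zeta_{B_1}f,\dots,\zeta_{B_{|P|}}f\big),
\end{equation}
the sum being over all partitions $P=\{B_1,\dots,B_{|P|}\}$ of $\{1,\dots,m\}$: in the inductive step, differentiating a summand by $\zeta_{m+1}$ either hits the $\Phi$-factor, creating the new singleton block $\{m+1\}$ and raising the order of $\Phi$, or hits one of the factors $\zeta_{B_l}f$, enlarging $B_l$ to $B_l\cup\{m+1\}$; every partition of $\{1,\dots,m+1\}$ arises exactly once this way. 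The partition into the single block $\{1,\dots,m\}$ contributes precisely $W(f)\,\zeta_m\cdots\zeta_1f$, so it remains to show that under the hypothesis of each of (T-2)--(T-7) every partition $P$ with $|P|\ge2$ contributes $0$ at $0$; since $(D^{|P|}\Phi)_f$ is multilinear, it suffices that one of its arguments $\zeta_Bf$ vanish at $0$.

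Next I would collect the relevant vanishings at $0$, all of which are the defining conditions of the adapted objects occurring in the hypotheses: $\eta f=0$ for any null field $\eta$; $\xi\eta f=\eta\xi f=0$ for an $SB$-$2$-adapted $(\xi,\eta)$; additionally $\eta\xi^2f=\xi\eta\xi f=\xi^2\eta f=0$ for $S$-$3$-adapted; $\eta^3f=0$ for $B$-$3$-adapted; and $\eta^2f=0$ for $H$-$2$-adapted. The combinatorial point is that in a partition with $\ge2$ blocks every block is a proper subset of $\{1,\dots,m\}$, hence has size $\le m-1$, so $\zeta_Bf$ is a word of length $\le m-1$ in the prescribed pattern of $\xi$'s and $\eta$'s. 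Running through the cases: (T-1) is \eqref{eq:target01}; in (T-2) the one nontrivial partition has a singleton block at a null index; in (T-3) every nontrivial partition of $\{1,2,3\}$ has a singleton block at a null index, or (in the intended use, where all three fields are the null field $\eta$) an argument $\eta f(0)=0$; in (T-4) the block carrying the single $\eta$ has size $\le2$, so $\zeta_Bf\in\{\eta f,\ \xi\eta f,\ \eta\xi f\}$ at $0$; in (T-5) it has size $\le3$, so $\zeta_Bf$ is one of $\eta f,\xi\eta f,\eta\xi f,\xi^2\eta f,\xi\eta\xi f,\eta\xi^2f$, all $0$ by $S$-$3$-adaptivity; in (T-6), with three $\eta$'s and one $\xi$ among four fields, either the $\xi$-index is a singleton block and the three $\eta$-indices form a subpartition of a $3$-set which contains a singleton ($\eta f(0)=0$) or is a single block ($\eta^3f(0)=0$), or else the block containing the $\xi$-index also contains an $\eta$-index, giving $\xi\eta f(0)=0$ or $\eta\xi f(0)=0$; in (T-7), with five copies of $\eta$, any partition into $\ge2$ blocks contains a block of size $1$ or $2$ (as $3+3>5$), whose argument is $\eta f(0)=0$ or $\eta^2f(0)=0$ --- so only $H$-$2$-adaptivity is used here.

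The main obstacle I anticipate is bookkeeping rather than ideas: because the vector fields do not commute, one must verify that each block $B$ in the expansion produces exactly the ordered derivative $\zeta_Bf$ written above, and case (T-6) requires the small case split just indicated. Once Lemma~\ref{lem:target} is in hand, the ``does not depend on the diffeomorphism on the target space'' assertions throughout Section~2 follow immediately: each leg of the $3\times3$ determinants occurring there is one of the derivatives covered by (T-1)--(T-7), hence is multiplied by the invertible matrix $W(f)$ under a target diffeomorphism, and $\det\bigl(W(f)v_1,W(f)v_2,W(f)v_3\bigr)=\det W(f)\cdot\det(v_1,v_2,v_3)$ preserves both vanishing and sign.
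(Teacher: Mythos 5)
Your proof is in substance the same as the paper's --- an iterated Leibniz/chain-rule expansion of $\zeta_m\cdots\zeta_1(\Phi\circ f)$ followed by the observation that every term other than $W(f)\,\zeta_m\cdots\zeta_1f$ carries a factor killed at $0$ by the relevant adaptivity condition --- but packaging the expansion as a Fa\`a di Bruno sum over partitions, with each block $B=\{i_1<\dots<i_r\}$ contributing the ordered derivative $\zeta_{i_r}\cdots\zeta_{i_1}f$, is cleaner and in one place more accurate than the paper's hand-written formulas. Your order-three sum contains the term indexed by the partition $\{\{1,3\},\{2\}\}$, namely $\sum_i W(f)_{x_i}\,\zeta_2f_i\;\zeta_3\zeta_1f$, which is absent from the paper's displayed expansion \eqref{eq:target03}; this term need not vanish when only $\zeta_1,\zeta_3$ are null (take $f=(u,v^2,uv)$, $\zeta_1=\zeta_3=\partial_v$, $\zeta_2=\partial_u$, $\Phi=(x,y,z+xy)$: then $\zeta_3\zeta_2\zeta_1(\Phi\circ f)(0)=(0,0,2)$ while $W(f)\,\zeta_3\zeta_2\zeta_1f(0)=0$). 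So your parenthetical retreat in (T-3) to the case where all three fields are the null field is not merely ``the intended use'': it is forced, since (T-3) is false in the stated generality, and the all-null case (where every nontrivial partition has a singleton block with argument $\eta f(0)=0$, or is the partition into a singleton plus a pair, again containing a vanishing argument) is exactly the only case the paper ever applies it to, namely $\eta^3f$. One further small repair: in (T-6), when the block containing the $\xi$-index has size three, its argument is $\xi\eta^2f$, $\eta\xi\eta f$ or $\eta^2\xi f$, none of which is assumed to vanish under $B$-$3$-adaptivity; the term still dies because the complementary block is then a singleton $\eta$-index with argument $\eta f(0)=0$, but your stated justification (``giving $\xi\eta f(0)=0$ or $\eta\xi f(0)=0$'') covers only the size-two subcase. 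With these two clarifications the argument is complete, and your final reduction of target-invariance of the determinantal conditions to $\det\bigl(W(f)v_1,W(f)v_2,W(f)v_3\bigr)=\det W(f)\cdot\det(v_1,v_2,v_3)$ is exactly how the paper uses the lemma.
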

\begin{proof}
The assertion
\ref{itm:target01} is obvious from \eqref{eq:target01}.
Differentiating \eqref{eq:target01} by the vector fields
$\zeta_2,\zeta_3$, we have
\begin{align}
\label{eq:target02}
\zeta_2\Big(W(f)\zeta_1 f\Big)
=&
\Big(W(f)_x \zeta_2f_1+W(f)_y \zeta_2f_2+W(f)_z \zeta_2f_3\Big)\zeta_1 f
+W(f)\zeta_2\zeta_1 f,
\end{align}
\begin{align}
\label{eq:target03}
&\zeta_3\zeta_2\Big(W(f)\zeta_1 f\Big)\\
=&
\Bigg[
\Big(W(f)_{xx}\zeta_3f_1+W(f)_{xy}\zeta_3f_2+W(f)_{xz}\zeta_3f_3\Big)\zeta_2f_1
+
W(f)_x \zeta_3\zeta_2f_1\nonumber\\
&+
\Big(W(f)_{yx}\zeta_3f_1+W(f)_{yy}\zeta_3f_2+W(f)_{yz}\zeta_3f_3\Big)\zeta_2f_2
+
W(f)_y \zeta_3\zeta_2f_2\nonumber\\
&+
\Big(W(f)_{zx}\zeta_3f_1+W(f)_{zy}\zeta_3f_2+W(f)_{zz}\zeta_3f_3\Big)\zeta_2f_3
+
W(f)_z \zeta_3\zeta_2f_3\Bigg]\zeta_1f\nonumber\\
&+
\Big(W(f)_x \zeta_3f_1+W(f)_y \zeta_3f_2+W(f)_z \zeta_3f_3\Big)
\zeta_2\zeta_1 f
+W(f)\zeta_3\zeta_2\zeta_1 f\nonumber\\
=&
\Bigg(
\sum_{i,j=1}^3W(f)_{x_ix_j}\zeta_3f_i\zeta_2f_j
+\sum_{i=1}^3W(f)_{x_i}\zeta_3\zeta_2f_i
\Bigg)\zeta_1f
+
\Bigg(\sum_{i=1}^3W(f)_{x_i}\zeta_3f_i\Bigg)\zeta_2\zeta_1f\nonumber\\
&+
W(f)\zeta_3\zeta_2\zeta_1f\nonumber\\
=&
\Bigg(
\sum_{i,j=1}^3W(f)_{x_ix_j}\zeta_3f_i\ \zeta_2f_j
\Bigg)\zeta_1f
+
\sum_{i=1}^3W(f)_{x_i}
\Big(\zeta_3\zeta_2f_i\ \zeta_1f+\zeta_3f_i\ \zeta_2\zeta_1f\Big)
\nonumber\\
&
+
W(f)\zeta_3\zeta_2\zeta_1f.\label{eq:target03sigma}
\end{align}
Differentiating \eqref{eq:target03sigma} by $\zeta_4$ again,
we have
\begin{align}
\label{eq:target04}
 &\zeta_4\zeta_3\zeta_2\Big(W(f)\zeta_1 f\Big)\\
=&
\Bigg[
\sum_{i,j=1}^3\Bigg(
\sum_{k=1}^3
W(f)_{x_ix_jx_k}\zeta_4f_k\Bigg)\zeta_3f_i\ \zeta_2f_j
+
W(f)_{x_ix_j}\Big(\zeta_4\zeta_3f_i\ \zeta_2f_j
+\zeta_3f_i\ \zeta_4\zeta_2f_j\Big)
\Bigg]\zeta_1f\nonumber\\
&+
\Bigg(
\sum_{i,j=1}^3W(f)_{x_ix_j}\zeta_3f_i\ \zeta_2f_j
\Bigg)\zeta_4\zeta_1f\nonumber\\
&+
\sum_{i=1}^3\Bigg[\sum_{k=1}^3W(f)_{x_ix_k}\zeta_4f_k
\Big(\zeta_3\zeta_2f_i\ \zeta_1f+\zeta_3f_i\ \zeta_2\zeta_1f\Big)\nonumber\\
&+
W(f)_{x_i}
\Big(\zeta_4\zeta_3\zeta_2f_i\ \zeta_1f+\zeta_3\zeta_2f_i\ \zeta_4\zeta_1f
+\zeta_4\zeta_3f_i\ \zeta_2\zeta_1f+\zeta_3f_i\ \zeta_4\zeta_2\zeta_1f
\Big)\Bigg]\nonumber\\
&
+
\Bigg(
\sum_{i=1}^3W(f)_{x_i}\zeta_4f_i
\Bigg)\zeta_3\zeta_2\zeta_1f
+
W(f)\zeta_4\zeta_3\zeta_2\zeta_1f.\nonumber
\end{align}
The assertion
\ref{itm:target02} follows from \eqref{eq:target02}.
For the assertion \ref{itm:target03},
by \eqref{eq:target03}, if 
$\zeta_1,\zeta_3$ are null, then all the terms except for
$W(f)\zeta_3\zeta_2\zeta_1f$ vanish.
Thus the assertion \ref{itm:target03} follows.
For \ref{itm:target04}, 
we assume one of $\zeta_i$ is $\eta$, and others are $\xi$.
Then by \eqref{eq:target03},
all the terms except for $W(f)\zeta_3\zeta_2\zeta_1f$
vanish since such terms have
$\eta f$, $\eta\xi f$ or $\xi\eta f$, and by the assumption
$SB$-$2$-adaptivity.
For \ref{itm:target05}, 
we assume one of $\zeta_i$ is $\eta$, and others are $\xi$.
Then by \eqref{eq:target04},
all the terms except for $W(f)\zeta_4\zeta_3\zeta_2\zeta_1f$
vanish since such terms have
$\eta f$, $\eta\xi f$, $\xi\eta f$, $\xi\eta^2 f$, $\eta\xi\eta f$ 
or $\eta^2\xi f$, and by the assumption
$S$-$3$-adaptivity.
For \ref{itm:target06}, 
we assume three of $\zeta_i$ are $\eta$, and another is $\xi$.
Then by \eqref{eq:target04},
all the terms except for $W(f)\zeta_4\zeta_3\zeta_2\zeta_1f$
vanish since such terms have
$\eta f$, $\eta\xi f$, $\xi\eta f$ or $\eta^3 f$,
and by the assumption
$B$-$3$-adaptivity.
For 
\ref{itm:target07}, taking a differential of \eqref{eq:target04} by
$\eta$.
Under the assumption $H$-$2$-adaptivity,
looking for a term in \eqref{eq:target04} that is not multiplied
by more than $2$ of terms that are $0$,
we obtain
$\sum_{i=1}^3\zeta_4\zeta_3\zeta_2f_i\ \zeta_1f$,
$\sum_{i=1}^3\zeta_3f_i\ \zeta_4\zeta_2\zeta_1f$,
$\sum_{i=1}^3\zeta_4f_i\ \zeta_3\zeta_2\zeta_1f$.
Differentiating all these terms by $\eta$, one can see that they
vanish under the assumption $H$-$2$-adaptivity.
Thus all of the terms vanish except for
$W(f)\zeta_5\zeta_4\zeta_3\zeta_2\zeta_1f$.
We remark that $\zeta_5(W(f))$ vanishes since $\zeta_5f=0$ at $0$.
This shows the assertion.
\end{proof}

\chartonoff{
\section{Recognition chart}
Since the criteria we gave are complicated a little.
We give a recognition chart for the all singularities
up to codimension (codim) two appearing on surfaces.
We assume $f:(\R^2,0)\to(\R^3,0)$ be a map-germ at $0$,
and $0$ is a singular point.
The enclosed item by
\doublebox{\hspace{5mm}}
means the chart ends there,
and
the item \Ovalbox{\hspace{5mm}}
means that branch will continue thereafter.
\clearpage
\subsection{The first step}

\begin{figure}[h!]
\includegraphics[width=\linewidth]{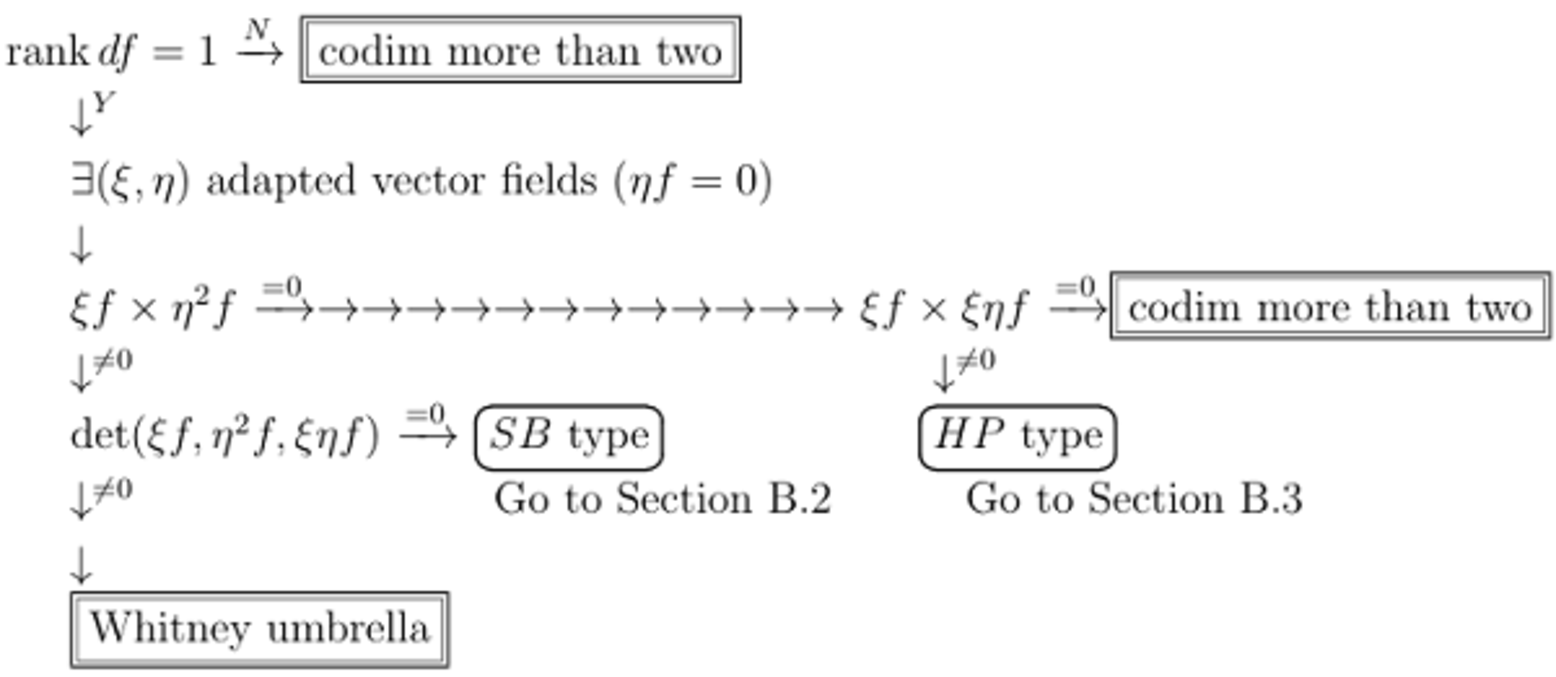}
\end{figure}

\subsection{$SB$ type}\label{sec:chartsbtype}

\begin{figure}[h!]
\includegraphics[width=\linewidth]{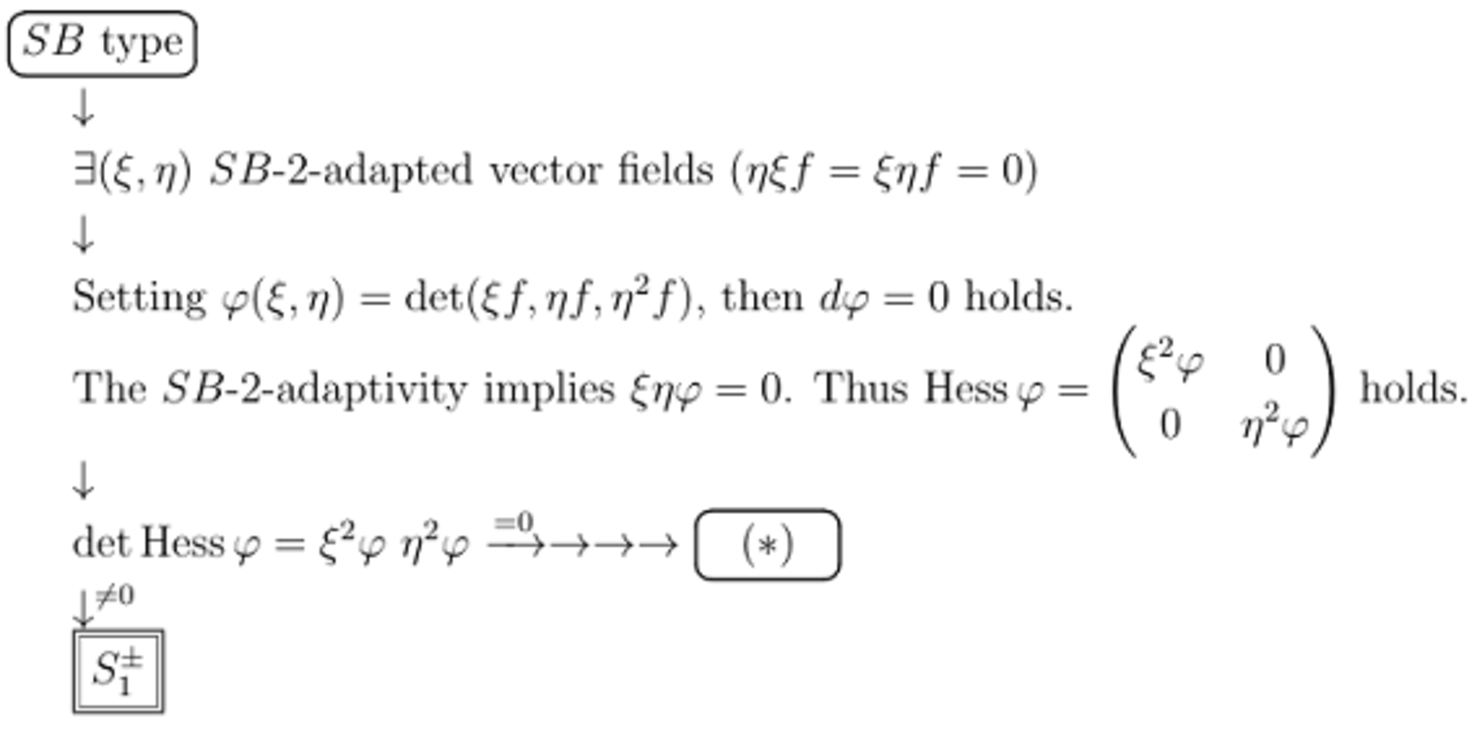}
\end{figure}
\begin{figure}[h!]
\includegraphics[width=\linewidth]{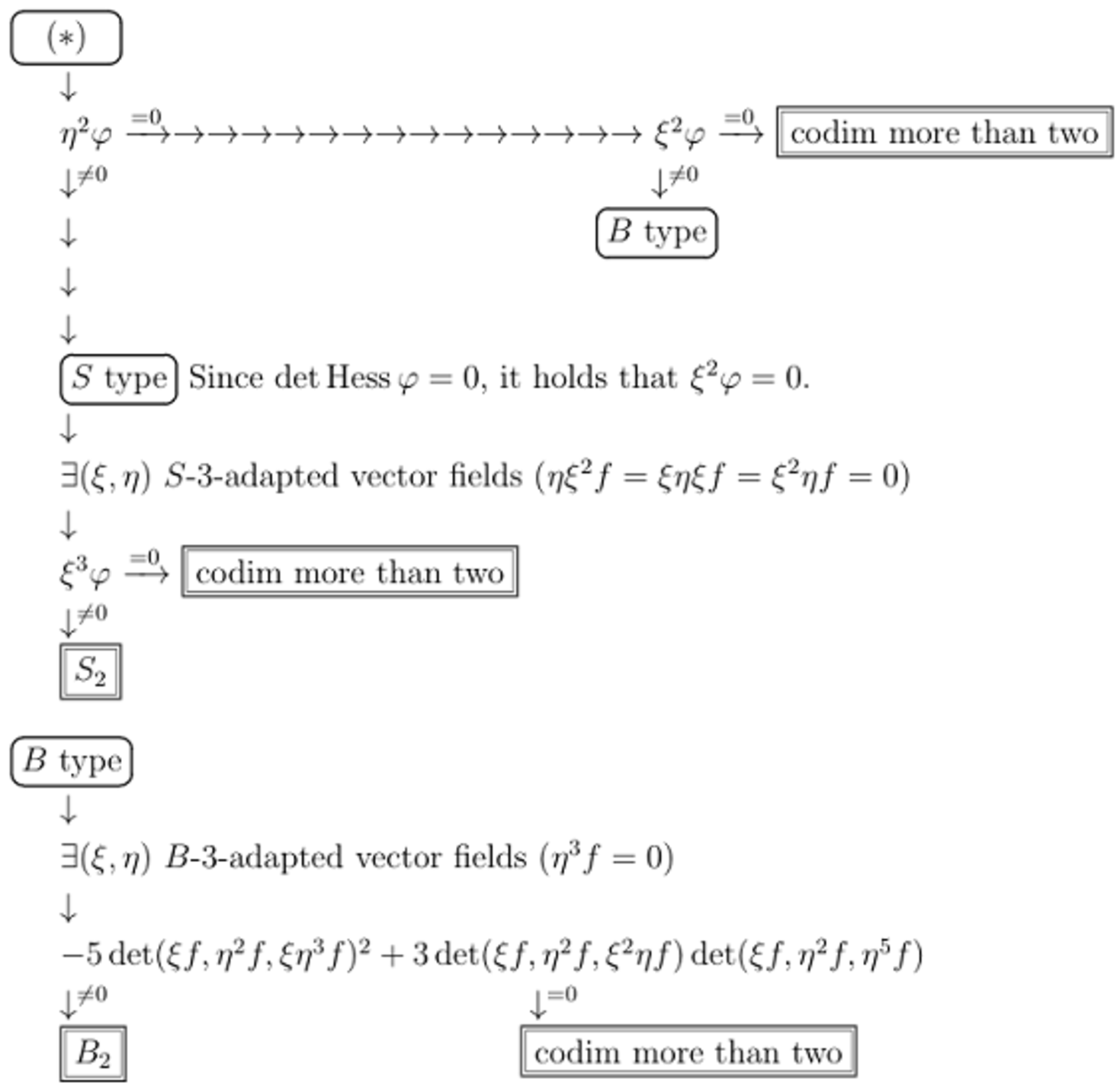}
\end{figure}
\clearpage

\subsection{$HP$ type}\label{sec:charthtype}
\begin{figure}[h!]
\includegraphics[width=.7\linewidth]{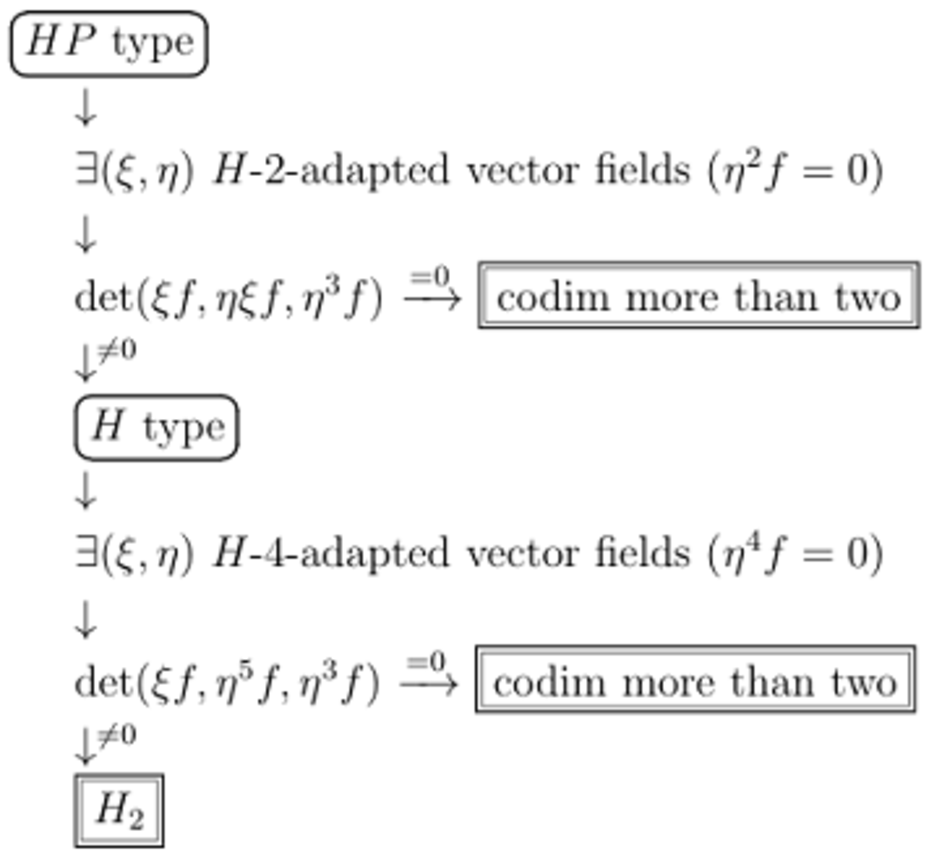}
\end{figure}

}

\section*{Acknowledgments}
Partly supported by the
JSPS KAKENHI Grant Numbers 22K03312, 22KK0034, 25K 07001.


\toukoudel{
\medskip
{\footnotesize
\begin{flushright}
\begin{tabular}{l}
(Saji and Shimada)\\
Department of Mathematics,\\
Graduate School of Science, \\
Kobe University, \\
1-1, Rokkodai, Nada, Kobe \\
657-8501, Japan\\
E-mail: {\tt saji@math.kobe-u.ac.jp}\\
E-mail: {\tt 231s010s@stu.kobe-u.ac.jp}
\end{tabular}
\end{flushright}}
}

\end{document}